\title{Edge Partitions of Complete Geometric Graphs (Part 1)} %TODO Please add
\author{Johannes Obenaus}{Department of Computer Science, Freie Universit{\"a}t Berlin,
Germany}{johannes.obenaus@fu-berlin.de}{https://orcid.org/0000-0002-0179-125X}{Supported by ERC StG 757609.}
\author{Joachim Orthaber}{Institute of Software Technology, Graz University of Technology, Austria}{joachim.orthaber@student.tugraz.at}{https://orcid.org/0000-0002-9982-0070}{}
\authorrunning{J. Obenaus and J. Orthaber} %TODO mandatory. First: Use abbreviated first/middle names. Second (only in severe cases): Use first author plus 'et al.'
\keywords{edge partition, complete geometric graph, plane spanning tree, wheel set} %TODO mandatory; please add comma-separated list of keywords
\newcommand{\dist}{\operatorname{dist}}
\newcommand{\spn}{\operatorname{span}}
\newcommand{\obe}{\operatorname{\bar{e}}}
\newcommand{\cl}{\operatorname{cl}}
\newcommand{\quasiplanar}{quasi-planar\xspace}
\newcommand{\numPartitionsKThree}{$4^{k-1} + 4^{k-2}$\xspace}
\begin{document}

\maketitle

%TODO mandatory: add short abstract of the document
\begin{abstract}
In this paper, we disprove the long-standing conjecture that any complete geometric graph on $2n$ vertices can be partitioned into $n$ plane spanning trees. Our construction is based on so-called bumpy wheel sets. We fully characterize which bumpy wheels can and in particular which \emph{cannot} be partitioned into plane spanning trees (or even into arbitrary plane \emph{subgraphs}), including a complete description of all possible partitions (into plane spanning trees).

Furthermore, we show a sufficient condition for \emph{generalized wheels} to not admit a partition into plane spanning trees, and give a complete characterization when they admit a partition into plane spanning double stars.
\end{abstract}

\section{Introduction}\label{sec:intro}

A geometric graph $G = G(P,E)$ is a drawing of a graph in the plane where the vertex set is drawn as a point set $P$ in general position (that is, no three points are collinear) and each edge of $E$ is drawn as a straight-line segment between its vertices. A geometric graph $G$ is \emph{plane} if no two of its edges \emph{cross} (that is, share a point in their relative interior).

A \emph{partition} (also called edge partition) of a graph $G$ is a set of edge-disjoint subgraphs of $G$ whose union is $G$. A subgraph of (a connected graph) $G$ is \emph{spanning} if it is connected and its vertex set is the same as the one of $G$.

In 2003, Ferran Hurtado shared the following long-standing open question, which has commonly been conjectured to have a positive answer (see~\cite{op_garden,ferran_2006}):

\begin{question}[\cite{ferran_2006}]\label{ques:hurtado}
Can every complete geometric graph on $2n$ vertices be partitioned into $n$ plane spanning trees?
\end{question}

Note that with $2n>0$ vertices, the complete graph has exactly the right number of edges to admit a partition into $n$ spanning trees, while this is not the case for $2n+1$ vertices. In the following, we consider complete geometric graphs to have $2n$ vertices unless stated otherwise. Further, we denote the complete geometric graph on a point set $P$ as $K(P)$.

\subparagraph*{Related Work.} Several approaches have been made to answer Question~\ref{ques:hurtado}.
When $P$ is in convex position it follows from a result of Bernhart and Kainen~\cite{kainen1979} that $K(P)$ can be partitioned into plane spanning paths, implying a positive answer. Further, Bose et al.~\cite{ferran_2006} gave a complete characterization of all possible partitions into plane spanning trees for convex point sets.
Similarly, when $P=W_{2n}$ is a \emph{regular wheel set} (the vertex set of a regular ($2n-1$)-gon plus its center),
Aichholzer et al.~\cite{aichholzer2017packing} showed how to partition $K(P)$ into plane spanning \emph{double stars} (trees with at most two vertices of degree $\ge 2$), and Trao et al.~\cite{trao2019edge} lately characterized all possible partitions (into arbitrary plane spanning trees).
Further, \cite{aichholzer2017packing} contains a positive answer to \Cref{ques:hurtado} 
for all point sets of (even) cardinality at most $10$, obtained by exhaustive computations.

Relaxing the requirement that the trees must be spanning, Bose et al.~\cite{ferran_2006} showed that if
for a general point set $P$, there exists an arrangement of $k$ lines in which every cell contains at least one point from $P$,
then the complete geometric graph on $P$ admits a partition into $2n-k$ plane trees, $k$ of which are plane double stars.
This result implies that 
\Cref{ques:hurtado} has a positive answer if $P$ contains $n$ pairwise crossing segments, which is the case if and only if $P$ has exactly $n$ \emph{halving lines}~\cite{pach1999halving} (a line through two points of $P$ is called \emph{halving line} if it has exactly $n-1$ points of $P$ on either side and the corresponding edge is called \emph{halving edge}).

For the related \emph{packing} problem where not all edges of the underlying graph must be covered,
Biniaz and Garc{\'{\i}}a \cite{garcia_packing} showed that $\lfloor n/3 \rfloor$ plane spanning trees can be packed in any
complete geometric graph on $n$ vertices, which is currently the best lower bound. Further, in~\cite{aichholzer2017packing} and~\cite{2019_short_packing}, packing plane spanning paths and spanning graphs with short edges, respectively, has been considered.

\subparagraph*{Contribution.} 
In this work, we provide a negative answer to \Cref{ques:hurtado} (refuting the prevalent conjecture). We even provide a negative answer to the following weaker question:

\begin{question}\label{ques:plane_subgraphs}
Can every complete geometric graph on $2n$ vertices be partitioned into $n$ plane \emph{subgraphs}?
\end{question}

Note that the problem of partitioning a geometric graph into plane subgraphs is equivalent to a classic edge coloring problem, where each edge should be assigned a color in such a way that no two edges of the same color cross (of course using as few colors as possible).
This problem received considerable attention from a variety of perspectives (see for example~\cite{DBLP:journals/jct/PawlikKKLMTW14} and references therein) and 
is also the topic of the \href{https://cgshop.ibr.cs.tu-bs.de/competition/cg-shop-2022/\#problem-description}{CG:SHOP challenge 2022} \cite{CG_challenge}.

The point sets in our construction, so-called \emph{bumpy wheel sets}, have been introduced in \cite{patrick_MA, schnider2016packing}. 
For positive odd\footnote{We require $k$ and $\ell$ to be odd for an even number of vertices in total ($k$ has to be odd anyway, since otherwise $W_{k+1}$ would not be in general position).} integers $k$ and $\ell$, the \emph{bumpy wheel} $BW_{k,\ell}$ is derived from the regular wheel $W_{k+1}$ by replacing each of the $k$ hull vertices by a \emph{group} of $\ell$ vertices as follows. 
All vertices (except the center) lie on the convex hull and the vertices within each group are $\varepsilon$-close for some (small enough) $\varepsilon > 0$. 
In particular, the convex hull of any $\frac{k+1}{2}$ consecutive groups does not contain the center vertex (see \Cref{fig:wheels_definition} for an illustration). 
Slightly abusing notation, $BW_{k, \ell}$ refers to the underlying point set as well as the complete geometric graph interchangeably. 
Note that for $\ell = 1$ we obtain a regular wheel set and for $k=1$ a point set in convex position and hence we assume $k, \ell \geq 3$ in the following.

\begin{figure}%[htb]
\centering
\begin{subfigure}{.25\textwidth}
\centering
\includegraphics[width=\textwidth,page=1]{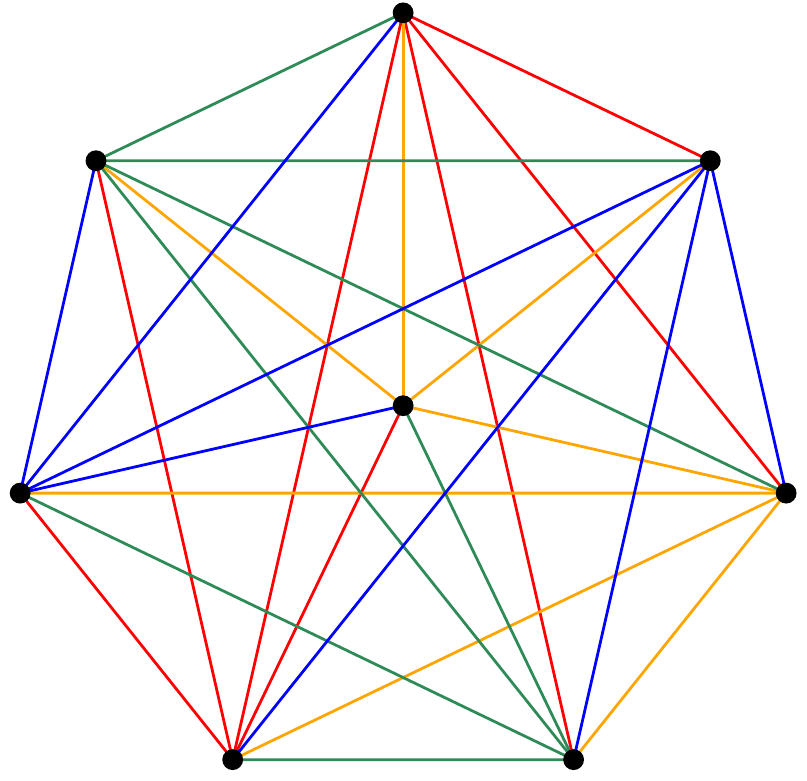}
%\caption{\centering}
\label{fig:wheels_definition_a}
\end{subfigure}\hspace{.1\textwidth}%
\begin{subfigure}{.25\textwidth}
\centering
\includegraphics[width=\textwidth,page=2]{wheels_definition}
%\caption{\centering}
\label{fig:wheels_definition_b}
\end{subfigure}\hspace{.1\textwidth}%
\begin{subfigure}{.25\textwidth}
\centering
\includegraphics[width=\textwidth,page=3]{wheels_definition}
%\caption{\centering}
\label{fig:wheels_definition_c}
\end{subfigure}\hspace{.2\textwidth}%
\caption{A partition of $W_8$ into $n = 4$ plane spanning trees (on the left), the bumpy wheel~$BW_{5,3}$ (in the middle), and the generalized wheel $GW_{[2,3,3,4,5]}$ (on the right).}
\label{fig:wheels_definition}
\end{figure}

Our motivation to study bumpy wheels stemmed from the fact that Schnider~\cite{patrick_MA} showed that $BW_{3,3}$ cannot be partitioned into plane double stars. In contrast, this is always possible for complete geometric graphs on regular wheel sets~\cite{aichholzer2017packing}, as well as complete geometric graphs on point sets admitting $n$ pairwise crossing edges~\cite{ferran_2006} (which also includes convex point sets).

Our first main contribution in this work is to fully characterize for which (odd) parameters $k$ and $\ell$,
the bumpy wheel $BW_{k,\ell}$ can and in particular \emph{cannot} be partitioned into plane spanning trees or plane \emph{subgraphs} (note that also in the subgraph setting we are only interested in partitions into $n$ subgraphs). Surprisingly, allowing arbitrary subgraphs instead of spanning trees does not help much, as it turns out that $BW_{3,5}$ is the only bumpy wheel that can be partitioned into plane subgraphs but not into plane spanning trees.

\begin{theorem}\label{thm:main_spanning_trees}
For odd parameters $k,\ell \geq 3$, the edges of $BW_{k,\ell}$ cannot be partitioned into $n = \frac{k\ell + 1}{2}$ plane spanning trees if and only if $\ell > 3$.
\end{theorem}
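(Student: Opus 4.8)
The plan is to separate the statement into its two directions and handle the "bad" direction (where no partition exists) first, since that is the heart of the disproof of Hurtado's conjecture. For $\ell > 3$, I would argue by a counting/parity obstruction localized at the groups. Fix a hypothetical partition of $BW_{k,\ell}$ into $n = \tfrac{k\ell+1}{2}$ plane spanning trees $T_1,\dots,T_n$. Each $T_i$ has $k\ell$ edges, one fewer than the number of vertices, and must connect the center $c$ to every group. The key structural observation I would exploit is that, because the $\ell$ vertices of a group are mutually $\varepsilon$-close and the center is "far", the only edges incident to a group $V_j$ that can appear in a plane spanning tree without being crossed in a forced way are: the $\binom{\ell}{2}$ intra-group edges, the edges from $c$ to $V_j$, and the edges from $V_j$ to the two neighbouring groups $V_{j-1}, V_{j+1}$ (edges to non-adjacent groups are "long" and interact badly). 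I would make this precise via a lemma: in any plane subgraph, for each group the restriction behaves like a sub-instance of a convex/wheel configuration, and then count how many edges of each type each tree can use. Summing the intra-group edges over all $k$ groups gives $k\binom{\ell}{2}$ edges that must be distributed among the $n$ trees; combined with the constraint that each tree touches every group and the center, a parity argument on $\ell$ (using that $\ell$ is odd and $>3$, so $\binom{\ell}{2}$ is odd and at least $10$) should force two edges of some tree to cross.

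For the "good" direction, $\ell = 3$, I would give an explicit construction of $n = \tfrac{3k+1}{2}$ plane spanning trees partitioning $BW_{k,3}$. The natural approach is to start from a known partition of the regular wheel $W_{k+1}$ into $\tfrac{k+1}{2}$ plane spanning trees (which exists, e.g. the double-star partition of Aichholzer et al.~\cite{aichholzer2017packing} or the path partition), and then "blow up" each hull vertex into a group of three. Blowing up one vertex into three replaces each tree edge at that vertex by up to three nearly-parallel copies and adds $3$ new intra-group edges per group; a careful bookkeeping shows the edge budget works out to exactly $n$ trees. I would describe a concrete scheme: route the intra-group edges (a path on $3$ vertices, contributing $2$ edges, plus one more) so that each group contributes its $3$ intra-group edges and its spokes to the center across the trees in a rotationally symmetric pattern, and verify planarity group-by-group using the $\varepsilon$-closeness (so that a configuration that is plane "in the limit" $\varepsilon \to 0$ is plane for small $\varepsilon$).

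The main obstacle I anticipate is the forcing lemma in the $\ell > 3$ direction: turning the intuition "edges from a group to a non-adjacent group are long and hence over-constrained" into a rigorous statement that survives the freedom of choosing which tree gets which edge. Concretely, one must rule out partitions that cleverly use a few long inter-group edges to absorb the parity defect, and this likely requires a global argument (counting across all groups simultaneously, or a discharging argument) rather than a purely local one. The constructive direction ($\ell = 3$) should be routine once the right symmetric template is found, though making the planarity verification clean — rather than a large case check — will take some care; I would aim to phrase it so that planarity of the blown-up tree follows from planarity of the original tree on $W_{k+1}$ together with a single local lemma about replacing a degree-$d$ vertex by an $\varepsilon$-close triple.
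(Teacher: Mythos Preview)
Your proposal has genuine gaps in both directions.

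\textbf{The non-existence direction ($\ell>3$).} Your structural premise is wrong. You assert that in a plane spanning tree the only usable edges at a group $V_j$ are the intra-group edges, the spokes to the center, and the edges to the two \emph{neighbouring} groups, calling edges to non-adjacent groups ``long'' and ``over-constrained''. In fact the opposite is true: the crucial edges in any plane spanning tree of $BW_{k,\ell}$ are precisely the \emph{maximal diagonal} edges connecting a group to one of its two \emph{opposite} groups (those at clockwise distance $\tfrac{k\pm1}{2}$). Every plane spanning tree in a partition must contain at least one such edge, and the paper's argument hinges on counting how these maximal diagonals and the radial edges between them can be distributed. Your lemma, as stated, would already fail on $BW_{3,3}$, which \emph{is} partitionable. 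Moreover, your parity claim is arithmetically false: $\binom{\ell}{2}$ is not odd for all odd $\ell>3$ (e.g.\ $\binom{5}{2}=10$, $\binom{9}{2}=36$), so the parity obstruction you sketch cannot distinguish $\ell=3$ from $\ell=5$.

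The paper's actual obstruction is of a different nature. One shows that exactly one tree $T_0$ has a single maximal diagonal edge (and hence uses a long run of consecutive radial edges, including $k$ of the $2k$ ``outmost'' radial edges), while every other tree has exactly two maximal diagonals forming a wedge. Trees that avoid all outmost radial edges are then forced into a very specific ``special wedge'' shape with an apex in a group not already saturated by $T_0$, and a counting of admissible apex positions (bounded by the number of \emph{inside} vertices in the relevant groups, together with a ``one maximal edge per distance'' constraint between opposite groups) gives at most $k+1+\tfrac{k-1}{2}(\ell-2)$ trees in total. This is $<\tfrac{k\ell+1}{2}$ precisely when $\ell>3$. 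None of this involves intra-group edges or parity of $\binom{\ell}{2}$.

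\textbf{The existence direction ($\ell=3$).} Your blow-up idea is not obviously workable and you have not supplied the mechanism. Starting from $\tfrac{k+1}{2}$ trees on $W_{k+1}$ and needing $\tfrac{3k+1}{2}$ trees on $BW_{k,3}$ means you must manufacture $k$ entirely new plane spanning trees, not merely perturb the old ones; a local replacement at each blown-up vertex does not account for this. The paper's construction is not a blow-up: it first pins down, for each tree, its pair of maximal diagonal edges (one of each prescribed distance $d_1,d_2,d_3$ between opposite groups) together with the radial edges in their span, and then extends outward layer by layer through the remaining distances. The existence hinges on the same wedge/apex bookkeeping that drives the impossibility proof, and the margin is exactly zero at $\ell=3$.
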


\begin{theorem}\label{thm:main_subgraphs}
For odd parameters $k,\ell \geq 3$, the edges of $BW_{k,\ell}$ cannot be partitioned into $n = \frac{k\ell + 1}{2}$ plane \emph{subgraphs} if and only if $\ell > 5$ or $(\ell = 5$ and $k > 3)$.
\end{theorem}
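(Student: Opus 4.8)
The plan is to treat the two directions of the equivalence separately, and within the ``possible'' direction to split off the single sporadic case.

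\emph{Partitions exist for the remaining cases.}
When $\ell=3$, \Cref{thm:main_spanning_trees} guarantees a partition of $BW_{k,3}$ into $n$ plane spanning trees, which is in particular a partition into $n$ plane subgraphs, so only $BW_{3,5}$ is left. For $BW_{3,5}$ (which has $16$ vertices and $n=8$) I would give an explicit partition into $8$ plane subgraphs, using the $3$-fold rotational symmetry of the point set to reduce the number of subgraphs one has to write down and check by hand. Note that by \Cref{thm:main_spanning_trees} there is no partition of $BW_{3,5}$ into plane spanning trees, so the constructed partition must contain at least one subgraph that is not a spanning tree (either disconnected or with a cycle); finding such a partition and verifying planarity and edge-disjointness is a finite but genuinely nontrivial design task, and this is exactly the place where the subgraph setting buys one something over the spanning-tree setting.

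\emph{No partition exists when $\ell\ge 7$, or $\ell=5$ and $k\ge 5$.}
Suppose for contradiction that $BW_{k,\ell}$ is partitioned into $n$ plane subgraphs $H_1,\dots,H_n$; since the edge set of $K(BW_{k,\ell})$ has size $\binom{k\ell+1}{2}=nk\ell$, on average each $H_i$ has exactly $k\ell$ edges. Label the $k\ell$ hull vertices $1,\dots,k\ell$ cyclically (in their $k$ groups), and color each hull vertex $v$ by the index $i$ for which the spoke $cv$ lies in $H_i$; writing $d_i$ for the number of vertices colored $i$, we have $\sum_i d_i=k\ell$. The key local observation is a confinement property: since the center $c$ is surrounded by all hull vertices, a non-spoke edge $uv\in H_i$ cannot ``jump over'' a spoke of $H_i$, i.e.\ the arc cut off by $uv$ on the side away from $c$ contains no interior vertex of color $i$. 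I would then concentrate on the near-diameter chords, those joining two groups at cyclic distance $\tfrac{k-1}{2}$: there are $\Theta(k\ell^2)$ of them; within a fixed group-pair many of them pairwise cross, across two vertex-disjoint group-pairs they always cross, and across two group-pairs sharing a group they cross in a controlled, monotone fashion. Combining the resulting crossing constraints with the confinement property — a subgraph carrying a near-diameter chord $e$ is forbidden the color of $e$ on the whole far arc of $e$, which spans roughly half of the groups — a double count of the incidences between near-diameter chords and forbidden colors should force strictly more than $n$ color classes once $\ell$ is large enough, contradicting the assumed partition.

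\emph{Where the difficulty lies.}
The main obstacle is the sharpness of this count. The naive bound ``each plane subgraph of $K(BW_{k,\ell})$ has at most $2k\ell$ edges'' (from convexity of the hull together with the fragmentation of a subgraph into sectors by its spokes) is a factor of about $2$ too weak to conclude anything by pure edge counting, and a pure pairwise-crossing lower bound on the chromatic number of the crossing graph falls short of $n+1$ by $\Theta(k)$ (e.g.\ for $k=3$ the $\tfrac{3(\ell-1)}{2}$ pairwise-crossing near-diameter halving edges leave two colors to spare). Hence the argument genuinely needs the interplay between the spoke-coloring and the confinement of the non-spoke edges, and it must be calibrated precisely enough that the threshold lands exactly at $\ell=7$, and at $\ell=5$ only when $k\ge 5$; in particular $BW_{3,5}$ has just enough slack to escape, so the boundary case $\ell=5$, $k\ge 5$ will likely need a sharpened instance of the count (or a short self-contained argument) rather than the generic one, and making the threshold come out exactly right is the delicate part.
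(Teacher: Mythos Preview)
Your treatment of the ``possible'' direction matches the paper's: use \Cref{thm:main_spanning_trees} for $\ell=3$ and an explicit $8$-subgraph partition for $BW_{3,5}$ (the paper finds one via ILP, see \Cref{fig:bw_35}).

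For the ``impossible'' direction, however, there is a genuine gap. Your plan is to double-count incidences between near-diameter chords and ``forbidden colors'' induced by spoke-confinement, but you never carry the count out, and it is not clear that pure crossing/confinement considerations on chords alone will ever give the exact threshold. The paper's proof rests on a structural insight you are missing: it isolates $k\ell$ specific \emph{forced diagonal edges} (maximal diagonal edges of near-maximal distance, \Cref{prop:plane_subgraphs1a}) and proves, via a careful case split $k=3$ versus $k>3$, that no plane subgraph can contain more than two of them (\Cref{prop:plane_subgraphs2}). Since $k\ell=2n-1$, this forces exactly one subgraph $D_0$ to have one forced edge and all others exactly two. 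That rigidity then yields a tight slack bound $\sum_i x_i\le \frac{\ell-1}{2}$ (\Cref{prop:plane_subgraphs2a}) on how far the paired forced-edge distances can fall below $2n-2$. The contradiction is then obtained not by counting chords but by counting \emph{radial} edges: the $2\ell-4$ inside radial edges of the two opposite groups carrying $D_0$'s forced edge must each be incident either to an apex (at most $\ell-1$ available) or to an ``additional vertex'' (at most $\frac{\ell-1}{2}$ by the slack bound), giving $\frac{3}{2}(\ell-1)<2\ell-4\iff \ell>5$.

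The boundary case $\ell=5$, $k\ge 5$ is not merely a ``sharpened instance'' of this count---at $\ell=5$ the inequality is an equality, so the paper has to argue that \emph{all} slack is spent on the two special groups, deduce that the forced edges attain their minimal distances exactly, and then run a separate parity/pairing argument on the remaining ``leftover'' forced edges in two crossing stripes (which only cross when $k\ge5$); see \Cref{thm:disprove_subgraphs_k5}. None of this emerges from a generic chord-incidence double count, and your proposal does not indicate how you would recover these steps.
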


In addition we give a complete description of all non-isomorphic partitions of $BW_{k,3}$ into plane spanning trees (as it has been done for convex point sets~\cite{ferran_2006} and regular wheel sets~\cite{trao2019edge}).

We further consider the more general case of complete geometric graphs on point sets with exactly one point inside the convex hull. To this end, let $GW_N$, for $N = [n_1, \ldots, n_k]$ and integers $n_i \geq 1$, denote the \emph{generalized wheel} with group sizes $n_i$ (in the given circular order). As before, the arrangement of the $k$ groups resembles a regular $k$-gon around the center vertex\footnote{Note that the geometric regularity of generalized wheels is not required (but eases the proofs), as we show in \Cref{app:sec:drop_geometry}.}, the vertices within each group are $\varepsilon$-close, and $k$ is odd. And for our purpose we also require $\sum_i n_i$ to be odd. In this generalized setting, we show a sufficient condition for the non-existence of a partition into plane spanning trees, and give a complete characterization for partitions into plane double stars:

\begin{restatable}{theorem}{generalizedBW}\label{thm:generalized_bw}
Let $GW_N$ be a generalized wheel with $k$ groups and $2n$ vertices. Then $GW_N$ cannot be partitioned into plane spanning trees if each family of $\frac{k-1}{2}$ consecutive groups contains (strictly) less than $n-2$ vertices.
\end{restatable}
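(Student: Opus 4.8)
The plan is to argue by contradiction. Assume $GW_N = K(P)$ is partitioned into $n$ plane spanning trees $T_1,\dots,T_n$, where $P$ consists of the center $c$ together with the $2n-1$ hull vertices, split in circular order into the $\varepsilon$-close groups $G_1,\dots,G_k$. Since every $T_i$ is spanning, hence connected, we have $\deg_{T_i}(c)\ge 1$ for all $i$, and since the $2n-1$ edges incident to $c$ are distributed among the trees, $\sum_{i=1}^n \deg_{T_i}(c) = 2n-1$. The decisive feature is that this total is exactly one less than $2n$: the trees collectively have only $n-1$ spokes to spare beyond the single edge each of them must use to reach $c$, and the group-size hypothesis will be tuned so that even this tiny surplus is insufficient.

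The first step is a structural lemma about a single plane spanning tree $T=T_i$. Removing $c$ leaves a plane forest on the hull vertices, and a short crossing argument shows that each component of $T-c$ is an \emph{arc} of consecutive hull vertices, cyclically ordered consistently with the spokes $cv_1,\dots,cv_d$ of $T$ (here $d=\deg_T(c)$): if two hull vertices of one component had a vertex of another component between them on the circle, some edge of the first component would be forced to cross a spoke. Two consequences matter. (i) At most one component of $T-c$ can have $c$ in its convex hull, since such an arc spans more than a half-turn around $c$; by the near-regular $k$-gon geometry, a component of $T-c$ has $c$ in its convex hull if and only if it meets at least $\frac{k+3}{2}$ consecutive groups, and in that case the remaining components of $T-c$ lie in a run of at most $\frac{k+1}{2}$ consecutive groups. (ii) If no component of $T-c$ has $c$ in its convex hull, then $\deg_T(c)\ge 2$, as a single component would otherwise meet all $k\ge 3$ groups and thus contain $c$; and then every component of $T-c$ meets at most $\frac{k+1}{2}$ consecutive groups. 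Here the hypothesis enters through the following translation: an arc lying in at most $\frac{k-1}{2}$ consecutive groups contains at most $n-3$ vertices, so ``few vertices'' and ``few groups'' become interchangeable, and any component with at least $n-2$ vertices must meet at least $\frac{k+1}{2}$ groups.

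The final---and by far the hardest---step is a global counting argument. The $n$ forests $T_i-c$ partition $K(P\setminus\{c\})\cong K_{2n-1}$ into plane forests with $\sum_i \deg_{T_i}(c) = 2n-1$ arc-components in total, so these components average $n$ vertices each, although the hull has only $2n-1$ vertices spread over $k$ groups. The goal is to combine this with the structural constraints above---a tree without a ``wrapping'' component needs at least two spokes, a tree with one is tightly restricted by part (i), and large arcs must straddle more than half of the groups---to conclude that the total number of spokes demanded exceeds the available budget $2n-1$, a contradiction. I would organize this as a discharging argument, charging each tree according to how many of the $k$ groups its arc-components must ``occupy'' against how many spokes it is allowed, and showing that the group-size hypothesis makes the total charge overflow by exactly the one unit of slack separating $2n-1$ from $2n$. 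Calibrating this count correctly is the main obstacle: one must handle trees with $\deg_{T_i}(c)\ge 3$, the way a single group can be split among several consecutive components, and the boundary between the ``wrapping'' and ``non-wrapping'' cases, all without conceding that unit of slack. The tightness is underscored by $BW_{k,3}$, which misses the hypothesis by exactly one vertex and does admit a partition into plane spanning trees.
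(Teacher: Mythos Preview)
Your structural setup (arc-components of $T_i-c$, wrapping vs.\ non-wrapping) is sound and rather different from the paper's, but the counting step is not merely ``hard to calibrate''---as stated it cannot close. From (i) and (ii) you get only that a tree without a wrapping component needs $\deg_{T_i}(c)\ge 2$, while a tree with one may have $\deg_{T_i}(c)=1$. If $w$ trees wrap, the resulting lower bound is $\sum_i \deg_{T_i}(c)\ge 2(n-w)+w=2n-w$, and since nothing in your list forces $w=0$ (indeed $w\ge 1$ is necessary), this is perfectly compatible with the actual value $2n-1$. No amount of discharging on these axioms alone will produce the missing unit, because $BW_{k,3}$ satisfies all of (i) and (ii) yet \emph{does} admit a partition; the group-size hypothesis must enter through a finer invariant than ``number of groups an arc meets''.

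The paper supplies two ingredients you are missing. First, by counting the $2n-1$ boundary edges it shows that \emph{exactly one} tree $T_0$ has a single maximal diagonal edge (hence $\ge \frac{k-1}{2}$ consecutive groups worth of radial edges), and every other tree has exactly two maximal diagonal edges; this replaces your vague control on $w$. Second---and this is the step your framework has no analogue of---it proves that between any fixed pair of opposite groups, at most one tree can have a maximal diagonal edge of each given distance (Proposition~2.8). The count then runs over \emph{outmost} radial edges, not total spokes: $T_0$ uses $k$ of the $2k$ outmost radial edges, so at most $k+1$ trees contain one; every remaining tree has its two maximal diagonal edges forming a ``special wedge'' whose apex is an inside vertex in one of the $\frac{k+1}{2}$ groups not saturated by $T_0$, and Proposition~2.8 collapses the two opposite groups among those to the contribution of one. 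This yields at most $k+1+\sum_{i\in I_j}(n_i-2)$ trees, which is $<n$ precisely when $\sum_{i\in I_j}n_i<n-2$. To repair your approach you would need a substitute for the distance argument---some reason why distinct non-wrapping trees cannot all place their short arcs in the same narrow wedge---and that is exactly the content of Proposition~2.8.
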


\begin{restatable}{theorem}{thmDoubleStars}\label{thm:double_stars}
Let $GW_N$ be a generalized wheel with $k$ groups and $2n$ vertices. Then $GW_N$ cannot be partitioned into plane spanning double stars if and only if there are three families of $\frac{k-1}{2}$ consecutive groups, each of which contains at most $n-2$ vertices, such that each group is in at least one family.
\end{restatable}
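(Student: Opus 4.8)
The plan is to prove the two implications separately, on top of a structural lemma about the plane spanning double stars of $GW_N$. The lemma I would establish first is the following: \emph{every plane spanning double star $D$ of $GW_N$ has a center $z$ and an arc of at least $\tfrac{k-1}{2}$ consecutive groups all of whose vertices equal $z$ or are leaves of $z$ in $D$} (so that $D$ restricted to those groups' vertices is a star centered at $z$, possibly edgeless). When the central vertex $c$ is a leaf, deleting $c$ leaves a plane double star on the $2n-1$ hull vertices, which are in convex position; by an elementary argument two stars joined by an edge avoid crossings on a convex point set exactly when their vertex sets are complementary contiguous arcs, so the hull splits into arcs $R_x \ni x$ and $R_y \ni y$. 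The groups fully contained in $R_x$ form a contiguous block $G_x$, those fully in $R_y$ a contiguous block $G_y$, and every other group is one of the (at most two) groups split by an endpoint of the arcs; hence $|G_x|+|G_y|\ge k-2$, and since these are nonnegative integers summing to the odd number $\ge k-2$, one of them is $\ge \tfrac{k-1}{2}$. Reinserting $c$ into one side changes neither $G_x$ nor $G_y$, since $c$ lies in no group, which proves the claim; the case where $c$ is a center is handled the same way, after checking that the spoke–chord non-crossing condition at $c$ again forces the two sides to restrict to complementary contiguous hull arcs.

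\emph{The ``if'' direction.} Assume three families $F_1,F_2,F_3$ of $\tfrac{k-1}{2}$ consecutive groups, each on at most $n-2$ vertices, together covering all $k$ groups, and suppose for contradiction that $GW_N=D_1\cup\dots\cup D_n$ is a partition into plane spanning double stars. The bookkeeping I would use starts from two observations. First, for any fixed family $F$, an edge of $GW_N$ with both endpoints in the groups of $F$ is incident, inside its double star, to a center of that double star, which must then lie in $F$; and since a double star is a tree it induces at most $|F|-1$ edges on the $\le|F|$ vertices in $F$, so the $\binom{|F|}{2}$ such edges force at least $\lceil|F|/2\rceil$ of the $D_j$ to have a center in $F$. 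Second, by the structural lemma each $D_j$ is pinned to a ``large'' side spanning $\ge\tfrac{k-1}{2}$ complete groups, so its other center is confined to an arc of only $\le\tfrac{k+1}{2}$ consecutive groups, and I would combine this confinement with the covering property of $F_1,F_2,F_3$ and a further refinement (a double star contributing the full $|F|-1$ edges inside $F$ must have a very high-degree center, of which there is a bounded supply) to show that the three lower bounds $\#\{j:\text{a center of }D_j\text{ lies in }F_i\}\ge\lceil|F_i|/2\rceil$ cannot simultaneously hold; adding the parallel accounting for the $2n-1$ spokes at $c$ (each again forced to a center of its double star) then gives the contradiction. It is exactly the hypotheses $|F_i|\le n-2$ that make the aggregate demand exceed the available supply.

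\emph{The ``only if'' direction and the main obstacle.} For the converse I would argue by contraposition: assuming that no three families of $\tfrac{k-1}{2}$ consecutive groups with at most $n-2$ vertices cover all groups, I construct a partition into $n$ plane spanning double stars. The template is the partition of a regular wheel into $n$ plane spanning double stars of Aichholzer et al.~\cite{aichholzer2017packing}; the task is to adapt it to arbitrary group sizes $n_1,\dots,n_k$ summing to $2n-1$. The idea is to spend a bounded number of double stars with $c$ as a center to absorb the vertices in excess of the average in the larger groups, and then to fill in the remainder with balanced double stars of the type from the structural lemma, each routed so that its $\tfrac{k-1}{2}$-group star region avoids the surplus — the failure of the covering condition is precisely what guarantees that enough such regions are available to place all of them. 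I expect this construction to be the main obstacle: it requires a case analysis over how the group sizes are distributed around the wheel, together with a careful verification that the resulting pieces form a genuine partition (matching $n(2n-1)=\binom{2n}{2}$), are all plane, and are all spanning; and throughout one must check that every crossing decision is insensitive to the $\varepsilon$-order within each group, which is also what permits the geometric regularity of $GW_N$ to be dropped, as remarked in the introduction. By comparison the structural lemma is short, and the ``if'' direction, while delicate, is a counting argument sitting on top of it.
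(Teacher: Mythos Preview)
Your approach is genuinely different from the paper's, but both directions have real gaps that are not just missing details.

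For the ``if'' direction, the counting you sketch does not close. From $|F_1|+|F_2|+|F_3|\ge 2n-1$ you get $\sum_i\lceil|F_i|/2\rceil\ge n$, while the $n$ double stars supply $2n$ centers, each hull center lying in at least one $F_i$; there is no contradiction here, and the ``further refinement'' you allude to (high-degree centers, the spoke accounting, the confinement of the second center) is where the entire argument would have to live. In particular, your structural lemma only controls the \emph{number of groups} on the large side of a double star, not the \emph{number of vertices}, so it does not interact directly with the hypothesis $|F_i|\le n-2$. I do not see a way to turn this into a proof without essentially rediscovering the obstruction the paper isolates.

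For the ``only if'' direction you are honest that the construction is the main obstacle, but what you describe is a plan for a plan: adapt the regular-wheel partition, absorb surplus into a few stars at $v_0$, hope the non-covering hypothesis leaves room. This is not yet a construction, and the case analysis you anticipate would be substantial.

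The paper avoids both difficulties by working one level down, at the \emph{spine matching}: the collection of spine edges of a double-star partition is a perfect matching with no parallel pair and no ``cross-blocker'', and conversely (for wheel sets) any such matching extends to a partition (Theorem~11 of \cite{schnider2016packing}). Exactly one spine edge is radial, say $(v_0,v_1)$, and the remaining spines are then forced to be the halving edges of the convex set $GW_N\setminus\{v_0,v_1\}$. The obstruction is that every non-radial halving edge $h$ of $GW_N$ defines a \emph{bad halfplane} $\cl(h^-)$, and $v_1$ must lie in all of them; a rotation argument shows that if the intersection of all bad halfplanes is nonempty it contains a radial halving endpoint, which then works. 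Helly's theorem reduces ``empty intersection of all bad halfplanes'' to ``three bad halfplanes with empty intersection'', and a short translation turns three such halfplanes into exactly your three covering families of $\tfrac{k-1}{2}$ consecutive groups with at most $n-2$ vertices. So the paper never builds the partition by hand: the existence direction is just ``pick the right radial spine and the rest is forced'', and the non-existence direction is a single local obstruction plus Helly.
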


We remark that
it is straightforward to model the problem of partitioning into (plane) subgraphs as an integer linear program (ILP), which easily computes solutions for point sets up to roughly 25 points. 
None of the proofs in this paper relies on the computer assisted ILP. But as it served as a great source of inspiration, we describe it in more detail in \Cref{sec:ILP}. 
 
In \Cref{sec:BW}, we prove \Cref{thm:main_spanning_trees,thm:main_subgraphs}, and in \Cref{sec:generalized_bw} we generalize our ideas, proving \Cref{thm:generalized_bw,thm:double_stars}.

\section{Bumpy Wheels}\label{sec:BW}

For a graph in (bumpy) wheel configuration we denote the center vertex by $v_0$ and the remaining vertices by $v_1,\ldots, v_{2n-1}$ in clockwise order. We also enumerate the groups in clockwise order: for $i \in \{1,\ldots,k\}$, $\mathcal{G}_i$ denotes the $i$'th group ($\mathcal{G}_1$ contains $v_1$, $\mathcal{G}_k$ contains $v_{2n-1}$)\footnote{We will consider the index of a group $\mathcal{G}_x$ always modulo $k$, but tacitly mean $((x-1) \mod k) + 1$ (since our indexing starts with 1). The same holds for any other objects, e.g., the vertices on the convex hull.}. An edge having $v_0$ as an endpoint is called \emph{radial edge}, an edge on the convex hull is called \emph{boundary edge} and all other edges are called \emph{diagonal edges}. For a non-radial edge $e$, we define~$e^-$ to be the open halfplane defined by (the supporting line through) $e$ and not containing $v_0$, and similarly $e^+$ to be the open halfplane containing $v_0$.

Additionally, we define a partial order $<_{c}$ on the set of non-radial edges, where $e <_{c} f$ if (the relative interior of) $e$ completely lies in $f^-$ (that is, $f$ is \enquote{closer} to the center vertex $v_0$ than $e$). Two non-radial edges $e,f$ are \emph{incomparable} with respect to $<_c$, if neither $e <_c f$ nor $f <_c e$ holds (we omit \enquote{with respect to $<_c$} if it is clear from the context). In the following, when speaking of an edge $e$ lying in $f^-$ or in $f^+$ for another edge $f$, we always refer to the relative interior of $e$ (that is, an endpoint of $e$ may lie on the line through $f$ --- which actually means to coincide with an endpoint of $f$). A non-radial edge $e$ is \emph{maximal} in some set of edges~$E$, if there is no other edge $e' \in E$ such that $e <_c e'$ (in the following we often consider maximal diagonal edges of plane spanning trees). \emph{Minimal} edges are defined similarly. See \Cref{fig:bw_definitions1} for an illustration. Let us emphasize that we never use $<_c$ for radial edges. 

\begin{figure}%[htb]
\centering
\includegraphics[scale=0.3,page=1]{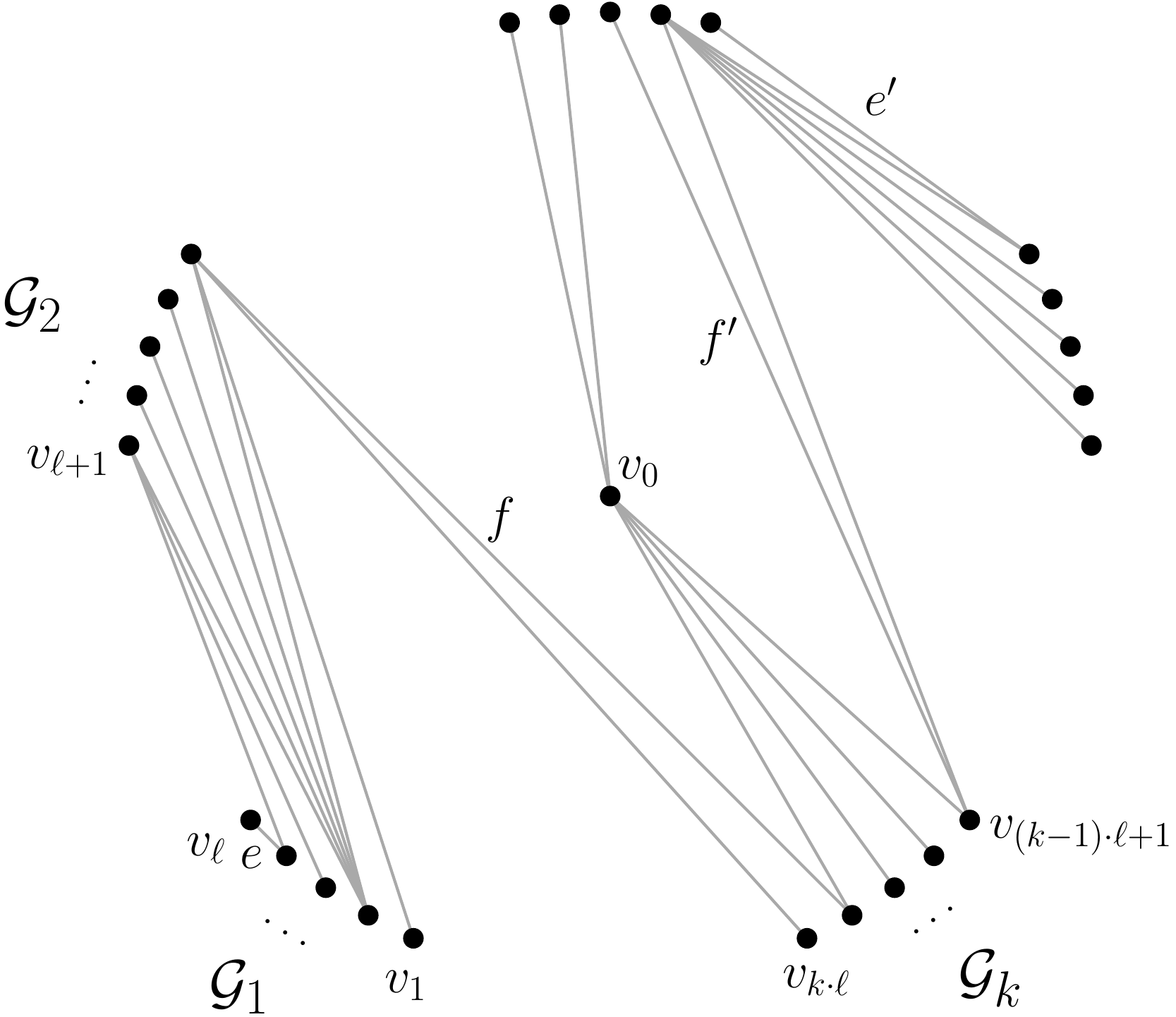}
\caption{Example of a plane spanning tree on the bumpy wheel set $BW_{5,5}$. The diagonal edges $f$ and $f'$ are maximal. The edges $e$ and $e'$ are boundary edges (they are also the only minimal edges).}
\label{fig:bw_definitions1}
\end{figure}

\subsection{Partition into Plane Spanning Trees}\label{sec:tree_disprove}

In this section, we prove \Cref{thm:main_spanning_trees}. We remark that the non-existence direction almost follows from \Cref{thm:main_subgraphs} (not even a partition into plane subgraphs is possible). The only case that is not covered is $BW_{3,5}$, which one can easily verify using computer assistance. However, since the proof of \Cref{thm:main_spanning_trees} is more instructive and intuitive, we decided to present it anyway.
We start with the non-existence direction:

\begin{restatable}{theorem}{disproveConjectureGeneral}
\label{thm:disprove_conjecture_general}
For any odd parameters $k \geq 3$ and $\ell \geq 5$, the edges of $BW_{k,\ell}$ cannot be partitioned into $n = \frac{k\ell + 1}{2}$ plane spanning trees.
\end{restatable}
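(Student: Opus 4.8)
The plan is to argue by contradiction: suppose $BW_{k,\ell}$ with $k\geq 3$ odd and $\ell\geq 5$ odd admits a partition into $n=\frac{k\ell+1}{2}$ plane spanning trees $T_1,\dots,T_n$. The central observation is that each spanning tree, having $2n-1=k\ell$ edges, and the total edge set being partitioned, is quite rigid; in particular each $T_i$ must use exactly one radial edge on average, but more importantly we should track \emph{diagonal} edges. A diagonal edge goes between two different groups $\mathcal{G}_i,\mathcal{G}_j$ with $j\neq i\pm 1$ (mod $k$); because the groups are $\varepsilon$-close, a diagonal edge behaves essentially like a chord of the regular $(k+1)$-gon skipping at least one vertex, and two diagonal edges that ``cross'' at the level of the underlying $W_{k+1}$ must actually cross. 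The key structural fact to extract first is: in each plane spanning tree $T_i$, consider its maximal diagonal edges (with respect to $<_c$); these are pairwise incomparable, hence pairwise non-crossing, hence — viewed in the base polygon — they are nested/parallel chords, and together with the constraint that $T_i$ is connected and spanning, there is a bound on how the groups on the ``far side'' of a maximal diagonal edge can be attached.

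The heart of the argument is a counting/parity contradiction coming from a carefully chosen set of groups or an ``antipodal'' pair of groups. Here is the mechanism I expect to work. Fix a group $\mathcal{G}_i$ and look at the $\binom{\ell}{2}$ boundary edges inside it plus the edges leaving it. Each tree $T_j$ restricted to the $\ell$ vertices of $\mathcal{G}_i$ is a forest; summing over $j$, the $\binom{\ell}{2}$ intra-group edges are distributed among the $n$ trees, so on average a tree gets $\frac{\binom{\ell}{2}}{n}=\frac{\ell(\ell-1)}{k\ell+1}<1$ of them — fewer than one, so \emph{most} trees contain no edge inside $\mathcal{G}_i$, and in such a tree the $\ell$ vertices of $\mathcal{G}_i$ are leaves or are connected only through vertices outside the group. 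Now combine this across \emph{all} $k$ groups: a tree that contains no intra-group edge at all must connect each group's $\ell$ vertices individually to the ``outside,'' which forces at least $k(\ell-1)$ edges leaving groups (roughly), i.e.\ radial or diagonal edges; but the total number of radial edges is only $2n-1$... wait, radial edges total $2n-1 = k\ell$, so this alone is not immediately contradictory. The refinement that gives the contradiction is to localize: consider a single group $\mathcal G_i$ together with the requirement that the $\ell$ vertices of $\mathcal G_i$ (for $\ell\ge 5$) must be spanned; a plane tree can ``serve'' the vertices of $\mathcal G_i$ either via an intra-group edge, a radial edge to $v_0$, or a diagonal edge — and the planarity constraint (a diagonal edge from $\mathcal G_i$ blocks radial edges from the vertices it separates from $v_0$) limits this severely. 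Quantifying this for each group and summing, the number of (radial + intra-group + diagonal) ``service slots'' needed exceeds the number available when $\ell\ge 5$, whereas for $\ell=3$ it is exactly tight (matching \Cref{thm:main_spanning_trees}).

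Concretely, the steps I would carry out are: (1) Establish the ``base polygon'' reduction — diagonal edges between groups cross iff the corresponding chords of $W_{k+1}$ cross, and the partial order $<_c$ on diagonal edges of a single plane tree makes its maximal diagonal edges pairwise incomparable and their base chords non-crossing. (2) For a single plane spanning tree $T$, prove a local lemma: for each group $\mathcal G_i$, the number of edges of $T$ incident to $\mathcal G_i$ that are intra-group, plus (number of radial edges at $\mathcal G_i$), plus (a weighted count of diagonal edges at $\mathcal G_i$) is at least $\ell-1$ minus a small slack, because $T$ must connect all $\ell$ vertices. (3) Sum this inequality over all groups $\mathcal G_i$ and over all trees $T_1,\dots,T_n$; the left side is bounded above using that there are $k\binom{\ell}{2}$ intra-group edges, $2n-1=k\ell$ radial edges, and $\binom{2n}{2}-k\ell-k\binom{\ell}{2}$ diagonal edges in total, while each diagonal edge is counted for the (at most) constant number of groups it touches. (4) Compare the two sides: for $\ell\ge 5$ the lower bound $nk(\ell-1-\text{slack})$ strictly exceeds the upper bound, a contradiction. (5) Verify the arithmetic collapses to equality exactly at $\ell=3$, consistent with the positive direction.

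The main obstacle I anticipate is step (2): pinning down the correct ``local balance'' inequality for a single group in a single plane tree. A diagonal edge incident to $\mathcal G_i$ can serve several vertices of $\mathcal G_i$ at once (it may be the root of a subtree containing many of them), so a naive ``one edge serves one vertex'' count is too weak; one needs to use planarity to show that once a diagonal edge from $\mathcal G_i$ is used, the vertices of $\mathcal G_i$ ``above'' it (on the $e^-$ side) can no longer reach $v_0$ directly, so they are forced to use \emph{more} intra-group or diagonal structure, and this is where the $\ell\ge 5$ threshold enters (with $\ell=3$ there simply are not enough vertices for the obstruction to bite). Making this trade-off precise — essentially an amortized/charging argument assigning each vertex of each group a ``cost'' paid by nearby edges — is the crux, and I expect the proof in the paper to introduce an invariant (perhaps counting maximal diagonal edges across all trees, which must total at least something like $n$ while each group can host at most a bounded number) to make it clean.
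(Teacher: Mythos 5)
Your proposal correctly identifies the general flavour of the paper's argument (a counting contradiction driven by maximal diagonal edges and planarity), but the crucial quantitative step is missing, and you concede as much: the ``local balance inequality'' of your step~(2) is stated only with an unspecified ``slack,'' the summation in steps~(3)--(4) is never carried out, and you yourself note that the naive version of the count is not contradictory. As written, the proposal is a plan rather than a proof, and the plan points in a direction different from the one that actually closes the gap.

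The paper's proof does not sum a per-group, per-tree connectivity inequality. Instead it first pins down the global structure of any hypothetical partition: counting the $2n-1$ boundary edges shows that exactly one tree $T_0$ has a single boundary edge and a single maximal diagonal edge (hence must absorb all radial edges of $\frac{k-1}{2}\ell+1$ consecutive hull vertices), while every other tree has exactly two boundary edges and exactly two maximal diagonal edges, with all of its radial edges confined to the span of those two edges. The decisive dichotomy is then whether a tree uses one of the $2k$ \emph{outmost} radial edges (first/last radial edge of a group): since $T_0$ already consumes $k$ of them, at most $k+1$ trees can. A tree using no outmost radial edge is forced into a ``special wedge'' whose apex sits at an inside vertex of a group, and a uniqueness result (one maximal diagonal edge per distance between each pair of opposite groups) caps the number of such trees at $\frac{k-1}{2}(\ell-2)$ --- note the loss of a factor because two of the relevant groups are opposite to each other. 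The inequality $k+1+\frac{k-1}{2}(\ell-2)<\frac{k\ell+1}{2}$ is exactly $\ell>3$. Your intra-group ``service slot'' accounting does not capture this wedge/apex mechanism, and without it (or an equally sharp substitute) the threshold $\ell\ge 5$ does not emerge; so the proposal has a genuine gap at its core step.
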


Towards the proof of \Cref{thm:disprove_conjecture_general}, we will first prove several structural results concerning the number and arrangement of radial and diagonal edges in the spanning trees of a potential partition (some of which have a similar flavor as those in \cite{trao2019edge}). We show that radial edges must lie between maximal diagonal edges and those maximal diagonal edges need to fulfill certain distance constraints. We will show that this cannot be satisfied if $\ell \geq 5$.

The following observation follows immediately from the construction of bumpy wheel sets and the definition of the partial order $<_{c}$.

\begin{observation}\label{obs:maximal_edges_disjoint_vertices}
For two non-radial, non-crossing, incomparable edges $e$, $f$ the vertices in $e^-$ and $f^-$ are disjoint and neither $e^-$ nor $f^-$ contains an endpoint of the other edge.
\end{observation}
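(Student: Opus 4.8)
The plan is to prove \Cref{obs:maximal_edges_disjoint_vertices} directly from the geometry of bumpy wheel sets together with the definition of $<_c$, so this is really an unpacking of definitions rather than a deep argument. Recall that for a non-radial edge $g$, $g^-$ is the open halfplane bounded by the supporting line of $g$ not containing $v_0$, and that $e,f$ incomparable means neither $e$ lies entirely in $f^-$ nor $f$ lies entirely in $e^-$; additionally we are given that $e$ and $f$ do not cross.

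First I would show that $e^-$ contains no endpoint of $f$ (and symmetrically). Suppose some endpoint $p$ of $f$ lies in $e^-$. Since all non-center vertices lie on the convex hull and $v_0$ is interior, the other endpoint $q$ of $f$ lies either in $e^-$, on the line through $e$, or in $e^+$. If $q \in e^- \cup \ell(e)$, then the relative interior of $f$ lies in $e^-$, so $f <_c e$, contradicting incomparability. If $q \in e^+$, then the segment $f = pq$ crosses the line $\ell(e)$; because both endpoints of $e$ are convex-hull vertices and the crossing point lies on the segment $pq$ whose endpoints are also hull vertices, the intersection point lies in the relative interior of the chord $e$ — formally, any two chords of a convex polygon whose endpoints strictly separate each other along the boundary must cross in their interiors. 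Here I would use that $p$ on the far side of $e$ forces $p$ to lie on the boundary arc cut off by $e$ away from $v_0$, while $q$ lies on the complementary arc, hence the endpoints of $e$ and $f$ interleave around the hull and $e,f$ cross — contradiction. So no endpoint of $f$ is in $e^-$, and by symmetry no endpoint of $e$ is in $f^-$.

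Next I would show the vertex sets in $e^-$ and $f^-$ are disjoint. Suppose a vertex $w \neq v_0$ lies in $e^- \cap f^-$. Since $w$ is a hull vertex, $w$ lies on the short boundary arc determined by $e$ (the one not containing the arc near $v_0$) and also on the short arc determined by $f$. By the previous paragraph, the endpoints of $f$ lie in $\cl(e^+)$, i.e. on the long arc of $e$ (or coincide with endpoints of $e$); combined with $w$ being on the short arc of $f$, a straightforward case check on the cyclic order of the (at most six) hull points $w$, the two endpoints of $e$, and the two endpoints of $f$ shows this is impossible without $e$ and $f$ crossing or one being $<_c$ the other. I would phrase this cleanly using the ``$\frac{k+1}{2}$ consecutive groups'' property: $e^-$ contains only vertices from a set of consecutive groups strictly fewer than $\frac{k+1}{2}$ in number (else $v_0 \in e^-$), and the analogous statement for $f$; if these group-ranges overlap in a vertex, the non-crossing and incomparability constraints force one range to be nested in the other, which makes $e$ and $f$ comparable.

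The only mild obstacle is being careful with the degenerate boundary cases — when an endpoint of $e$ coincides with an endpoint of $f$, or lies exactly on the supporting line of the other edge (the excerpt explicitly flags that ``lying on the line through $f$'' means coinciding with an endpoint of $f$). In those cases the two edges share a vertex, so they trivially do not cross, and the shared vertex is not in the open halfplane of either, so it does not violate disjointness; one just has to note that the remaining endpoints still cannot interleave. None of this requires computation; the whole proof is a finite case analysis on the cyclic order of at most four hull points, driven by the convex-position of all non-center vertices and the defining ``center-avoiding'' property of bumpy wheels.
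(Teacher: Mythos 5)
Your argument is correct. Note that the paper states \Cref{obs:maximal_edges_disjoint_vertices} without proof (it is asserted to follow immediately from the construction and the definition of $<_c$), so there is no official proof to compare against; your verification via convex position is exactly the intended content. The two halves of your argument are sound: an endpoint of $f$ in $e^-$ forces either $f<_c e$ or an interleaving of endpoints along the hull (hence a proper crossing), and once all endpoints of each edge lie in the closed positive halfplane of the other, a common vertex $w\in e^-\cap f^-$ would force the arc of the hull cut off by $f$ to swallow an endpoint of $e$ (or vice versa), again a contradiction. Two small remarks: the shared-endpoint case, which you flag but dispatch quickly, does need the slightly different observation that the half-open arc $\cl(e^-)\setminus\{a\}$ still avoids both endpoints of $f$ (or, alternatively, that a hull vertex in the opposite wedge at the shared endpoint $a$ would place $a$ inside the triangle spanned by three other vertices, contradicting extremality of $a$); and the parenthetical reformulation in terms of ``strictly fewer than $\frac{k+1}{2}$ consecutive groups'' is not quite accurate (a distance-$d_1$ edge has $e^-$ meeting $\frac{k+1}{2}$ groups), but since that is offered only as an optional rephrasing and the convex-position argument carries the proof, it does not affect correctness.
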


Recall that we always refer to \emph{proper} crossings when speaking of crossing edges, that is, $e$ and $f$ in the above observation may share an endpoint. Also note that for any set of edges~$E$, two maximal edges $e,e' \in E$ are always incomparable. 

\begin{restatable}{lemma}{lemPreliminaryOne}\label{lem:preliminary1}
Let $T$ be a plane spanning tree of $BW_{k,\ell}$. Then the following properties hold:
\begin{romanenumerate}
\item\label{lem:prel1:a} for any diagonal edge $e \in E(T)$, $T$ contains at least one boundary edge in~$e^-$,

\item\label{lem:prel1:b} for any pair of incomparable diagonal edges $e,f \in E(T)$, the boundary edges of $T$ in $e^-$ and $f^-$ are distinct, and

\item\label{lem:prel1:c} if $T$ contains exactly one maximal diagonal edge, $T$ contains at least $(\frac{k-1}{2}\ell~+~1)$ consecutive radial edges (in particular, all radial edges of $\frac{k-1}{2}$ consecutive groups).
\end{romanenumerate}\
\end{restatable}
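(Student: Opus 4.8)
The plan is to prove the three parts in order, since (i) feeds into (ii), and both feed into (iii).

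\medskip
\textbf{Part (i).} Fix a diagonal edge $e \in E(T)$. The region $e^-$ is a convex region cut off by the line through $e$, bounded by part of the convex hull; since $e$ is diagonal, $e^-$ is nonempty and contains at least one vertex of $BW_{k,\ell}$. Take the vertex set $S = \{v : v \in e^-\} \cup \{$endpoints of $e\}$. I would argue that $T$ restricted to $S$ must connect all of $S$ (because $T$ is spanning and plane: any $T$-path from a vertex in $e^-$ to $v_0$ must cross the line through $e$, but it cannot cross $e$ itself, so it must pass through an endpoint of $e$; hence the endpoints of $e$ form a cut separating $e^-$ from $v_0$, and the subtree hanging below is connected on $S$). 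Now $T[S]$ is connected and contains at least one vertex strictly in $e^-$, so it has at least one edge; the ``outermost'' such edge — a minimal edge of $T[S]$ with respect to $<_c$, or more carefully an edge of $T[S]$ lying in $e^-$ that is extremal toward the hull — must be a boundary edge, because otherwise there would be a vertex of $BW_{k,\ell}$ strictly outside it but inside $e^-$, contradicting extremality. This needs the geometry of bumpy wheels: the vertices on the hull between the two endpoints of $e$ are exactly the ones in $e^-$, and consecutive hull vertices are joined by boundary edges, so the extremal edge spanning down to the hull is forced to be a boundary edge. This is the place where $\varepsilon$-closeness of groups is invoked to make ``extremal toward the hull'' well-defined.

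\medskip
\textbf{Part (ii).} Let $e, f$ be incomparable diagonal edges of $T$. By \Cref{obs:maximal_edges_disjoint_vertices} (applicable because $e, f$ are non-crossing — they live in a plane tree — non-radial, and incomparable), the vertex sets in $e^-$ and $f^-$ are disjoint, and neither region contains an endpoint of the other edge. Hence the boundary edge of $T$ guaranteed in $e^-$ by part (i) has both endpoints among the vertices in $e^-$, which are disjoint from those in $f^-$, so it cannot equal the boundary edge in $f^-$. The only subtlety is a shared hull vertex on the ``seam'': I would check that even if $e$ and $f$ share an endpoint $v$ on the hull, the boundary edge in $e^-$ uses vertices strictly on the $e^-$ side, and similarly for $f$, so they are still distinct — this follows again from \Cref{obs:maximal_edges_disjoint_vertices}, which explicitly says $e^-$ does not contain an endpoint of $f$.

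\medskip
\textbf{Part (iii).} Suppose $T$ has exactly one maximal diagonal edge $e$ (or no diagonal edge at all, which is a degenerate sub-case handled separately). Every diagonal edge of $T$ is then $\le_c e$, i.e.\ lies in $e^-$. Consider the two endpoints $v_a, v_b$ of $e$ (on the hull) and the $\ge \frac{k-1}{2}\ell$ hull vertices they cut off — wait, more carefully: since $e$ is a single edge not containing $v_0$, the side $e^+$ contains $v_0$ and the hull vertices not cut off; the complementary side $e^-$ contains the remaining hull vertices. I would argue that all of these non-cut-off hull vertices (those in $e^+$, together with $v_a$, $v_b$) must reach $v_0$ in $T$ without using any diagonal edge (all diagonals are buried in $e^-$) and without using a boundary edge that would force a detour into $e^-$; the cleanest way is: for each such hull vertex $v$, the $T$-path from $v$ to $v_0$ can use only radial and boundary edges among the ``outer'' vertices, and a counting/connectivity argument (the outer part of $T$ is a tree on those hull vertices plus $v_0$, using only radial and boundary edges, hence is a ``fan'' forcing a long consecutive run of radial edges) gives that at least $\frac{k-1}{2}\ell + 1$ consecutive radial edges appear. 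Concretely, the hull arc in $e^+$ spans at least $\frac{k-1}{2}$ full groups (by the defining property that the convex hull of any $\frac{k+1}{2}$ consecutive groups misses $v_0$, so $e$ — which together with $v_0$ must ``see'' $v_0$ on its $+$ side — cuts off at most $\frac{k-1}{2}$ groups on its $-$ side, leaving $\ge \frac{k+1}{2}$ groups, but we only need $\frac{k-1}{2}\ell+1$ vertices), and each of these vertices contributes a radial edge because the only way into $v_0$ is radial or through a boundary edge to a neighbor that itself reaches $v_0$ — and tracing this shows that on a consecutive stretch all radial edges must be present, else the tree is disconnected or non-spanning.

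\medskip
\textbf{Main obstacle.} The delicate point, and the one I would spend the most care on, is part (iii): turning ``all diagonals lie in $e^-$'' into the precise count ``$\frac{k-1}{2}\ell + 1$ consecutive radial edges.'' One has to be careful about boundary edges that straddle group boundaries and about whether the path to $v_0$ from an outer vertex could dip through an endpoint of $e$ and then use diagonals inside $e^-$ — ruling that out requires that $e$'s endpoints, being only two vertices, cannot simultaneously be the gateway for all outer vertices while keeping the tree plane and spanning. I expect the right phrasing is to look at $T$ minus the open region $e^-$: what remains is a plane tree on the outer hull vertices plus $v_0$, with no diagonals, so it is outerplanar in a fan-like way, and a short lemma on such ``radial+boundary only'' trees on a convex arc plus apex gives the consecutive-radial-edges bound. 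Parts (i) and (ii) are comparatively routine once \Cref{obs:maximal_edges_disjoint_vertices} and the $\varepsilon$-close group geometry are in hand.
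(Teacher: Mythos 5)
Your parts (i) and (ii) are correct and essentially the paper's own argument: take a minimal edge $f$ of $T$ inside $e^-$, note that a vertex of $f^-$ would have to attach to the tree by an even smaller edge, and combine the resulting existence statement with \Cref{obs:maximal_edges_disjoint_vertices} for distinctness. Nothing to change there.

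Part (iii) is where your proposal has a genuine gap, and the ``fan lemma'' you fall back on is false. You correctly reduce to the situation where every diagonal edge of $T$ lies in $\cl(f^-)$ for the unique maximal diagonal edge $f$, so the part of $T$ living on $f^+$ (an arc of at least $\frac{k-1}{2}\ell$ hull vertices plus $v_0$) uses only radial and boundary edges. But a plane spanning tree on a convex arc plus the apex $v_0$ that uses only radial and boundary edges does \emph{not} force a long consecutive run of radial edges: a ``comb'' that sends one radial edge into each group and reaches the remaining vertices of that group by boundary edges is plane, spans the arc, and need not contain even two consecutive radial edges. So the count cannot be extracted from the outer tree being ``radial+boundary only''; what is actually needed is that \emph{no boundary edge of $T$ at all} lies in $f^+$. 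Once that is known, every hull vertex of $f^+$ other than the endpoints of $f$ is incident in $T$ exclusively to radial edges, hence (since $T$ is spanning) its radial edge belongs to $T$, and one more radial edge to an endpoint of $f$ is needed to attach the component hanging below $f$; this yields the $\frac{k-1}{2}\ell+1$ consecutive radial edges. Excluding boundary edges in $f^+$ is precisely the step you flag as delicate but do not carry out, and it does not follow from ``$f$ is the only maximal diagonal edge'' alone, since a boundary edge in $f^+$ is incomparable to $f$ and is not a diagonal edge. (The paper's proof of (iii) likewise asserts in one line that all vertices of $f^+$ must be reached by radial edges; where the lemma is applied, the missing information comes from the partition context, e.g.\ that $T_0$ has a single boundary edge, which by part (i) already lies in $\cl(f^-)$.) Your write-up must either add such a hypothesis explicitly or prove that boundary edges in $f^+$ cannot occur; the outer-tree reduction by itself does not close the argument.
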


\begin{proof}%[Proof of \Cref{lem:preliminary1}]
For part (i), let $f$ be a minimal edge of $T$ in $e^-$. If $f^-$ does not contain any vertex of the input point set, it is a boundary edge and we are done. Otherwise, since $T$ is connected, at least one vertex in $f^-$ has to be connected to an endpoint of $f$, forming a smaller edge, which is a contradiction.

Part (ii) follows immediately from \Cref{obs:maximal_edges_disjoint_vertices} (distinctness) and part (i) (existence).

Concerning part (iii), let $f$ be the maximal diagonal edge of $T$. Clearly, $f^+$ contains the vertices of at least $\frac{k-1}{2}$ consecutive groups and since $f$ is the only maximal diagonal edge, all vertices in $f^+$ need to be reached by radial edges (plus one to connect to $f$).
\end{proof}

Note that any spanning tree in a partition of $BW_{k,\ell}$ contains a maximal diagonal edge, since the star around~$v_0$ clearly cannot be used in such a partition.

\begin{restatable}{proposition}{propPreliminaryTwo}\label{prop:preliminary2}
Let $T_0, \ldots, T_{n-1}$ be a partition of $BW_{k,\ell}$ into plane spanning trees (if it exists). Then exactly one of those trees, say $T_{0}$, contains a single boundary edge and a single maximal diagonal edge and all other $n-1$ trees contain exactly two boundary edges and exactly two maximal diagonal edges each. 
	
In particular, any diagonal edge $e \in E(T_i)$ contains exactly one boundary edge of $T_i$ in~$e^-$.
\end{restatable}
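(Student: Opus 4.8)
The plan is to count boundary edges via double counting, using Lemma~\ref{lem:preliminary1}(\ref{lem:prel1:a})--(\ref{lem:prel1:b}), and then leverage the observation immediately preceding the statement (every tree in a partition has a maximal diagonal edge) to pin down the exact distribution. First I would fix a partition $T_0,\dots,T_{n-1}$ and recall that $BW_{k,\ell}$ has exactly $2n-1 = k\ell$ boundary edges, and these are partitioned among the $T_i$. Next, for each tree $T_i$, let $d_i \ge 1$ be the number of maximal diagonal edges of $T_i$ (at least one, by the remark after Lemma~\ref{lem:preliminary1}). Any two maximal edges are incomparable, so by Lemma~\ref{lem:preliminary1}(\ref{lem:prel1:b}) the sets of boundary edges of $T_i$ lying in $e^-$ for the various maximal diagonal edges $e$ of $T_i$ are pairwise distinct, and by (\ref{lem:prel1:a}) each is nonempty; hence $T_i$ contains at least $d_i$ boundary edges. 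Summing, $\sum_i d_i \le \sum_i \#\{\text{boundary edges of }T_i\} = k\ell$.

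The second ingredient is a lower bound on $\sum_i d_i$. Here I would argue that a tree with exactly one maximal diagonal edge "wastes" radial edges: by Lemma~\ref{lem:preliminary1}(\ref{lem:prel1:c}), such a tree uses at least $\frac{k-1}{2}\ell + 1$ radial edges. Since there are only $2n-1 = k\ell$ radial edges in total, at most one tree can be of this type when $k \ge 3$ — indeed two such trees would require $2(\frac{k-1}{2}\ell + 1) = (k-1)\ell + 2 > k\ell$ radial edges for $\ell \ge 3$, a contradiction (as $2 > \ell$ fails). Wait — this inequality needs care: $(k-1)\ell + 2 \le k\ell \iff 2 \le \ell$, which is true, so this crude bound does \emph{not} immediately forbid two such trees. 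I would instead sharpen: a tree with a single maximal diagonal edge also has a single boundary edge (the minimal edge argument, or directly: its maximal diagonal edge $e$ has all of $e^-$ reached without crossings, forcing exactly one boundary edge in $e^-$ and none elsewhere since any other boundary edge would be incomparable to $e$ and, lying in $e^+$... hmm). The cleaner route: count boundary edges. If $t$ trees have exactly one maximal diagonal edge and the rest have at least two, then $\sum_i d_i \ge t + 2(n-t) = 2n - t$, so $2n - t \le k\ell = 2n-1$, giving $t \ge 1$. For the reverse, a one-maximal-diagonal tree contributes (by the refined count) exactly $1$ boundary edge and $\ge \frac{k-1}{2}\ell+1$ radial edges; if $t \ge 2$ the radial-edge budget $k\ell$ is exceeded once $t(\frac{k-1}{2}\ell+1) > k\ell$, i.e. for $t=2$ we need $(k-1)\ell + 2 > k\ell \iff 2 > \ell$, which is \emph{false}. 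So the radial count alone is genuinely not enough, and I must combine it with the boundary count: total boundary edges $= k\ell$, and each tree has $\ge d_i$ of them; the $t$ special trees together use at least... this still only gives $\sum d_i$. The decisive step is therefore: \emph{equality must hold throughout}, i.e. $\sum_i d_i = k\ell = 2n-1$ and each tree has exactly $d_i$ boundary edges, one in the $e^-$ region of each maximal diagonal edge $e$. Combined with $d_i \ge 1$, $\sum d_i = 2n-1$, and the fact (to be extracted from part (iii) plus a radial-edge count) that at most one $d_i$ equals $1$, the only possibility is that exactly one tree has $d_i = 1$ (hence one boundary edge) and all others have $d_i = 2$ (hence exactly two boundary edges).

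To make "at most one $d_i = 1$" rigorous — the step I expect to be the main obstacle — I would show that a tree $T$ with a single maximal diagonal edge contains \emph{no} radial edge incident to the $\le \frac{k-1}{2}$ groups "behind" its maximal diagonal edge beyond what (iii) forces, and more importantly that two such trees would need to place their large radial-edge bundles in overlapping group-ranges (since each bundle spans $\frac{k-1}{2}$ of only $k$ groups, and $2\cdot\frac{k-1}{2} = k-1 < k$ leaves almost no room), forcing a shared radial edge — contradicting edge-disjointness. Concretely: tree $T$ with one maximal diagonal edge $f$ uses all radial edges to some $\frac{k-1}{2}$ consecutive groups; tree $T'$ likewise to some $\frac{k-1}{2}$ consecutive groups; if these ranges share even one group, that group's radial edges are claimed twice. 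Two arcs of length $\frac{k-1}{2}$ on a cycle of length $k$ are disjoint only if $k - 1 \le k - (\text{gaps})$... in fact two disjoint arcs of length $\frac{k-1}{2}$ need $2\cdot\frac{k-1}{2} = k-1 \le k$, leaving exactly one group free, which is feasible — so even this needs the extra "$+1$" radial edge from (iii) (the edge connecting to $f$), which lands in a third group, and three arcs cannot be packed: $3 \cdot 1 > $ available slack. I would finish by checking this packing obstruction forces $t \le 1$, combine with $t \ge 1$ to get $t = 1$, and conclude. The very last sentence ("any diagonal edge $e\in E(T_i)$ contains exactly one boundary edge of $T_i$ in $e^-$") then follows because equality in the counting forces each tree's boundary edges to be distributed one-per-maximal-diagonal, and any diagonal edge $e$ lies in $g^-$ for some maximal $g$ of $T_i$ with $e <_c g$ or $e$ itself maximal, so $e^- \subseteq g^-$ contains that one boundary edge, and no more (a second would create an incomparable pair violating the per-tree boundary count).
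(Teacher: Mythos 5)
Your skeleton matches the paper's: use the $2n-1$ boundary edges and \Cref{lem:preliminary1}~(\ref{lem:prel1:a})--(\ref{lem:prel1:b}) to get at least one tree with a single boundary edge (hence a single maximal diagonal edge), show there is at most one such tree, and let the equality cases in the counting force exactly two boundary edges and two maximal diagonal edges everywhere else. The gap is in the ``at most one'' step. You correctly notice that the raw radial-edge budget does not rule out two trees with a single maximal diagonal edge each, since $(k-1)\ell+2\le k\ell$ whenever $\ell\ge 2$. But your replacement --- a packing obstruction for the arcs of radial edges --- does not work either: each such tree occupies one run of $\frac{k-1}{2}\ell+1$ \emph{consecutive} radial edges (the ``$+1$'' is the last edge of that run, not a separate bundle in a third group), and two disjoint runs of this length fit on the cycle of $k\ell$ radial edges with slack $\ell-2\ge 1$. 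So no contradiction arises from the two large trees alone, and your argument as written does not force $t\le 1$.

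The missing idea --- and the one the paper uses --- is that \emph{every} spanning tree in the partition must contain at least one radial edge, because $v_0$ can only be reached via radial edges. If two trees each had a single maximal diagonal edge, they would together consume at least $(k-1)\ell+2$ radial edges by \Cref{lem:preliminary1}~(\ref{lem:prel1:c}), leaving at most $\ell-2$ radial edges for the remaining $n-2$ trees; but $n-2=\frac{k\ell+1}{2}-2>\ell-2$ for $k,\ell\ge 3$, a contradiction. With that step repaired, the rest of your argument (equality throughout the boundary-edge count, and the ``in particular'' clause via $e^-\subseteq g^-$ for a maximal diagonal edge $g$ of $T_i$ with $e\le_c g$) goes through essentially as in the paper.
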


\begin{proof}%[Proof of \Cref{prop:preliminary2}]
Every $T_i$ contains at least one maximal diagonal edge and hence, by \Cref{lem:preliminary1} (\ref{lem:prel1:a}), also at least one boundary edge.

Since there are $2n-1$ boundary edges in total, at least one tree (w.l.o.g. $T_0$) contains exactly one boundary edge. By \Cref{lem:preliminary1} (\ref{lem:prel1:a}) and (\ref{lem:prel1:b}) it also contains exactly one maximal diagonal edge.

Now, if there was a second spanning tree $T_{1}$ in the partition with exactly one maximal diagonal edge, $T_{0}$ and $T_{1}$ together would use at least $(k-1) \cdot \ell + 2$ radial edges (by \Cref{lem:preliminary1}~(\ref{lem:prel1:c})). This leaves at most $\ell-2$ radial edges for the remaining $n-2$ trees; clearly not enough (since $n = \frac{k\ell+1}{2} > \ell$ for $k,\ell \geq 3$).

Hence, all other $n-1$ spanning trees have to contain at least two maximal diagonal edges and therefore at least two boundary edges. However, since we have $2n-1$ boundary edges in total and only one tree contains a single boundary edge, all other $n-1$ trees have to contain exactly two boundary edges. By \Cref{lem:preliminary1} (\ref{lem:prel1:b}), they also contain at most, and therefore exactly, two maximal diagonal edges.
\end{proof}

From now on, $T_0$ always denotes the spanning tree with exactly one boundary edge (when considering a partition into plane spanning trees). Further, we let all radial edges $\{v_0, v_i\}$ for $i \in \{1,2,\ldots,\frac{k-1}{2}\ell+1\}$ be part of $T_0$ (which we can assume without loss of generality due to symmetry).

For  two non-radial, non-crossing edges $e$, $f$, define the \emph{span} of $e$ and $f$ to be the (closed) area between the two edges, that is,
\[
\spn(e,f) = 
\begin{cases}
\cl(e^+ \cap f^+) & \text{if } e \text{ and } f \text{ are incomparable} \\
\cl(e^+ \cap f^-) & \text{if } e <_{c} f,
\end{cases}
\]
where $\cl(\cdot)$ denotes the closure. The shaded area in \Cref{fig:bw_general_counter_definitions} for instance defines the span of two incomparable edges $e$ and $f$.

Note, however, that we are more interested in the vertices and edges contained in the span, rather than the area itself. If we want to emphasize this, we may use the notation $V(\spn(e,f))$ or $E(\spn(e,f))$. In the following we are mostly interested in the span of maximal diagonal edges of some plane spanning tree.

\begin{restatable}{lemma}{lemBWTwoBoundary}\label{lem:bw_2_boundary}
Let $T_0, \ldots, T_{n-1}$ be a partition of $BW_{k,\ell}$ into plane spanning trees (if it exists) and $e,f$ be the maximal diagonal edges of some $T_i$ ($i\neq 0$). Then, all edges of $T_i$ in the span of $e$ and $f$ are radial (except $e$ and $f$ of course), and all radial edges of $T_i$ are in the span of $e$ and $f$.
\end{restatable}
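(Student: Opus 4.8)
The plan is to analyze the tree $T_i$ ($i\neq 0$) via its two maximal diagonal edges $e,f$, which by \Cref{prop:preliminary2} exist and are unique, and to use \Cref{obs:maximal_edges_disjoint_vertices} to split the vertex set into three regions: $e^-$, $f^-$, and $\spn(e,f)$. First I would observe that since $e$ and $f$ are both maximal in $E(T_i)$, they are incomparable, and since $T_i$ is plane they do not cross; hence by \Cref{obs:maximal_edges_disjoint_vertices} the regions $e^-$ and $f^-$ are disjoint and contain none of the other's endpoints. Every vertex of $BW_{k,\ell}$ therefore lies in exactly one of $V(e^-)$, $V(f^-)$, $V(\spn(e,f))$ (the center $v_0$ being in the span), and by \Cref{prop:preliminary2} the edges of $T_i$ inside $e^-$ and inside $f^-$ consist of exactly one boundary edge each together with whatever reaches those vertices — in any case, all edges of $T_i$ not incident to $v_0$ that lie strictly "below" $e$ or $f$ are accounted for on the outer side.

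Next I would argue the two claims. For "all radial edges of $T_i$ lie in $\spn(e,f)$": a radial edge $\{v_0,v_j\}$ has $v_0$ in the span; if $v_j$ were in $e^-$ (say), then since $e$ is a diagonal edge with $v_0 \in e^+$, the segment $v_0v_j$ would cross $e$ — contradicting planarity of $T_i$ unless $v_j$ is an endpoint of $e$, in which case the radial edge still lies in $\spn(e,f)$ (its interior is in $e^+$). So no radial edge escapes the span. For "all non-radial edges of $T_i$ in the span other than $e,f$ are... " — wait, the statement is the converse: all edges of $T_i$ in the span, except $e$ and $f$, are radial. Here I would take an edge $g \in E(T_i)$ with $g \subseteq \spn(e,f)$, $g \notin\{e,f\}$, and suppose $g$ is non-radial. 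Then $g$ is comparable to nothing above it except possibly $e$ or $f$; since $g$ lies in the span, $g <_c e$ or $g <_c f$ or $g$ is incomparable to both and strictly inside. In every case $g$ is a non-maximal diagonal (or boundary) edge. If $g$ is a boundary edge it lies in some $e'^-$ for a diagonal edge $e'$ of $T_i$ — but the only diagonal edges of $T_i$ are $e,f$ and the non-maximal ones, and \Cref{lem:preliminary1}(i) together with \Cref{prop:preliminary2} pins each boundary edge of $T_i$ into $e^-$ or $f^-$, contradicting $g\subseteq\spn(e,f)$. If $g$ is a non-maximal diagonal edge, then $g <_c e$ or $g <_c f$; by \Cref{lem:preliminary1}(i) there is a boundary edge of $T_i$ in $g^-\subseteq e^-$ (or $\subseteq f^-$), which is fine, but now $T_i$ has a diagonal edge $g$ strictly inside the span whose "below" region $g^-$ still lies in the span (since $g\subseteq\spn(e,f)$ and $g$ is not below $e$ or $f$ in the outward sense) — this forces an extra boundary edge of $T_i$ beyond the two guaranteed by \Cref{prop:preliminary2}, a contradiction. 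Hence $g$ must be radial.

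The main obstacle I expect is the bookkeeping in the second part: carefully ruling out a non-maximal diagonal edge sitting inside $\spn(e,f)$ without double-counting boundary edges. The clean way is to apply \Cref{prop:preliminary2}'s final sentence — every diagonal edge $e'\in E(T_i)$ has \emph{exactly one} boundary edge of $T_i$ in $e'^-$ — and then a counting argument: $e^-$ and $f^-$ already contain the two (distinct, by \Cref{lem:preliminary1}(ii)) boundary edges of $T_i$; a third diagonal edge $g$ inside the span with $g^-$ landing inside $\spn(e,f)$ would need a boundary edge of $T_i$ in $g^-$ that is distinct from those two (again by \Cref{lem:preliminary1}(ii), since $g$ is incomparable to at least one of $e,f$ on the relevant side), giving $T_i$ a third boundary edge and contradicting \Cref{prop:preliminary2}. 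So the proof reduces to: radial edges cannot cross $e$ or $f$ (planarity), hence live in the span; and non-radial edges in the span would manufacture a forbidden third boundary edge. I would present it in exactly that two-bullet structure, citing \Cref{obs:maximal_edges_disjoint_vertices}, \Cref{lem:preliminary1}, and \Cref{prop:preliminary2} at each step.
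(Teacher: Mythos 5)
Your proposal is correct and follows essentially the same two-part argument as the paper: radial edges outside the span would cross $e$ or $f$ (violating planarity), and a non-radial edge inside the span would either contradict the maximality of $e$ or $f$ or force a third boundary edge of $T_i$, contradicting \Cref{prop:preliminary2}. Note only that in the middle of your write-up the direction of $<_c$ is inverted a few times (an edge $g$ in the span cannot satisfy $g <_c e$, since that would place $g$ in $e^-$); your concluding summary resolves to the correct argument, which matches the paper's proof.
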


\begin{proof}%[Proof of \Cref{lem:bw_2_boundary}]
Assume $h$ is a non-radial edge in the span of $e$ and $f$. Then, either $h^-$ contains $e$ or $f$ or an additional third boundary edge (or $h$ is a boundary edge itself), a contradiction in any case. Furthermore, any radial edge not contained in $\spn(e,f)$ must cross either $e$ or $f$, and therefore cannot be part of $T_i$, due to planarity.
\end{proof}

Define the \emph{distance} $\dist(e)$ of a non-radial edge $e$ to be the number of vertices in~$e^-$ plus one (or in other words, the number of boundary edges in $\cl(e^-)$). Clearly, $1\leq \dist(e) \leq \frac{k+1}{2}\ell -1$ holds for any non-radial edge $e$ and $\dist(f) < \dist(e)$ holds for any edge $f \subseteq e^-$. It will be convenient to define, for $i \in \{1,\ldots, \frac{k+1}{2}\ell-1\}$:
\begin{equation}\label{eq:d_i}
d_i = \frac{k+1}{2}\ell - i.
\end{equation}
We define it in this (slightly counter-intuitive) way, $d_1$ being the largest distance, since we mostly deal with edges of large distances and thereby aim to improve the readability.

\begin{restatable}{lemma}{lemContinueDiagonalEdges}\label{lem:continue_diagonal_edges}
Consider a plane spanning tree $T$ of a partition of $BW_{k,\ell}$ and let~$e$ be a diagonal edge in $T$ of distance $d = \dist(e) > 1$. Then $T$ also contains exactly one of the edges of distance $d-1$ in $e^-$.
\end{restatable}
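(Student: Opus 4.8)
The plan is to localize everything to the $d+1$ consecutive convex-position vertices cut off by $e$, and to combine planarity with \Cref{prop:preliminary2} and one classical fact about convex polygons. Write $e=\{v_a,v_b\}$ with $b=a+d$, so the vertices in $e^-$ are exactly $v_{a+1},\dots,v_{b-1}$ and the two edges of distance $d-1$ in $e^-$ are $e_L:=\{v_a,v_{b-1}\}$ and $e_R:=\{v_{a+1},v_b\}$. If $d=2$, then $e_L$ and $e_R$ are precisely the two boundary edges in $\cl(e^-)$, so \Cref{prop:preliminary2} already gives that exactly one of them lies in $T$; hence assume $d\ge 3$ from now on. Since $v_a,v_{a+1},v_{b-1},v_b$ occur in this cyclic order, $e_L$ and $e_R$ cross, so the plane tree $T$ contains \emph{at most} one of them, and it remains to show it contains at least one.

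\emph{Reduction to a convex sub-polygon.} Let $F:=T[\{v_a,\dots,v_b\}]$ be the subgraph of $T$ induced on these $d+1$ vertices. Every vertex $v_j$ with $a<j<b$ has all its $T$-edges inside $\{v_a,\dots,v_b\}$, since any other edge at $v_j$ would cross $e$; thus the interior vertices reach the rest of $T$ only through $v_a$ or $v_b$, and together with $e\in E(F)$ this forces $F$ to be a plane spanning tree of the convex $(d+1)$-gon $v_av_{a+1}\cdots v_b$. By \Cref{prop:preliminary2}, exactly one of the $d$ boundary edges $v_av_{a+1},\dots,v_{b-1}v_b$ belongs to $T$, hence to $F$; together with the hull edge $e$ these are the only two hull edges of the $(d+1)$-gon in $F$. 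So the remaining $d-2$ edges of $F$ are diagonals of the $(d+1)$-gon, and being pairwise non-crossing they triangulate it (a convex $m$-gon admits at most $m-3$ pairwise non-crossing diagonals, with equality exactly for triangulations).

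\emph{Finding one of $e_L,e_R$.} In this triangulation the hull edge $e$ is a side of a unique triangle whose third vertex is some $v_t$ with $a<t<b$. If $t=a+1$ then $e_R=\{v_{a+1},v_b\}\in E(F)\subseteq E(T)$, and if $t=b-1$ then $e_L\in E(T)$; in either case we are done. Suppose instead $a+1<t<b-1$. Then $\{v_a,v_t\}$ and $\{v_t,v_b\}$ are edges of $F$, and repeating the crossing-and-connectivity argument (now additionally using $\{v_a,v_t\}\in E(T)$, resp.\ $\{v_t,v_b\}\in E(T)$, to confine edges) shows that $T$ restricted to $\{v_a,\dots,v_t\}$ and to $\{v_t,\dots,v_b\}$ are plane spanning trees of the respective sub-polygons, each of which has at least three vertices. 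A plane spanning tree on $m\ge 3$ convex points has at least two hull edges of its polygon (again since at most $m-3$ of its $m-1$ edges can be diagonals); since one of these is the ``long'' edge $\{v_a,v_t\}$, resp.\ $\{v_t,v_b\}$, which is a \emph{diagonal} of $BW_{k,\ell}$, at least one is a boundary edge of $BW_{k,\ell}$. Hence $T$ contains at least two of $v_av_{a+1},\dots,v_{b-1}v_b$, contradicting \Cref{prop:preliminary2}. So the case $a+1<t<b-1$ is impossible, and this completes the proof.

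\emph{Main obstacle.} Planarity alone does not yield the ``at least one'' direction — for instance the path $v_bv_av_{a+1}\cdots v_{b-1}$ is a plane tree containing $e$ but neither $e_L$ nor $e_R$ — so the argument must genuinely exploit that each diagonal carries exactly one boundary edge (\Cref{prop:preliminary2}); turning this into the triangulation/sub-polygon dichotomy above is the crux, with the bound ``at most $m-3$ pairwise non-crossing diagonals in a convex $m$-gon'' as the single external ingredient.
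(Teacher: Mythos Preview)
Your proof is correct, but it takes a genuinely different and more elaborate route than the paper's. The paper argues directly via the partial order $<_c$: pick a maximal edge $f$ among the edges of $T$ lying in $e^-$; if $\dist(f)\le d-2$, then $\spn(e,f)$ must contain a further non-radial edge $h$ of $T$ (to connect $f$ to $e$, or to reach a stranded vertex), and $h$ is incomparable to $f$ by maximality of $f$, so \Cref{lem:preliminary1}(\ref{lem:prel1:b}) forces two distinct boundary edges of $T$ in $e^-$, contradicting \Cref{prop:preliminary2}. That is the whole argument --- five lines, entirely inside the $<_c$ framework the paper uses throughout. You instead localize to the convex $(d{+}1)$-gon $v_a\cdots v_b$, show the induced subgraph is a spanning tree with exactly two hull edges, deduce that its $d-2$ diagonals triangulate the polygon, and then analyze the triangle incident to $e$ (recursing into sub-polygons when its apex is not $v_{a+1}$ or $v_{b-1}$). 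This works, and your identification of \Cref{prop:preliminary2} as the essential non-planarity input is exactly right; but it imports the ``at most $m-3$ non-crossing diagonals'' fact and a sub-polygon recursion that the paper's order-theoretic argument sidesteps entirely. In short: same reliance on \Cref{prop:preliminary2}, but the paper stays with $<_c$ and maximal edges, while you go through triangulations --- correct, self-contained, and arguably more combinatorial in flavor, though noticeably longer.
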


\begin{proof}%[Proof of \Cref{lem:continue_diagonal_edges}]
Let $f$ be a maximal edge among the edges of $T$ in~$e^-$ and assume $\dist(f) \leq d-2$. Then the span of $e$ and $f$ contains at least one (non-radial) edge $h \notin \{e,f\}$ of $T$ (since either $e$ and $f$ have no endpoint in common and therefore need to be connected via edges in $\spn(e,f)$ or $\spn(e,f)$ contains at least one vertex $v$ which is neither an endpoint of $e$ nor $f$ and needs to be connected to the rest of $T$). Since $f$ is maximal in $e^-$, $h$ and~$f$ must be incomparable. So, by \Cref{lem:preliminary1} (\ref{lem:prel1:b}) this forces at least two distinct boundary edges to be contained in $e^-$ (a contradiction to \Cref{prop:preliminary2}).

Therefore, we get $\dist(f) = d-1$ and since the two edges of distance $d-1$ in $e^-$ cross (or form a cycle with $e$ if $\dist(e) = 2$), $T$ cannot contain both.
\end{proof}

We need a little more preparation towards the proof of \Cref{thm:disprove_conjecture_general}. We call the first and last vertex of each group \emph{outmost vertices} (and the corresponding radial edges \emph{outmost radial edges}). Note that there are exactly $2k$ outmost radial edges in $BW_{k, \ell}$. Every hull vertex/radial edge that is not outmost, is called \emph{inside} vertex/radial edge. 

Furthermore, define two groups $\mathcal{G}_i, \mathcal{G}_j$ to be \emph{opposite} if $|i-j| = \frac{k-1}{2}$ or $|i-j| = \frac{k+1}{2}$. In particular, each group has two opposite groups and two consecutive groups have exactly one opposite group in common (we call that group \emph{the} opposite group of a pair of consecutive groups). 

Let $e,f$ be two maximal (non-crossing) diagonal edges which have an endpoint in a common group. Then the set of vertices of $\spn(e,f)$ in the common group is called \emph{apex}. Note that any apex contains at least one vertex (and this lower bound is attained if the endpoints of $e$ and $f$ coincide).

Moreover, two maximal (non-crossing) diagonal edges $e = \{u,v\}$ and $f = \{u',v'\}$ form a \emph{special wedge} if two endpoints (say $u$ and $u'$) are consecutive outmost vertices of different groups (that is, $u = v_{j\ell}$ and $u' = v_{j\ell+1}$ for some $j$) and $v$ and $v'$ are inside vertices lying in the opposite group of $\mathcal{G}_j$ and $\mathcal{G}_{j+1}$. See \Cref{fig:bw_general_counter_definitions} for an illustration of these terms.

\begin{figure}%[htb]
\centering
\includegraphics[width=0.35\textwidth,page=1]{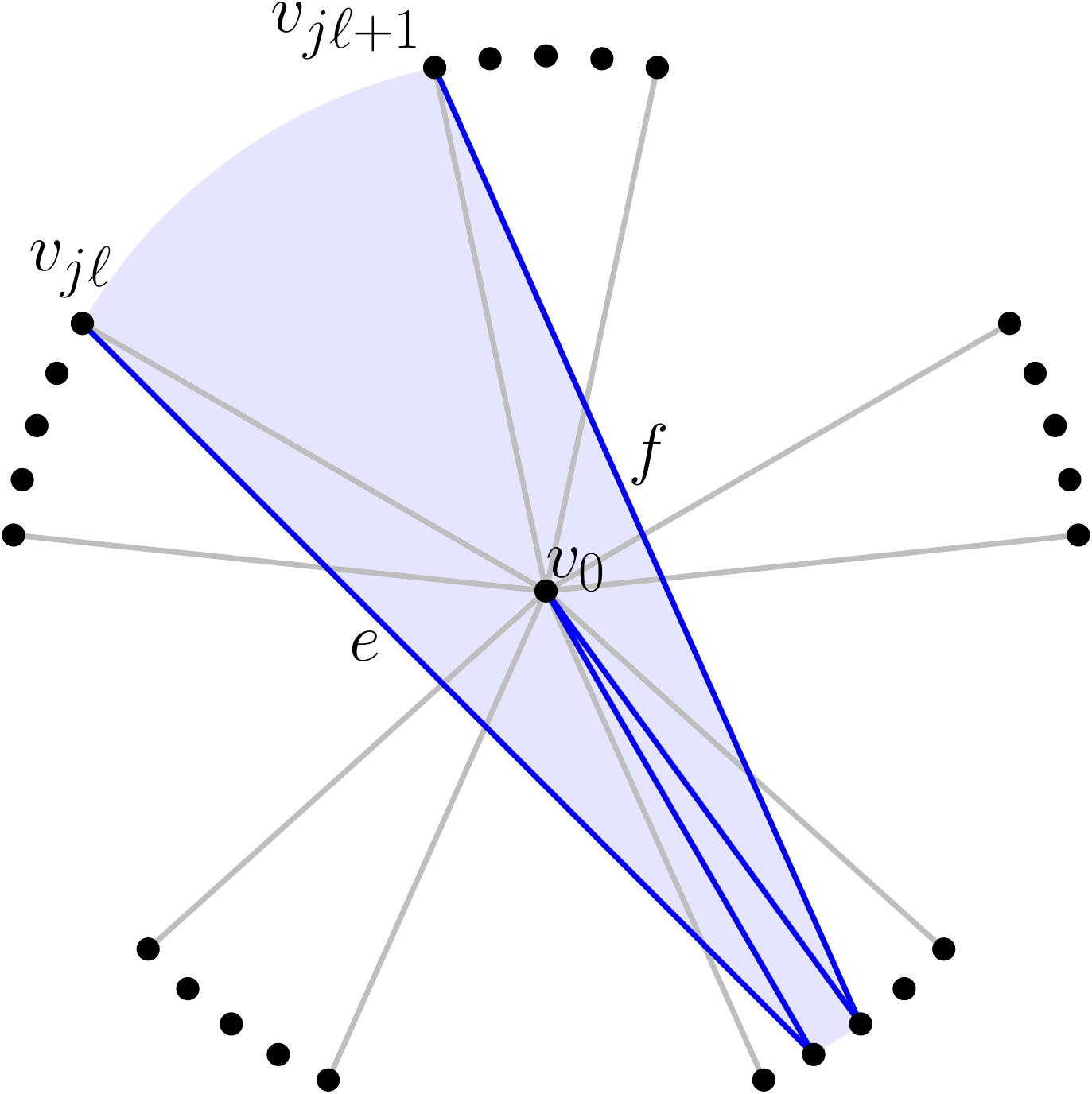}
\caption{All outmost radial edges are depicted in gray. The maximal diagonal edges $e$ and $f$ (connecting opposite groups) form a special wedge. Their span is shaded blue.}
\label{fig:bw_general_counter_definitions}
\end{figure}

\begin{proposition}\label{prop:no_outmost_radial}
Let $T_0, \ldots, T_{n-1}$ be a partition of $BW_{k,\ell}$ into plane spanning trees (if it exists) and let $T_i$ ($i\neq 0$) be a spanning tree that does not use any outmost radial edge. Then the two maximal diagonal edges $e$, $f$ of $T_i$ form a special wedge and $T_i$ has to use all radial edges incident to the apex of this wedge.
\end{proposition}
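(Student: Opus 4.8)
The plan is to analyze the structure forced on a spanning tree $T_i$ that uses no outmost radial edge, starting from what we already know: by \Cref{prop:preliminary2}, $T_i$ has exactly two maximal diagonal edges $e$ and $f$, and by \Cref{lem:bw_2_boundary} every radial edge of $T_i$ lies in $\spn(e,f)$ and every non-radial edge of $T_i$ inside $\spn(e,f)$ is $e$ or $f$ itself. Since the star around $v_0$ cannot appear in the partition, $T_i$ contains at least one radial edge; moreover the two boundary edges of $T_i$ lie in $e^-$ and $f^-$ respectively (again \Cref{prop:preliminary2}), so $e$ and $f$ are genuine diagonal edges connecting non-adjacent hull vertices. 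First I would argue that $e$ and $f$ must share an endpoint group, in fact must form an apex: the vertices of $\spn(e,f)$ that are not endpoints of $e$ or $f$ must be connected to the rest of $T_i$, and \Cref{lem:bw_2_boundary} says this can only happen via radial edges — but a vertex reached by a radial edge is itself an endpoint of an edge of $T_i$, so the connectivity of the tree forces the non-radial part of $\spn(e,f)$ to be exactly the path/structure $e \cup \{\text{radial edges}\} \cup f$ meeting at $v_0$. This already pins down that the endpoints of $e$ and $f$ on the "far" side (away from $v_0$) together with the radial spokes form a connected fan, which is only possible if the two far endpoints lie in a single group (the apex) or in two groups joined through $v_0$ — and a short case check using that $v_0 \in \spn(e,f)$ rules out everything except a common apex group.

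Next I would pin down where the other two endpoints of $e$ and $f$ sit. Call $e = \{u, v\}$ and $f = \{u', v'\}$ with $v, v'$ in the common apex group. Since $T_i$ uses no outmost radial edge, none of the radial spokes of $T_i$ goes to an outmost vertex; but the apex consists of consecutive vertices in one group, and the spokes incident to the apex are all in $T_i$ (they lie in $\spn(e,f)$ and are needed for connectivity), so the apex must avoid both outmost vertices of its group, which forces the apex vertices to be strictly interior — hence $v$ and $v'$ are inside vertices. Now consider $u$ and $u'$. Because $e$ and $f$ are maximal diagonal edges and must "reach back" far enough that $\spn(e,f)$ contains $v_0$, the groups of $u$ and $u'$ are constrained: $e^+$ must contain at least $\frac{k-1}{2}$ consecutive groups worth of vertices on one side and $f^+$ likewise on the other, and together with the apex in a single group this leaves $u$ and $u'$ as outmost vertices of two different groups that are "opposite" to the apex group — precisely the special-wedge configuration. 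I would make the counting explicit using the distance function $\dist$ and \eqref{eq:d_i}: the sum of the "reach" of $e$ on its side and of $f$ on its side must cover all of $BW_{k,\ell}$ outside $\spn(e,f)$, and since no outmost radial edge is available to patch things up, equality is forced and $u = v_{j\ell}$, $u' = v_{j\ell+1}$ for some $j$, with $v, v'$ in the opposite group of $\mathcal{G}_j$ and $\mathcal{G}_{j+1}$.

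Finally, once the special-wedge shape is established, the last assertion — that $T_i$ uses all radial edges incident to the apex — follows directly from \Cref{lem:bw_2_boundary} together with connectivity: each apex vertex lies in $\spn(e,f)$, is not an endpoint of $e$ or $f$ unless it equals $v$ or $v'$, and the only edges of $T_i$ inside $\spn(e,f)$ other than $e, f$ are radial, so every apex vertex (including $v$ and $v'$, which also need their group-internal structure to connect to $v_0$) must be joined to $v_0$ by a radial edge of $T_i$; conversely all these radial edges lie in $\spn(e,f)$ so they are consistent with planarity of $T_i$.

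The main obstacle I expect is the case analysis showing that $e$ and $f$ must meet in a \emph{common} apex group rather than, say, being incomparable edges whose spans meet only near $v_0$ with no shared group, or having endpoints spread over three groups. Ruling these out cleanly requires carefully combining three facts — that $v_0 \in \spn(e,f)$, that the only non-radial edges of $T_i$ inside $\spn(e,f)$ are $e$ and $f$, and that no radial edge of $T_i$ is outmost — and translating "the fan of spokes at $v_0$ must connect $e$ to $f$" into a statement about which hull vertices can be endpoints. The distance bookkeeping via $d_i = \frac{k+1}{2}\ell - i$ is the tool that should make this rigorous without an unwieldy enumeration, but setting up exactly the right inequality (and checking the tightness that forces outmost endpoints for $u, u'$ and inside endpoints for $v, v'$ simultaneously) is the delicate part.
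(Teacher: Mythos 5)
Your overall skeleton matches the paper's: start from \Cref{prop:preliminary2} and \Cref{lem:bw_2_boundary}, analyse the structure of $T_i$ inside $\spn(e,f)$, and deduce the special wedge plus the apex radial edges. The final step (all radial edges incident to the apex) is fine. However, the decisive middle step is not actually carried out, and the tool you propose for it would not deliver the conclusion. The paper's mechanism is an exact count inside the span: the subgraph of $T_i$ induced by $V(\spn(e,f))$ is a tree on $|V(\spn(e,f))|$ vertices whose only non-radial edges are $e$ and $f$, so $T_i$ uses \emph{all but exactly two} of the radial edges of the span; a span vertex whose radial edge is omitted must therefore be an endpoint of $e$ or $f$. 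Combined with the hypothesis that $T_i$ uses no outmost radial edge, this shows that \emph{every} outmost vertex of the span is such an endpoint, so the span contains at most two outmost vertices. On the other side, two maximal (hence incomparable, non-crossing) diagonal edges cannot join the same pair of groups, so one of the two hull arcs bounding the span connects endpoints in different groups and the span contains at least two outmost vertices. Equality then forces $u=v_{j\ell}$, $u'=v_{j\ell+1}$ to be consecutive outmost endpoints, the other arc to stay inside a single (opposite) group, and $v,v'$ to be inside vertices. Your "distance bookkeeping" via $d_i$ only controls \emph{how many} vertices lie in $e^-$, $f^-$ and the span; it cannot distinguish whether an endpoint sits at an outmost position or one step inside, so "equality is forced" does not follow from $\dist(e)+\dist(f)\le 2n-2$ alone.

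A second, related slip: you assert that connectivity together with $v_0\in\spn(e,f)$ already rules out the two maximal diagonal edges having endpoints in four different groups, up to "a short case check". That is not true without the no-outmost-radial hypothesis: a tree whose two maximal diagonal edges span four distinct groups, with all intermediate span vertices attached to $v_0$ by radial edges, is a perfectly valid plane spanning tree (and such trees do occur in actual partitions). You do list the no-outmost condition among the facts to be combined in your closing paragraph, but the argument that actually combines them --- the two-omitted-radial-edges count above --- is the missing idea, and it is precisely the content of the proposition rather than a routine verification.
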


\begin{proof}
We first argue that all but exactly two radial edges in $\spn(e,f)$ must be part of~$T_i$. The subgraph of $T_i$ induced by $V(\spn(e,f))$ needs to form a tree. Moreover, $\spn(e,f)$ contains $|V(\spn(e,f))| - 1$ radial edges. Since, $T_i$ uses the two diagonal edges $e,f \in E(\spn(e,f))$ and all other edges in the span need to be radial (\Cref{lem:bw_2_boundary}), it has to use exactly all but two radial edges.

Furthermore, since we cannot have two maximal diagonal edges between the same pair of groups, the span of $e$ and $f$ contains at least two outmost vertices, namely in two different groups which contain an endpoint of $e$ and $f$, respectively. On the other hand, $\spn(e,f)$ cannot contain any third outmost vertex and the two outmost vertices contained in $\spn(e,f)$ must be endpoints of $e$ and $f$, since $T_i$ otherwise has to use an outmost radial edge (by \Cref{lem:bw_2_boundary} and above argument). In particular, $e$ and $f$ share a common group and the apex does not contain any outmost vertex (hence, $e$ and $f$ form a special wedge, as depicted in \Cref{fig:bw_general_counter_definitions}). 
 
Moreover, since $T_i$ has to use all but two radial edges in the span, it clearly has to use all radial edges incident to the apex.
\end{proof}

Note that for two spanning trees $T_i$, $T_j$ ($i\neq j$) not using an outmost radial edge, their apexes must be disjoint.

\begin{proposition}\label{prop:one_max_edge_per_distance}
Let $T_0, \ldots, T_{n-1}$ be a partition of $BW_{k,\ell}$ into plane spanning trees (if it exists). Then for each pair $\mathcal{G}, \mathcal{G}'$ of opposite groups and each $j \in \{1,\ldots, \ell\}$ there is a unique diagonal edge (connecting $\mathcal{G}$ and $\mathcal{G}'$) of distance $d_j$ (recall \Cref{eq:d_i}) that is maximal in its tree.
\end{proposition}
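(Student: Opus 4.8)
The plan is to fix one pair $P = \{\mathcal{G},\mathcal{G}'\}$ of opposite groups and argue distance by distance. The diagonal edges connecting $P$ of distance $d_j$ are naturally indexed by pairs $(s,t) \in \{0,\ldots,\ell-1\}^2$ with $s+t = 2\ell-1-j$ (recording the offsets of the two endpoints inside their groups), so there are exactly $j$ of them for each $j \in \{1,\ldots,\ell\}$, their distance is $\frac{k-3}{2}\ell+1+s+t$, and $<_c$ restricted to the edges connecting $P$ is the coordinatewise order on these pairs. In particular two edges connecting $P$ with the same distance have their $\mathcal{G}$-endpoints within $\ell-1 < d_j$ positions of each other, so they cross; more generally, two edges connecting $P$ are either $<_c$-comparable or crossing. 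I will show that for each $j$ exactly one of the $j$ distance-$d_j$ edges connecting $P$ is maximal in its tree, which is the claim.

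First I would set up the structural backbone. In a single tree $T$ of the partition, the edges connecting $P$ are pairwise $<_c$-comparable (they cannot cross, as $T$ is plane), hence form a $<_c$-chain; its top $g^\ast$ is a \emph{maximal} diagonal edge of $T$, since a diagonal edge $e \in E(T)$ with $g^\ast <_c e$ would have $e^-$ containing $(g^\ast)^-$ and thus all $\frac{k-3}{2}$ full groups strictly between $\mathcal{G}$ and $\mathcal{G}'$, forcing $e$ (like $g^\ast$) to join exactly $\mathcal{G}$ and $\mathcal{G}'$ and contradicting that $g^\ast$ tops the chain. Moreover this $P$-chain is exactly the top part of the chain of \emph{all} diagonal edges of $T$ below $g^\ast$ obtained by iterating \Cref{lem:continue_diagonal_edges}: that chain is totally ordered (two incomparable diagonal edges below $g^\ast$ would, by \Cref{obs:maximal_edges_disjoint_vertices} and \Cref{prop:preliminary2}, place two distinct boundary edges inside $g^\ast$, impossible), and it leaves $P$ only once one endpoint has slipped out of $\mathcal{G}$ or $\mathcal{G}'$, never to return. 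Hence consecutive edges of $T$'s $P$-chain differ in distance by exactly $1$.

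With the backbone in hand, for $j \ge 2$ I would define a map $\phi$ on the \emph{non-maximal} distance-$d_j$ edges connecting $P$: send $g$, lying in a tree $T$, to the next edge $\phi(g)$ above it in the $P$-chain of $T$; by the backbone $\phi(g)$ connects $P$, has distance $d_{j-1}$, and is one of the (at most two) distance-$d_{j-1}$ edges that contain $g$ in their region and share an endpoint with $g$. This $\phi$ is well defined (the two candidate edges above $g$ are incomparable, so the plane tree $T$ cannot contain both); it is injective (if $\phi(g)=\phi(g')=h$ with $g \ne g'$, then $g,g'$ are precisely the two distance-$d_j$ edges nested inside $h$, and both lie in the tree of $h$, contradicting \Cref{lem:continue_diagonal_edges}); and it is onto the set of all $j-1$ distance-$d_{j-1}$ edges connecting $P$: given such an $h$ in a tree $T$, \Cref{lem:continue_diagonal_edges} produces a distance-$d_j$ edge of $T$ nested inside $h$, and — this is where $j \le \ell$ is used — the region of a distance-$d_{j-1}$ edge connecting $P$ still contains a vertex of $\mathcal{G}$ and a vertex of $\mathcal{G}'$, so both edges nested inside $h$ again connect $P$; thus the one in $T$ is a non-maximal $P$-edge with $\phi$-image $h$. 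Therefore $\phi$ is a bijection onto a set of size $j-1$, so exactly $j-1$ of the $j$ distance-$d_j$ edges connecting $P$ are non-maximal and exactly one is maximal; for $j=1$ the unique longest edge connecting $P$ is trivially maximal. The delicate part is the backbone — in particular that the $P$-edges of a tree form a gap-free chain whose top is a maximal diagonal edge — together with carefully tracking the "endpoint at the inner tip of its group" boundary cases, which is exactly the phenomenon that caps the statement at $j=\ell$.
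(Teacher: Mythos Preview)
Your proof is correct and follows essentially the same approach as the paper: both hinge on \Cref{lem:continue_diagonal_edges} to match the $j-1$ edges of distance $d_{j-1}$ between $\mathcal{G},\mathcal{G}'$ with $j-1$ of the $j$ edges of distance $d_j$ in the same tree, leaving exactly one $d_j$-edge maximal. Your presentation is more explicit in two places the paper leaves implicit --- that the top of the $P$-chain is a maximal diagonal edge of the \emph{whole} tree (not just among $P$-edges), and that for $j\le\ell$ both distance-$d_j$ edges nested below a distance-$d_{j-1}$ $P$-edge still connect $P$ --- but the underlying argument is the same.
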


\begin{proof}
Observe first that for any $j \in \{1,\ldots, \ell\}$ there are exactly $j$ edges of distance $d_j$ (between $\mathcal{G}$ and $\mathcal{G}'$) and all edges of the same distance (between $\mathcal{G}$ and $\mathcal{G}'$) pairwise cross. Also note, for any two edges $e$, $e'$ (between $\mathcal{G}$ and $\mathcal{G}'$) with $\dist(e) > \dist(e')$, either $e' \subseteq e^-$ holds or they cross. In particular, if they do not cross and belong to the same tree, the shorter is not a maximal edge.

Consider now for some $j \in \{2,\ldots, \ell\}$ the distance $d_j$ and let $c_1, \ldots, c_j$ be the colors\footnote{Instead of always spelling out that an edge belongs to a plane subgraph, we associate edges with colors.} used for all edges of this distance. By \Cref{lem:continue_diagonal_edges}, there are $j-1$ edges of (the larger) distance $d_{j-1}$ using the same color as an edge of distance $d_j$, w.l.o.g. $c_1, \ldots, c_{j-1}$. By the above arguments the corresponding edges of distance $d_j$ cannot be maximal.

On the other hand, the color $c_j$ cannot be used by any edge of larger distance, since again by \Cref{lem:continue_diagonal_edges} this color would have to appear in $d_{j-1}$ as well. Hence, indeed the only edge of distance $d_j$ that is maximal in its tree is the one of color $c_j$.

Lastly, for $j=1$ observe that the single edge of distance $d_1$ is clearly maximal.
\end{proof}

Finally, we are ready to prove \Cref{thm:disprove_conjecture_general}, which we restate here for the ease of readability: 

\disproveConjectureGeneral*

\begin{proof}
Assume to the contrary that there is such a partition $T_0, \ldots, T_{n-1}$. 
There are $2k$ outmost radial edges and $T_{0}$ uses (at least) $k$ of them (see the remark after \Cref{prop:preliminary2}).
Hence, there are at most $k+1$ spanning trees (including $T_0$) containing an outmost radial edge.

Next, let us count how many spanning trees \emph{not} containing an outmost radial edge we can have. Since, by \Cref{prop:no_outmost_radial}, the apex of such a tree cannot use any outmost vertex nor any vertex already incident to a radial edge in $T_0$, there remain $\frac{k+1}{2}(\ell - 2)$ possible vertices (to be used by apexes), namely the inside vertices of the last $\frac{k+1}{2}$ groups $\mathcal{G}_{\frac{k+1}{2}}, \ldots, \mathcal{G}_k$ (which are not fully connected to $v_0$ by radial edges in $T_0$). Also recall that each apex contains at least one vertex.

It is crucial to emphasize that among those last $\frac{k+1}{2}$ groups, group $\mathcal{G}_{\frac{k+1}{2}}$ and group $\mathcal{G}_{k}$ are opposite (the only opposite pair). Therefore, by \Cref{prop:no_outmost_radial}, two spanning trees with an apex in group $\mathcal{G}_{\frac{k+1}{2}}$ and group $\mathcal{G}_{k}$, respectively, must each have a maximal diagonal edge between these two groups. Hence, by \Cref{prop:one_max_edge_per_distance}, we can have at most $(\ell - 2)$ spanning trees with apex in one of these two groups (instead of $2(\ell - 2)$); see \Cref{fig:bw_most_general_counter}.

So, in total there can be at most $\frac{k-1}{2}(\ell - 2)$ spanning trees which do not use an outmost radial edge. Hence, whenever
\[
k + 1 + \frac{k-1}{2}(\ell - 2) < \frac{k\ell + 1}{2}
\]
holds, we cannot find enough spanning trees. Rearranging terms, this inequality is equivalent to $\ell > 3$.
\end{proof}

\begin{figure}%[htb]
\centering
\includegraphics[scale=0.4,page=1]{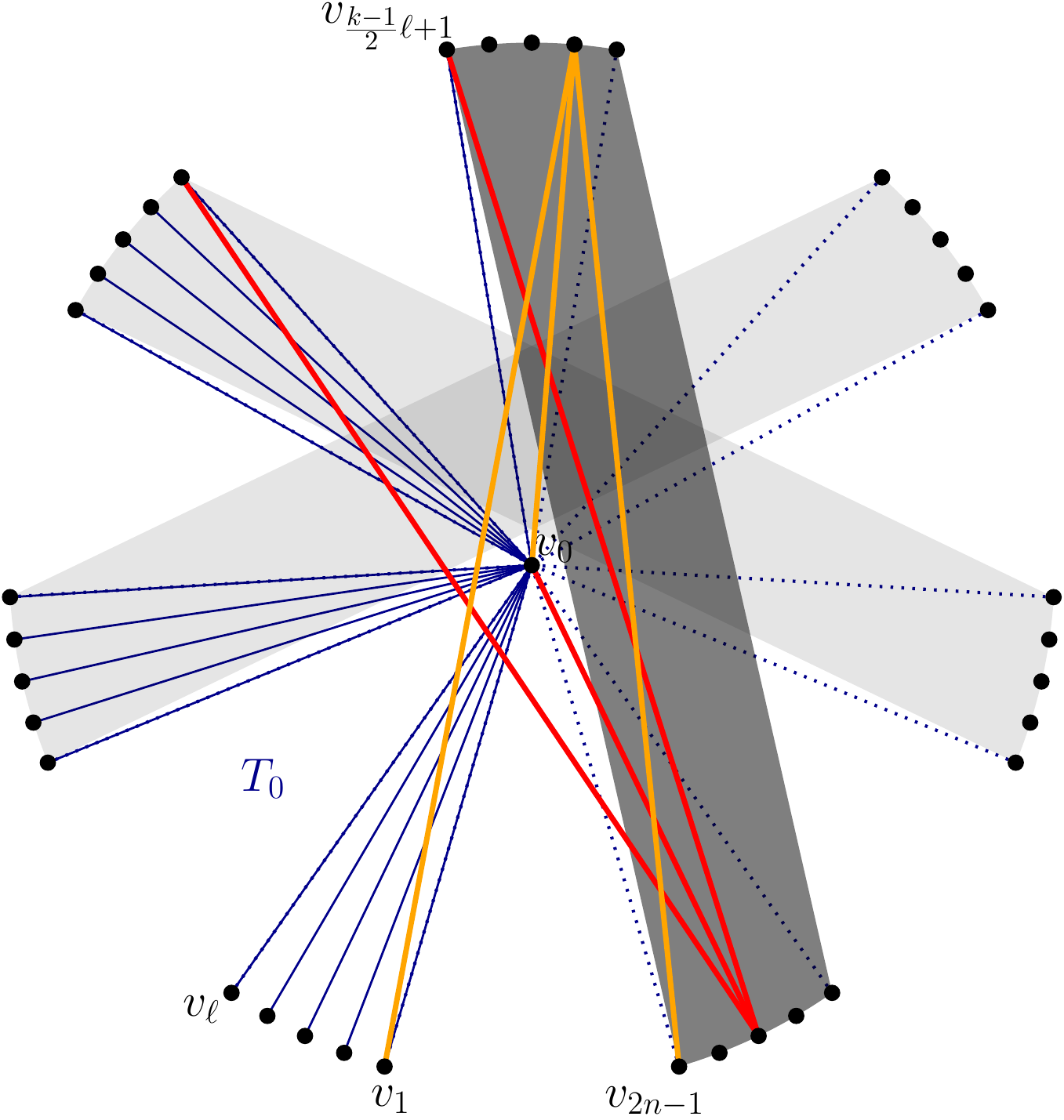}
\caption{In the black stripes (the darker one is the crucial one) the maximal diagonal edges (of those trees without outmost radial edge) need to have distinct distances. That allows $\ell - 2$ many for each stripe. Two spanning trees (red and orange) with apex in group $\mathcal{G}_\frac{k+1}{2}$ and group $\mathcal{G}_k$, respectively, both need to have a maximal diagonal edge in the dark stripe.}
\label{fig:bw_most_general_counter}
\end{figure}

Next, we prove the other direction of \Cref{thm:main_spanning_trees}:

\begin{restatable}{theorem}{thmBWKThreeClassification}\label{thm:bw_k3_classification}
For any odd $k \geq 3$, there are exactly \numPartitionsKThree non-isomorphic partitions of $BW_{k,3}$ into plane spanning trees.
\end{restatable}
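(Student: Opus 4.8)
The plan is to count the non-isomorphic partitions of $BW_{k,3}$ into plane spanning trees by combining the structural results already established with a careful bookkeeping argument. Recall that by \Cref{prop:preliminary2} one tree $T_0$ has a single boundary edge and a single maximal diagonal edge, while each of the other $n-1$ trees has exactly two boundary edges and exactly two maximal diagonal edges; and by \Cref{lem:continue_diagonal_edges,lem:bw_2_boundary} the shape of each tree is essentially determined by its maximal diagonal edges (a tree $T_i$ with $i \neq 0$ is a ``wedge'' whose span consists only of radial edges plus the two diagonal edges, and below each maximal diagonal edge the tree continues uniquely down to a boundary edge; moreover in each of the $k+1$ slots the radial edges of $T_i$ fill the span except for a bounded slack). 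Since $\ell = 3$, each group has three vertices, one inside vertex and two outmost vertices, which severely limits the combinatorial possibilities.

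First I would pin down, for $\ell = 3$, exactly which maximal-diagonal-edge configurations are feasible. By \Cref{prop:one_max_edge_per_distance}, for each pair of opposite groups and each $j \in \{1,2,3\}$ there is a unique maximal diagonal edge of distance $d_j$; since there are $k$ pairs of opposite groups this gives a rigid global skeleton of $3k$ maximal diagonal edges partitioned among the trees. Counting: $T_0$ contributes one, and the remaining $3k-1$ are distributed in pairs among $n-1 = \frac{3k-1}{2}$ trees, which checks out. The key local fact to extract is how the two maximal diagonal edges of a non-$T_0$ tree can be paired: they either share a common group (forming a wedge, possibly special) or not, and the apex/slack constraints from \Cref{prop:no_outmost_radial} together with the outmost-radial-edge accounting from the proof of \Cref{thm:disprove_conjecture_general} (now with $\ell = 3$, where the inequality becomes tight) force the configuration to be highly constrained. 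I expect that the feasible global configurations are parametrized by a small number of ``gadget'' choices made independently around the wheel, and that tracking how $T_0$'s single boundary edge and the handedness of each wedge can be chosen yields a product formula of the shape $4^{k-1} + 4^{k-2}$ (the additive term presumably coming from one exceptional placement of $T_0$, e.g.\ whether $T_0$'s boundary edge sits inside a group or between two groups, or whether $T_0$'s maximal diagonal edge connects opposite groups or is ``one off'').

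The concrete steps, in order, would be: (1) using $\ell=3$, enumerate the possible ``types'' of a non-$T_0$ tree by its pair of maximal diagonal edges and the distribution of its radial edges across the $k+1$ angular sectors, showing each type is rigid up to reflection; (2) show the global assignment of the $3k$ maximal diagonal edges to trees decomposes into $k$ (or $k-1$) local choices that can be made essentially independently, each with $4$ options (two binary choices, e.g.\ left/right orientation of a wedge and which of two edges at a given distance is used as the maximal one), giving the $4^{k-1}$ or $4^{k-2}$ factors; (3) separately handle the special role of $T_0$, determining how many inequivalent positions it can occupy and how that interacts with, or ``freezes'', one of the local choices, producing the split between the two powers of $4$; (4) finally, quotient by the isomorphisms of $BW_{k,3}$ — the dihedral symmetry of the wheel and the reflection within each group — to pass from labelled to non-isomorphic partitions, and verify the count reduces exactly to \numPartitionsKThree.

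The main obstacle I anticipate is step (2) combined with step (4): rigorously proving that the local choices are genuinely independent (no hidden global parity or cyclic obstruction linking distant sectors, of the kind that often appears in cyclic wheel-type constructions), and then correctly accounting for overcounting under the symmetry group so that the clean formula $4^{k-1}+4^{k-2}$ emerges rather than something off by a small factor or an additive constant. A secondary subtlety is making sure the base cases ($k=3$, and the relation to the known classification of partitions of the regular wheel $W_{k+1}$ and of convex sets in \cite{trao2019edge,ferran_2006}) are consistent with the formula, which provides a useful sanity check but also pins down the exact shape of the exceptional $T_0$-placement contributing the $4^{k-2}$ term.
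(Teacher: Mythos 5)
Your high-level plan points in the right direction (use \Cref{prop:preliminary2}, \Cref{prop:one_max_edge_per_distance}, \Cref{lem:continue_diagonal_edges} and \Cref{lem:bw_2_boundary} to pin down the skeleton of maximal diagonal edges, then count), but it contains a concrete error that would derail the count. You assert that ``below each maximal diagonal edge the tree continues uniquely down to a boundary edge.'' This is false as a counting statement: \Cref{lem:continue_diagonal_edges} says a fixed tree contains exactly one of the \emph{two} candidate edges of the next smaller distance, so in constructing a partition there is a genuine binary choice at each distance level. These choices are coupled globally (fixing left/right for one edge of distance $d_i$ forces the same choice for all $2n-1$ edges of that level), which yields one free bit per level below $d_\ell$ and hence a factor of $2^{\frac{k-1}{2}\ell-1}$ --- for $\ell=3$ this is $2^{3\frac{k-1}{2}-1}$, the dominant contribution to the total $4^{k-1}+4^{k-2}$. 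By declaring this continuation unique you lose exactly this factor, and no assignment of ``$4$ options per sector'' can recover it, since the surviving degrees of freedom (the choice of radial edges for each pair of Class~II trees) contribute only $2^{\frac{k-1}{2}}$ or $2^{\frac{k-3}{2}}$.

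The second gap is that the hard combinatorial core is left as a hope rather than an argument. The paper shows that the base configuration is forced almost everywhere: $T_0$ must take a distance-$d_1$ edge; the trees split into one exceptional tree $T_1$ (with a $d_2$ and a $d_3$ edge), $\frac{k-1}{2}$ trees with two $d_2$ edges, and $k-1$ trees pairing $d_1$ with $d_3$; a separate claim rules out one placement of a center radial edge; and the residual freedom collapses to exactly three base cases (one of which has one fewer free bit, producing the additive split $2\cdot 4^{k-1}/2 + 4^{k-2}$ rather than your guessed ``exceptional placement of $T_0$''). Your plan to instead enumerate ``gadget types,'' argue independence, and quotient by the dihedral symmetry at the end is not obviously workable: the choices are \emph{not} independent around the wheel (they are linked cyclically through \Cref{prop:one_max_edge_per_distance} and the pairing constraints of \Cref{lem:plane_subgraphs_index_sums}), and the paper avoids the symmetry quotient entirely by normalizing $T_0$'s position at the outset. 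Without the forced-configuration analysis and the extension-counting lemma, the formula cannot be derived.
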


Our construction consists of three steps. In the first step, we give explicit constructions of \emph{partial partitions} that cover all radial edges, each (partial) tree in the partition covers exactly its span, and between any pair of opposite groups exactly one diagonal edge of each distance $d_1, d_2, d_3$ is covered. After that we extend these partial partitions in two steps to a full partition (in fact, these extensions work for arbitrary $\ell$, however the partial partitions only exist for $\ell = 3$).

However, we believe it is more instructive to work through these steps in reverse order, that is, we first assume to have a partial partition, and show how to extend it.

\begin{lemma}\label{lem:full_extension}
Let $T_0, \ldots, T_{n-1}$ be a partial partition of the edges of $BW_{k,\ell}$ such that exactly all radial edges and all diagonal edges of distance $d_1, \ldots, d_{\ell}$ are covered, and such that all partial trees are connected, plane, and non-empty. Then, this partial partition can be extended to a partition of $BW_{k,\ell}$ into plane spanning trees. More precisely, there are $2^{\frac{k-1}{2}\ell - 1}$ such possible extensions.
\end{lemma}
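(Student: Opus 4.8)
The statement asks that any partial partition covering exactly the radial edges together with all diagonal edges of distances $d_1,\dots,d_\ell$ (with every partial tree plane, connected, non-empty) extends to a full partition into plane spanning trees, and that the number of such extensions is $2^{\frac{k-1}{2}\ell-1}$. My plan is to proceed \emph{distance by distance}, from the largest remaining distance $d_{\ell+1}$ down to the smallest non-radial distance, and at each step distribute the edges of that distance to the existing partial trees, showing that at every step each partial tree stays plane and connected, and that at the very end every partial tree becomes spanning and all boundary edges are used exactly once. The key structural input is \Cref{lem:continue_diagonal_edges}: once a tree contains a diagonal edge $e$ of distance $d>1$, it must contain exactly one of the two edges of distance $d-1$ lying in $e^-$. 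Combined with \Cref{prop:preliminary2} (each tree $T_i$, $i\neq 0$, has exactly two maximal diagonal edges and two boundary edges, and $T_0$ has one of each), this says that inside each $e^-$ the diagonal edges of that tree form a single descending chain with exactly two choices (left child / right child) at each step down — this is where the powers of two come from.

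First I would set up the bookkeeping: after the partial partition the uncovered edges are exactly the boundary edges and the diagonal edges of distances $d_{\ell+1},\dots,d_{\frac{k+1}{2}\ell-1}$ (i.e.\ distances $1,\dots,\frac{k-1}{2}\ell-1$ in the other convention), and I would observe that the number of diagonal edges of a fixed distance $d$ with $1\le d\le \frac{k-1}{2}\ell-1$ is exactly... well, more to the point, that the set of edges of distance exactly $d$ that are "below" (in $\cdot^-$) an already-placed edge of distance $d+1$ is determined, and each such lower edge has a forced assignment constraint coming from \Cref{lem:continue_diagonal_edges}. The induction hypothesis down to distance $d$ would be: the edges of distances $> d$ have been assigned so that each partial tree is plane and connected, and for each tree the diagonals it has so far form, inside each maximal edge's halfplane, a chain reaching down to distance $d+1$. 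For the inductive step at distance $d$, each edge $e$ of distance $d+1$ currently in some tree $T$ forces $T$ to take exactly one of the two distance-$d$ edges in $e^-$; I must check these forced choices are mutually consistent (no two trees claim the same distance-$d$ edge, no tree is asked to take two crossing edges) — this follows because the distance-$(d+1)$ edges of the various trees have pairwise disjoint $\cdot^-$ regions below the level where chains have split, by \Cref{obs:maximal_edges_disjoint_vertices} and planarity, so the distance-$d$ edges they reach into are disjoint. Each such choice is a genuine binary choice (the two children of $e$), contributing a factor of $2$, \emph{except} at the very bottom where the last step down (distance $2 \to 1$, i.e.\ reaching a boundary edge) is forced to avoid creating a cycle with $e$ when $\dist(e)=2$ — accounting carefully for exactly which steps are free gives the exponent $\frac{k-1}{2}\ell-1$. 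Finally, the boundary edges not yet used: once all diagonals are placed, each tree's set of boundary edges is exactly the minimal edges of its chains, and a counting argument (there are $2n-1$ boundary edges, $T_0$ needs one, the rest need two, matching \Cref{prop:preliminary2}) shows every boundary edge is used exactly once and every tree becomes spanning and connected.

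The main obstacle I anticipate is the \emph{consistency and exact-count} bookkeeping for the binary choices: I need to argue that the "two children" of $e$ at distance $d$ are always both available (not already assigned to another tree, and not forced in two incompatible ways), which requires a clean invariant describing, at each distance level, the partition of the distance-$d$ diagonal edges of each opposite-group pair among the trees — essentially showing that the chains hanging off distinct maximal edges never interfere, so the recursion really is an independent binary choice per internal node of a forest of chains. Pinning down precisely which nodes are "internal with two available children" (free, factor $2$) versus "forced" (the bottom boundary-reaching step, and possibly the apex/special-wedge constrained steps from \Cref{prop:no_outmost_radial}, \Cref{prop:one_max_edge_per_distance}) to land on the exponent $\frac{k-1}{2}\ell-1$ rather than something off by a constant is the delicate part; I would double-check it on the base case $BW_{k,3}$ against the $2^{k-1}$ count implied here (with $\ell=3$: $2^{\frac{k-1}{2}\cdot 3 - 1} = 2^{\frac{3k-5}{2}}$... so I'd verify this against \Cref{thm:bw_k3_classification}'s $4^{k-1}+4^{k-2}$ via the two further construction steps) and on $BW_{3,\ell}$ to make sure the arithmetic and the forced-step count are right before writing the general induction.
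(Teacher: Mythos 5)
Your high-level strategy (descend distance by distance from $d_{\ell+1}$ to $d_{\frac{k+1}{2}\ell-1}=1$, driven by \Cref{lem:continue_diagonal_edges}) is the same as the paper's, but the mechanism you propose for consistency and for the count is wrong, and this is exactly where the lemma lives. You model the assignment as ``an independent binary choice per internal node of a forest of chains'' and justify consistency by claiming that the distance-$(d+1)$ edges of the various trees have pairwise disjoint $\cdot^-$ regions, so their candidate children at distance $d$ are disjoint. That is false: the $2n-1$ edges of a fixed distance $d_i$ (for $i\ge\ell$) in circular order, say $e_j=\{v_j,v_{j+d_i}\}$, pairwise cross their neighbours, \Cref{obs:maximal_edges_disjoint_vertices} does not apply to them, and the \emph{right} child of $e_j$ is literally the \emph{same edge} as the \emph{left} child of $e_{j+1}$, namely $\{v_{j+1},v_{j+d_i}\}$. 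The children sets of consecutive parents overlap, so the per-edge choices are not independent. This overlap is not an obstacle to be worked around --- it is the whole point: since each of the $2n-1$ edges of distance $d_{i+1}$ must be claimed exactly once by the $2n-1$ parents, choosing ``right'' for $e_j$ forces ``right'' for $e_{j+1}$, and choosing ``left'' forces ``left'' for $e_{j-1}$; the choices propagate around the cycle, leaving exactly \emph{two} consistent global assignments per distance level (all-left or all-right). The exponent is then simply the number of levels, $\frac{k+1}{2}\ell-1-\ell=\frac{k-1}{2}\ell-1$, with no per-node bookkeeping and no forced exceptions.

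Two further points in your sketch are off. First, the bottom step (distance $2\to 1$, placing the boundary edges) is \emph{not} forced: \Cref{lem:continue_diagonal_edges} only rules out taking \emph{both} boundary edges below a distance-$2$ edge (they would close a triangle), so that level still carries a free global factor of $2$, handled uniformly with the others. Second, \Cref{prop:no_outmost_radial} and \Cref{prop:one_max_edge_per_distance} play no role here; they constrain the maximal diagonal edges and radial edges, which are already fixed by hypothesis, whereas the extension only touches edges strictly below distance $d_\ell$. Planarity and acyclicity of each extension are immediate because every newly added edge $e_j^o$ lies in $e_j^-$, where no edge of its tree was present before. As it stands, your counting model would not produce the stated exponent, and your consistency argument has a genuine hole; you need the cyclic-propagation observation to close both.
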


\begin{proof}
First of all note that for any $i \geq \ell$, there are exactly $2n-1$ edges of distance $d_i$. So, assume that all edges of distance $d_i$ (for some $i \geq \ell$) are covered, then we want to cover all edges of distance $d_{i+1}$ (the next \emph{smaller} distance). To this end, consider all edges $e_1, e_2, \ldots, e_{2n-1}$ of distance $d_i$ in clockwise circular order (around the center vertex $v_0$). By \Cref{lem:continue_diagonal_edges}, any edge $e_j$ (of distance $d_i$) has an edge of distance $d_{i+1}$ in $e_j^-$, that is, either the one attached to the \enquote{left} endpoint or the one attached to the \enquote{right} endpoint of $e_j$ (viewed from $v_0$). And both choices are possible because no edge of distance $d_{i+1}$ is covered yet. However, note that 
choosing \enquote{left} for the edge $e_j$ also fixes \enquote{left} for the edge $e_{j-1}$ and choosing \enquote{right} for the edge $e_j$ also fixes \enquote{right} for the edge $e_{j+1}$ (see \Cref{fig:bw_k3_proof} for an illustration). Therefore, fixing the choice for one edge, fixes the choice for all edges of distance $d_i$ (because there are equally many edges of distance $d_i$ and $d_{i+1}$).

\begin{figure}
\centering
\begin{subfigure}{.3\textwidth}
\centering
\includegraphics[width=\textwidth,page=1]{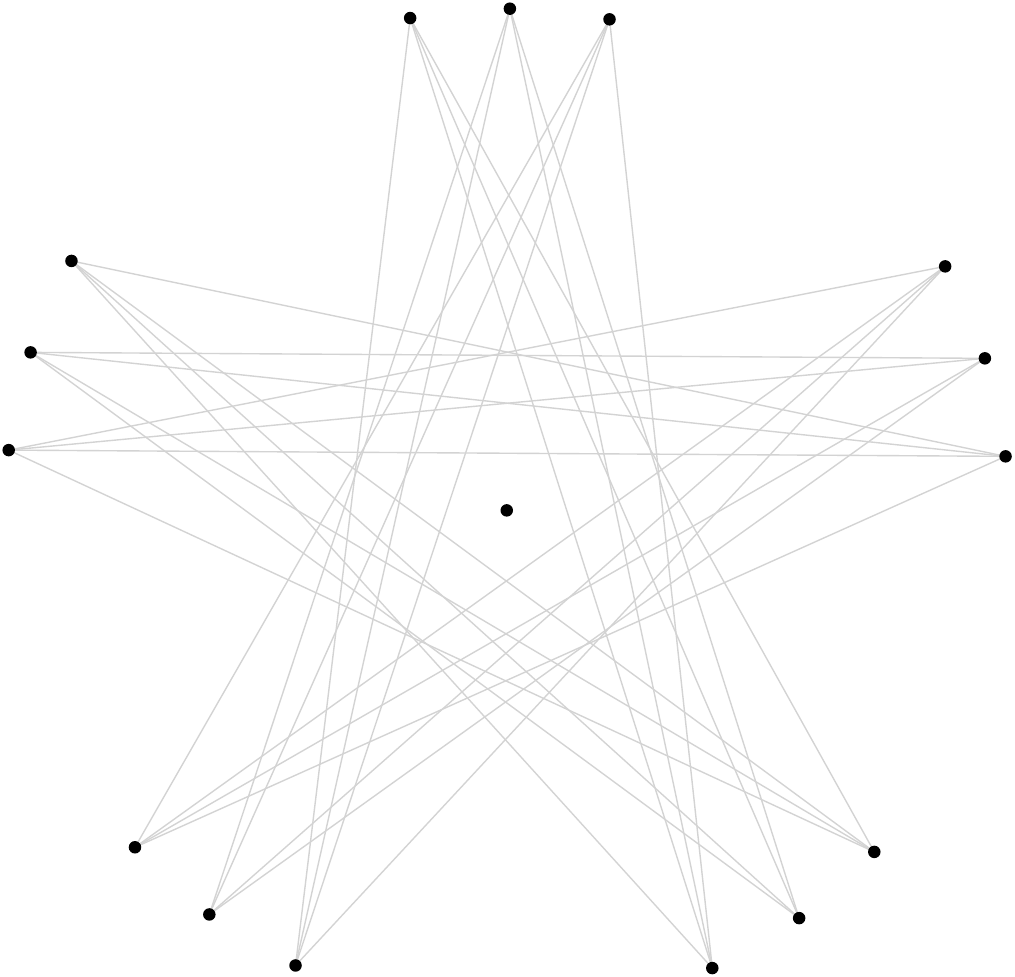}
\caption{\centering}
\label{fig:bw_k3_proof_a}
\end{subfigure}\hspace{2cm}%
\begin{subfigure}{.3\textwidth}
\centering
\includegraphics[width=\textwidth,page=2]{bw_k3_proof}
\caption{\centering}
\label{fig:bw_k3_proof_b}
\end{subfigure}
\caption{(a) All diagonal edges of distance $d_1,\ldots,d_\ell$. (b) All solid edges are of distance $d_i$ and already covered. If the dashed black edge (of distance $d_{i-1}$) is attached to the left endpoint of the blue edge, the red edge has only one choice, namely to also attach the next edge (dashed gray) to the left endpoint.}
\label{fig:bw_k3_proof}
\end{figure}

To be precise, at the start of each iteration, fix an orientation $o~\in~\{\text{left}, \text{right}\}$. Now, for each edge $e_j$ of distance $d_i$ attach to $e_j$ the edge $e_j^o$, which is the edge of distance $d_{i+1}$ attached to the left/right endpoint of $e_j$ (depending on $o$ and viewed from $v_0$). Since there are equally many edges of distance $d_i$ and $d_{i+1}$, every edge of distance $d_{i+1}$ will clearly be added to exactly one tree. And because the partial trees so far are connected, plane, and non-empty, also the extensions will be (still being cycle-free and plane follows from the fact that no edge in $e_j^-$ was covered before, so any edge crossing $e_j^o$ would also cross $e_j$). Continue this process until distance~$d_{i+1}=d_{\frac{k+1}{2}\ell-1}=1$ (having $2$ choices in each step, independently). Then all edges are covered and, since we started with $n$ non-empty partial trees, all $T_i$'s form plane spanning trees.
\end{proof}

\begin{lemma}\label{lem:base_extension}
Let $T_0, \ldots, T_{n-1}$ be a partial partition of the edges of $BW_{k,\ell}$ such that 
\begin{enumerate}[(a)]
\item\label{item:base_extension_a} all radial edges are covered,
\item\label{item:base_extension_b} for each pair of opposite groups, there is exactly one diagonal edge of each distance $d_1, \ldots, d_\ell$ covered, and no further diagonal edges are covered,
\item\label{item:base_extension_c} each of the covered diagonal edges is maximal in its partial tree, and
\item\label{item:base_extension_d} each partial tree is connected, plane, and non-empty.
\end{enumerate}
Then, there is a unique way to extend this partial partition to one that covers all diagonal edges of distance $d_1,\ldots,d_\ell$.
\end{lemma}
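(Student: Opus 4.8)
The plan is to treat one pair of opposite groups at a time and to exploit the ``triangular fan'' that the diagonal edges between such a pair form. First I would observe, as in the proof of \Cref{prop:one_max_edge_per_distance}, that among all edges of distance $d_1,\dots,d_\ell$ only edges between opposite groups occur: an edge of distance $d_j$ with $j\le\ell$ has at least $d_\ell-1=\tfrac{k-1}{2}\ell-1$ vertices in $e^-$, which forces $e^-$ to span exactly $\tfrac{k-1}{2}$ or $\tfrac{k+1}{2}$ consecutive groups, and in either case the two endpoints of the edge lie in a pair of opposite groups. Fixing such a pair $\mathcal{G},\mathcal{G}'$, with vertices $a_1,\dots,a_\ell$ of $\mathcal{G}$ and $b_1,\dots,b_\ell$ of $\mathcal{G}'$ listed clockwise, there are exactly $j$ edges of distance $d_j$ between them for each $j\in\{1,\dots,\ell\}$; writing $(j,s)$ for the one with $a$-endpoint $a_s$ (so $s\in\{1,\dots,j\}$), a short computation gives that for $j<\ell$ the two edges of distance $d_{j+1}$ inside $(j,s)^-$ are exactly $(j+1,s)$ and $(j+1,s+1)$, and (as already noted in the proof of \Cref{prop:one_max_edge_per_distance}) that two distinct such edges are either nested, one in the $e^-$ of the other, or cross. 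Hence a plane set of these edges is precisely a $<_c$-chain in this parent--child structure, and by \Cref{lem:continue_diagonal_edges} and \Cref{prop:one_max_edge_per_distance} all relevant parent/child incidences stay inside one fan, so the extension can be carried out independently per fan; it suffices to treat one.

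Next I would set up the skeleton. In the given partial partition, for the fixed fan exactly one edge $(j,\sigma_j)$ of each distance $d_j$ is covered (with $\sigma_1=1$), it is maximal in its partial tree, and since any two of these covered edges are comparable they lie in $\ell$ pairwise distinct partial trees $T^{(1)},\dots,T^{(\ell)}$. Any extension that --- together with \Cref{lem:full_extension} --- completes to a partition into plane spanning trees makes each $T^{(j)}$ a subgraph of a plane spanning tree, so \Cref{lem:continue_diagonal_edges}, applied repeatedly (using $d_j>1$ for $j\le\ell$), forces the fan-edges of $T^{(j)}$ to form a descending chain that starts at $(j,\sigma_j)$ and meets exactly one edge of each distance $d_j,d_{j+1},\dots,d_\ell$, hence has length $\ell-j+1$. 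These $\ell$ chains together use $\ell+(\ell-1)+\cdots+1=\binom{\ell+1}{2}$ edges, which is exactly the number of fan-edges of distance $d_1,\dots,d_\ell$; so they \emph{partition} the fan. In particular no other tree meets the fan, and for each $i\le\ell$ the $i$ edges of distance $d_i$ are precisely the ``row-$i$'' members of the chains rooted at $(1,\sigma_1),\dots,(i,\sigma_i)$, so these trees occupy all $i$ positions of row $i$.

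It then remains to show that the partition of the fan into these $\ell$ chains is uniquely determined by $\sigma_1,\dots,\sigma_\ell$. I would process the distances $d_1,d_2,\dots,d_\ell$ in order and maintain the invariant that after distance $d_i$ the edges $(i,1),\dots,(i,i)$ are assigned to $T^{(1)},\dots,T^{(i)}$, one each, with $(i,\sigma_i)\in T^{(i)}$. In the step $d_i\to d_{i+1}$, the new covered edge $(i+1,\sigma_{i+1})$ takes position $\sigma_{i+1}$; if $\sigma_{i+1}\le i$ then the tree at position $\sigma_{i+1}$ of row $i$ cannot use the child at position $\sigma_{i+1}$ and is forced onto the child at position $\sigma_{i+1}+1$, which forces the tree at position $\sigma_{i+1}+1$ onto position $\sigma_{i+1}+2$, and so on (a rightward cascade); symmetrically, every tree at a position $<\sigma_{i+1}$ in row $i$ must keep its position, since moving right would trigger a cascade of rightward moves that eventually collides with the edge just placed at $\sigma_{i+1}$. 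Because the $i$ trees meeting row $i$ cover exactly $\{1,\dots,i\}$, every choice in this step is forced, the new row $i+1$ is again fully occupied by $T^{(1)},\dots,T^{(i+1)}$, and the invariant persists. Each added edge $f$ lies in $e^-$ of the tree's previous fan-edge $e$ and shares an endpoint with $e$, so the tree stays plane, connected and acyclic; hence after reaching distance $d_\ell$ every fan-edge of distance $d_1,\dots,d_\ell$ has been placed in a unique way, and carrying this out in every fan produces the claimed unique extension.

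The main obstacle is the cascade argument: one must verify that at every step of every fan exactly one of the two children of each ``active'' edge is available. This is exactly what the ``all $i$ positions of row $i$ are occupied'' invariant guarantees, so the length count of the second step --- and hence the (legitimate, since we are extending towards a full partition) use of \Cref{lem:continue_diagonal_edges} for the intermediate partial trees --- is doing the real work here.
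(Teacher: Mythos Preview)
Your approach is essentially the same as the paper's: both treat each pair of opposite groups independently, process the distances from $d_1$ down to $d_\ell$, and use the observation that the single pre-placed edge at each level blocks one of the two choices for its neighbour(s), triggering a bidirectional cascade that forces every remaining assignment. Your presentation is more explicit (the $(j,s)$ indexing and the row/position invariant), and you route through \Cref{lem:continue_diagonal_edges} together with \Cref{lem:full_extension} to justify in advance that every tree meeting the fan must extend down to row~$\ell$ --- a point the paper leaves implicit --- but the substance of the argument is identical.
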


\begin{proof}
The proof is very similar to the one of \Cref{lem:full_extension}, with the difference that we need to go through the distances in each pair of opposite groups separately. So, let $\mathcal{G}, \mathcal{G}'$ be a pair of opposite groups and consider some distance $d_i$ with $i \in \{2,\ldots, \ell\}$ (assuming all edges of larger distance between this pair are already covered). Note that by assumption~(b), the one edge of distance $d_1$ is covered (providing a base case), and there is exactly one edge $e$ of distance $d_i$ already covered and $e$ is maximal in its partial tree $T_k$ by assumption~(c) (that is, no edge of distance $d_{i-1}$ between $\mathcal{G}$ and $\mathcal{G}'$ belongs to $T_k$). Now, at least one of the endpoints of $e$ is incident to an edge $f$ of distance $d_{i-1}$ between $\mathcal{G}$ and $\mathcal{G}'$ (it could be both, then name the other $f'$). More precisely, let $f$ be incident to the left endpoint of $e$ (viewed from $v_0$) and colored blue, and $f'$ be incident to the right endpoint of $e$ (viewed from $v_0$) and colored red. The crucial observation is that $e$ now blocks the \enquote{left} extension of the blue tree as well as the \enquote{right} extension of the red tree (viewed from $v_0$). Therefore the red and the blue tree have a unique extension and by using those, they subsequently fix the orientation of the extension for all further trees with an edge of distance $d_{i-1}$ between $\mathcal{G}$ and $\mathcal{G}'$ (to \enquote{left} for all edges to the left of the red edge $f'$ and to \enquote{right} for all edges to the right of the blue edge $f$ --- see \Cref{fig:bw_k3_continuation} for an illustration). Finally, since there is exactly one more edge of distance $d_i$ than of distance $d_{i-1}$ between $\mathcal{G}$ and $\mathcal{G}'$, this uniquely determines the color for all edges of distance $d_i$. And similar to the proof of \Cref{lem:full_extension}, all extended partial trees are still connected, plane, and non-empty.
\end{proof}

\begin{figure}
\centering
\begin{subfigure}{.3\textwidth}
\centering
\includegraphics[width=\textwidth,page=1]{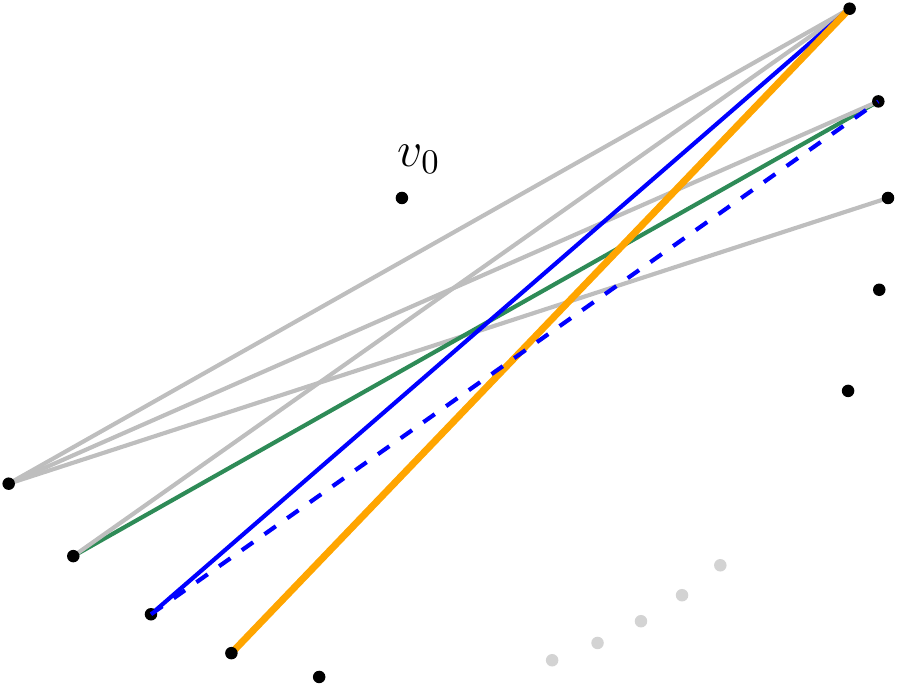}
\caption{\centering}
\label{fig:bw_k3_continuation_a}
\end{subfigure}\hspace{2cm}%
\begin{subfigure}{.3\textwidth}
\centering
\includegraphics[width=\textwidth,page=2]{bw_k3_continuation}
\caption{\centering}
\label{fig:bw_k3_continuation_b}
\end{subfigure}
\caption{All edges up to distance $d_{i-1} = d_3$ are covered (solid, non-orange). The orange edge of distance $d_i$ is already fixed. In (a) it is incident to one previous edge (blue), in (b) it is incident to two previous edges (red and blue). The dashed edges mark the forced continuations (of red and blue) and the green edge is the next in the circular order (whose orientation is then also fixed).}
\label{fig:bw_k3_continuation}
\end{figure}

Note that the properties of \Cref{lem:base_extension} mean that, in order to prove only the existence of \Cref{thm:bw_k3_classification}, we need to provide a base construction where we describe exactly the span of each spanning tree (and for $T_0$ all edges in $\cl(e^+)$ of its single maximal diagonal edge $e$). We, however, give a full characterization of all possible such partial partitions of $BW_{k,3}$ and show that there are \numPartitionsKThree non-isomorphic partitions into plane spanning trees in total.

Let us first show that there cannot be any other partitions, that is, any partition has to fulfill the properties of \Cref{lem:base_extension}:

\begin{lemma}
Let $T_0, \ldots, T_{n-1}$ be a partition of the edges of $BW_{k,3}$ into plane spanning trees. Then, there are subtrees $T'_0, \ldots, T'_{n-1}$ fulfilling the properties of \Cref{lem:base_extension}.
\end{lemma}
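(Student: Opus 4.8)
The plan is to obtain the subtrees by pruning. For each $i$, let $T'_i$ consist of all radial edges of $T_i$ together with all diagonal edges of $T_i$ that are maximal with respect to $<_c$; equivalently, $T'_i$ is $T_i$ with all boundary edges and all non-maximal diagonal edges removed. It then remains to verify conditions (a)--(d) of \Cref{lem:base_extension}. Conditions (a) and (c) are immediate: since $T_0,\ldots,T_{n-1}$ partition all edges of $BW_{k,3}$, every radial edge is retained in some $T'_i$, which is (a); and a diagonal edge kept in $T'_i$ is maximal among the diagonal edges of $T_i\supseteq T'_i$, hence also among those of $T'_i$ (recall $<_c$ is never applied to radial edges), which is (c).

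For (b) I would use a counting argument. By \Cref{prop:preliminary2}, the partition contains exactly $1+2(n-1)=2n-1=3k$ maximal diagonal edges. On the other hand there are exactly $k$ pairs of opposite groups, and by \Cref{prop:one_max_edge_per_distance} (applied with $\ell=3$), for each such pair and each $j\in\{1,2,3\}$ there is a unique diagonal edge of distance $d_j$ between that pair which is maximal in its tree. Since a diagonal edge is determined by the pair of groups it joins together with its distance, these $3k$ edges are pairwise distinct; as they are maximal diagonal edges and equal in number to the total, they are \emph{all} of the maximal diagonal edges. Hence $\bigcup_i E(T'_i)$ contains, among its diagonal edges, exactly one edge of each of the distances $d_1,d_2,d_3$ between every pair of opposite groups and no other diagonal edges, which is precisely (b).

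The main work is condition (d); planarity and non-emptiness of each $T'_i$ are clear (it is a nonempty subgraph of the plane graph $T_i$), so only connectivity needs an argument, and for this I would show that the edges deleted from $T_i$ form pendant subtrees. Fix $i\neq 0$ with maximal diagonal edges $e,f$. By \Cref{lem:bw_2_boundary}, all radial edges of $T_i$ lie in $\spn(e,f)$ while the only non-radial edges of $T_i$ there are $e$ and $f$, so both boundary edges of $T_i$ lie in $e^-\cup f^-$; moreover every diagonal edge $h\in E(T_i)\setminus\{e,f\}$ satisfies $h<_c e$ or $h<_c f$, since it cannot lie in $\spn(e,f)$ and if it were incomparable to both $e$ and $f$ then \Cref{lem:preliminary1} (\ref{lem:prel1:b}) would force three distinct boundary edges in $T_i$, contradicting \Cref{prop:preliminary2}. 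Applying \Cref{lem:continue_diagonal_edges} repeatedly, and using that $e^-$ contains exactly one boundary edge of $T_i$ (\Cref{prop:preliminary2}), one checks that the edges of $T_i$ inside $e^-$ form a single descending chain of diagonal edges ending in that boundary edge, attached to the rest of $T_i$ only at an endpoint of $e$; symmetrically for $f$. These two pendant chains (vertex-disjoint by \Cref{obs:maximal_edges_disjoint_vertices}) are exactly the boundary edges and non-maximal diagonal edges of $T_i$, so removing them from the tree $T_i$ leaves $T'_i$ connected. The case $i=0$ is analogous, with a single maximal diagonal edge $e$ and a single pendant chain inside $e^-$ (every radial edge of $T_0$ lies in $\cl(e^+)$, since a radial edge to a vertex of $e^-$ would cross $e$), so $T'_0$ reduces to $e$ together with the radial edges of $T_0$ and is connected. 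The only delicate point is establishing this pendant-chain structure; once it is in place, conditions (a)--(c) are routine bookkeeping on top of \Cref{prop:preliminary2} and \Cref{prop:one_max_edge_per_distance}.
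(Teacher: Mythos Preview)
Your proof is correct and follows essentially the same approach as the paper: both define $T'_i$ as the radial plus maximal diagonal edges of $T_i$, dispatch (a) and (c) immediately, derive (b) from \Cref{prop:preliminary2} and \Cref{prop:one_max_edge_per_distance}, and obtain connectedness in (d) from the fact that everything removed lies in $e^-\cup f^-$ and attaches only through endpoints of $e,f$. Your pendant-chain analysis for (d) is more detailed than necessary---the paper simply invokes \Cref{lem:bw_2_boundary} to note that $T'_i$ is exactly the restriction of $T_i$ to its span and hence a connected subtree---but the extra structure you establish is correct and does no harm.
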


\begin{proof}
Define the subtrees $T'_i$ to consist only of the radial and maximal diagonal edges (of the respective $T_i$). Properties (\ref{item:base_extension_a}) and (\ref{item:base_extension_c}) are clearly fulfilled. Moreover, all $T_i$'s are non-empty and plane, and connectedness follows from the fact that the maximal diagonal edges define the span and within each span there can only be radial edges (\Cref{lem:bw_2_boundary}). Hence, property (\ref{item:base_extension_d}) holds as well. Finally, the first part of property (\ref{item:base_extension_b}) holds by \Cref{prop:one_max_edge_per_distance} and by \Cref{prop:preliminary2} there are no further maximal diagonal edges.
\end{proof}

So we have one maximal diagonal edge of distance $d_1$, $d_2$, and $d_3$ each between every pair of opposite groups. And we need to pair two of them in each tree (except for $T_0$, that only contains one such edge).
The following lemma, which we state in more general terms of subgraphs since we need it that way later (\Cref{sec:subgraphs}), gives a restriction on which edges we can pair.

\begin{lemma}\label{lem:plane_subgraphs1}
Let $e_1, \ldots, e_m$ be pairwise (non-crossing) incomparable edges of a plane subgraph of $BW_{k,\ell}$. Then, $\sum_i \dist(e_i) \leq 2n - 1\ (=k \ell)$ holds. Moreover, for \emph{two} (non-crossing) incomparable edges $e_1,e_2$
\[
\dist(e_1) + \dist(e_2) \leq 2n - 2
\]
holds.
\end{lemma}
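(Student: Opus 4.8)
The plan is to argue that the halfplanes $e_i^-$ are "almost disjoint" — their interiors are disjoint and none contains an endpoint of another $e_j$ — and then count vertices. Since each $e_i$ is non-radial, $e_i^-$ contains $\dist(e_i)-1$ vertices of the point set; the center $v_0$ lies in none of them; and the endpoints of the edges are shared sparingly. Turning these observations into the bound $\sum_i \dist(e_i) \le k\ell$ is then a matter of charging vertices carefully.

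First I would invoke \Cref{obs:maximal_edges_disjoint_vertices}: for any two of the pairwise non-crossing, incomparable edges $e_i, e_j$, the vertex sets of $e_i^-$ and $e_j^-$ are disjoint, and $e_i^-$ contains no endpoint of $e_j$ (and vice versa). Consequently the sets $V(e_i^-)$ are pairwise disjoint, and none of them contains $v_0$ (as $e_i$ is non-radial). This already gives $\sum_i (\dist(e_i)-1) = \sum_i |V(e_i^-)| \le 2n-1 - 1 = 2n-2$, since the $2n-1$ hull vertices are the only candidates to lie in some $e_i^-$. To get the sharper $\sum_i \dist(e_i) \le k\ell = 2n-1$, I would account for the endpoints: each edge $e_i$ has two endpoints, which are hull vertices not lying in any $e_j^-$; I want to show that these $2m$ endpoint-slots, together with the $\sum_i|V(e_i^-)|$ interior vertices, inject into the $2n-1$ hull vertices in a way that leaves enough slack. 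The cleanest route is to observe that the closed regions $\cl(e_i^-)$ cover all $\dist(e_i)$ boundary edges in $\cl(e_i^-)$, these boundary-edge sets are pairwise disjoint (two boundary edges coincide only if some $e_j^-$ contains an endpoint or edge of another, contradicting incomparability/non-crossing), and there are exactly $2n-1 = k\ell$ boundary edges in $BW_{k,\ell}$ in total; hence $\sum_i \dist(e_i) \le k\ell$.

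For the two-edge refinement, I would argue that if $e_1, e_2$ are incomparable and non-crossing, then their closed regions $\cl(e_1^-)$ and $\cl(e_2^-)$ together cannot exhaust all $2n-1$ boundary edges: the span $\spn(e_1,e_2) = \cl(e_1^+ \cap e_2^+)$ contains $v_0$ and hence at least two radial-adjacent hull vertices that lie strictly outside both $\cl(e_1^-)$ and $\cl(e_2^-)$; more concretely, there is at least one boundary edge not in $\cl(e_1^-) \cup \cl(e_2^-)$ (the edges incident to $v_0$'s "side" of the configuration), so $\dist(e_1) + \dist(e_2) \le (2n-1) - 1 = 2n-2$. I would make this precise by noting that since $e_1$ and $e_2$ are incomparable, $e_1^+ \cap e_2^+$ is a nonempty open region containing $v_0$ and at least $\frac{k-1}{2}\ell \ge 1$ hull vertices, none of which lies in $\cl(e_1^-) \cup \cl(e_2^-)$; restricting to boundary edges between consecutive such vertices gives the missing boundary edge.

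The main obstacle I anticipate is the bookkeeping around \emph{shared endpoints} and the precise claim that the boundary-edge sets $\{\text{boundary edges in } \cl(e_i^-)\}$ are pairwise disjoint — \Cref{obs:maximal_edges_disjoint_vertices} gives disjointness of open regions and the no-endpoint-inside property, but one must check that two edges $e_i, e_j$ sharing a common endpoint $u$ cannot have a common boundary edge in $\cl(e_i^-) \cap \cl(e_j^-)$. This follows because such a shared boundary edge would have both its endpoints in $\cl(e_i^-) \cap \cl(e_j^-)$, and at least one of them would lie in the open region $e_i^-$ or $e_j^-$ (using that $e_i, e_j$ don't cross and $u$ is the only possible shared vertex), contradicting the observation — but spelling this out carefully, especially distinguishing the case where the shared boundary edge is itself incident to the common endpoint $u$, is the fiddly part of the argument.
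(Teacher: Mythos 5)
Your argument for the first inequality is sound and is essentially the paper's: by \Cref{obs:maximal_edges_disjoint_vertices} the only point that $\cl(e_i^-)$ and $\cl(e_j^-)$ can have in common among the vertices is a shared endpoint of $e_i$ and $e_j$, so no boundary edge lies in both closures; since $\dist(e_i)$ counts exactly the boundary edges in $\cl(e_i^-)$ and there are $k\ell = 2n-1$ boundary edges in total, $\sum_i \dist(e_i) \le 2n-1$ follows.

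The two-edge refinement, however, rests on a false claim. You assert that for incomparable non-crossing $e_1,e_2$ the open region $e_1^+\cap e_2^+$ contains at least $\frac{k-1}{2}\ell \ge 1$ hull vertices, and you extract the missing boundary edge from between two such vertices. This fails precisely in the tight case. Take $e_1=\{v_1,b\}$ and $e_2=\{v_{2n-1},b\}$ with $b$ a vertex of $\mathcal{G}_{\frac{k+1}{2}}$, the group opposite to the pair $\mathcal{G}_k,\mathcal{G}_1$, so that the triangle $v_1\,b\,v_{2n-1}$ contains $v_0$ (this is the degenerate special wedge whose apex is the single vertex $b$). These two edges are non-crossing and incomparable, yet $\cl(e_1^-)\cup\cl(e_2^-)$ contains \emph{every} hull vertex, so $e_1^+\cap e_2^+$ contains $v_0$ and no point of the point set; here $\dist(e_1)+\dist(e_2)=2n-2$ exactly. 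The single uncovered boundary edge is $\{v_{2n-1},v_1\}$, and it is not detected by a vertex of the open span but by the fact that $e_1$ and $e_2$ cannot share \emph{both} endpoints. That is the paper's (one-line) argument: two distinct edges cannot form a cycle, hence they have at least three distinct endpoints, none of which lies in $V(e_1^-)\cup V(e_2^-)$, so $(\dist(e_1)-1)+(\dist(e_2)-1)+3\le 2n-1$, i.e.\ $\dist(e_1)+\dist(e_2)\le 2n-2$. (The bound of $\frac{k-1}{2}\ell$ hull vertices in $e^+$ is valid for a \emph{single} non-radial edge, but it does not survive intersecting two such halfplanes.) Your first, vertex-counting attempt for part one, suitably combined with this endpoint count, would in fact give both inequalities at once.
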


\begin{proof}
The first part follows immediately from \Cref{obs:maximal_edges_disjoint_vertices} and the second part then from the fact that two edges cannot form a cycle.
\end{proof}

For convenience we include the following result.

\begin{lemma}\label{lem:plane_subgraphs_index_sums}
For any two incomparable edges with distances $d_i$ and $d_j$ ($1 \leq i,j < \frac{k+1}{2}\ell$) belonging to the same plane subgraph of $BW_{k,\ell}$, the inequality
\[
i + j \geq \ell + 1
\]
holds.
\end{lemma}

\begin{proof}
By \Cref{lem:plane_subgraphs1},
\[
\frac{k+1}{2}\ell - i + \frac{k+1}{2}\ell - j = d_i + d_j \leq 2n-2 = k\ell-1
\]
holds and rearranging terms yields the desired inequality.
\end{proof}

In our setting, this means that we can pair a maximal diagonal edge of distance $d_1$ only with one of distance $d_3$ in its plane spanning tree. And a maximal diagonal edge of distance $d_2$ we can only pair with one of distance $d_2$ or~$d_3$.

Further note that if the distances of the two maximal diagonal edges sum to $2n-2 = 3k - 1$ (so $d_1 + d_3$ or $d_2 + d_2$) the respective spanning tree has exactly one radial edge in its span, and if they only sum to $3k - 2$ ($d_2 + d_3$), there are exactly $2$ radial edges in the span.

In the following we call the radial edges incident to the middle vertex of each group \emph{center radial edges}. Also, we call the two outmost vertices of a group the \emph{right} and \emph{left} vertex of that group, respectively, as viewed from $v_0$ (that is, $v_{3i}$ is the right and $v_{3(i-1)+1}$ the left vertex of group $\mathcal{G}_i$). Finally, we are ready to prove \Cref{thm:bw_k3_classification}, which we restate for easier readability:

\thmBWKThreeClassification*

\begin{proof}
Using the structural properties derived on the way to proof \Cref{thm:disprove_conjecture_general}, we show how to construct all possible partial partitions fulfilling \Cref{lem:base_extension}. First of all, we need to have a tree $T_0$ using at least $(3\frac{k-1}{2} + 1)$ radial edges and a single maximal diagonal edge~$e$ (by \Cref{lem:preliminary1,prop:preliminary2}). Let $e$ again be between $\mathcal{G}_{\frac{k+1}{2}}$ and $\mathcal{G}_k$ and let the radial edge $\{v_0, v_{3\frac{k-1}{2}+1}\}$ be part of~$T_0$. 
If $\dist(e) = d_3$, there is one more maximal diagonal edge of distance $d_1$ than of distance $d_3$ left, so we cannot pair all edges of distance $d_1$. Further, if $\dist(e) = d_2$, all distance $d_1$ edges need to be paired with distance $d_3$ edges and, especially, all remaining distance $d_2$ edges need to be paired with each other. 
In particular, the tree containing the center radial edge of $\mathcal{G}_k$ (which cannot be part of $T_0$) would also have a maximal diagonal edge of distance $d_2$ between the groups $\mathcal{G}_{\frac{k+1}{2}}$ and $\mathcal{G}_k$; a contradiction to \Cref{prop:one_max_edge_per_distance}. Hence, we know $\dist(e) = d_1$. 

Considering the remaining $k-1$ maximal diagonal edges of distance $d_1$, they all must be paired with distance $d_3$ edges. This leaves one distance $d_3$ edge to be paired with a distance $d_2$ edge (let the respective tree be $T_1$) and two distance $d_2$ edges each for the remaining trees. In other words:

\begin{itemize}
\item $T_1$ contains one maximal diagonal edge of distance $d_2$ and $d_3$ each,
\item there are $\frac{k-1}{2}$ trees with two distance $d_2$ maximal diagonal edges (Class I), and
\item $k-1$ trees with one maximal diagonal edge of distance $d_1$ and $d_3$ each (Class II).
\end{itemize}

Next, consider the $\frac{k+1}{2}$ remaining center radial edges (not used by $T_0$), which clearly cannot be used by trees of Class II and no tree can use more than one of them. Hence, $T_1$ and every Class I tree needs to use exactly one such center radial edge. On the other hand, every Class I tree must have the center radial edge incident to its apex and hence, the center radial edges of groups $\mathcal{G}_{\frac{k+1}{2}}$ and $\mathcal{G}_k$ cannot both be used by Class I trees (again by contradiction to \Cref{prop:one_max_edge_per_distance} otherwise). Therefore, $T_1$ has to use one of them and some Class I tree $T_c$ the other. The remaining $\frac{k-3}{2}$ Class I trees use the center radial edges of the groups $\mathcal{G}_{\frac{k+3}{2}}$ to $\mathcal{G}_{k-1}$ (with apex in the respective group). 

\begin{figure}%[htb]
\centering
\includegraphics[scale=0.55,page=4]{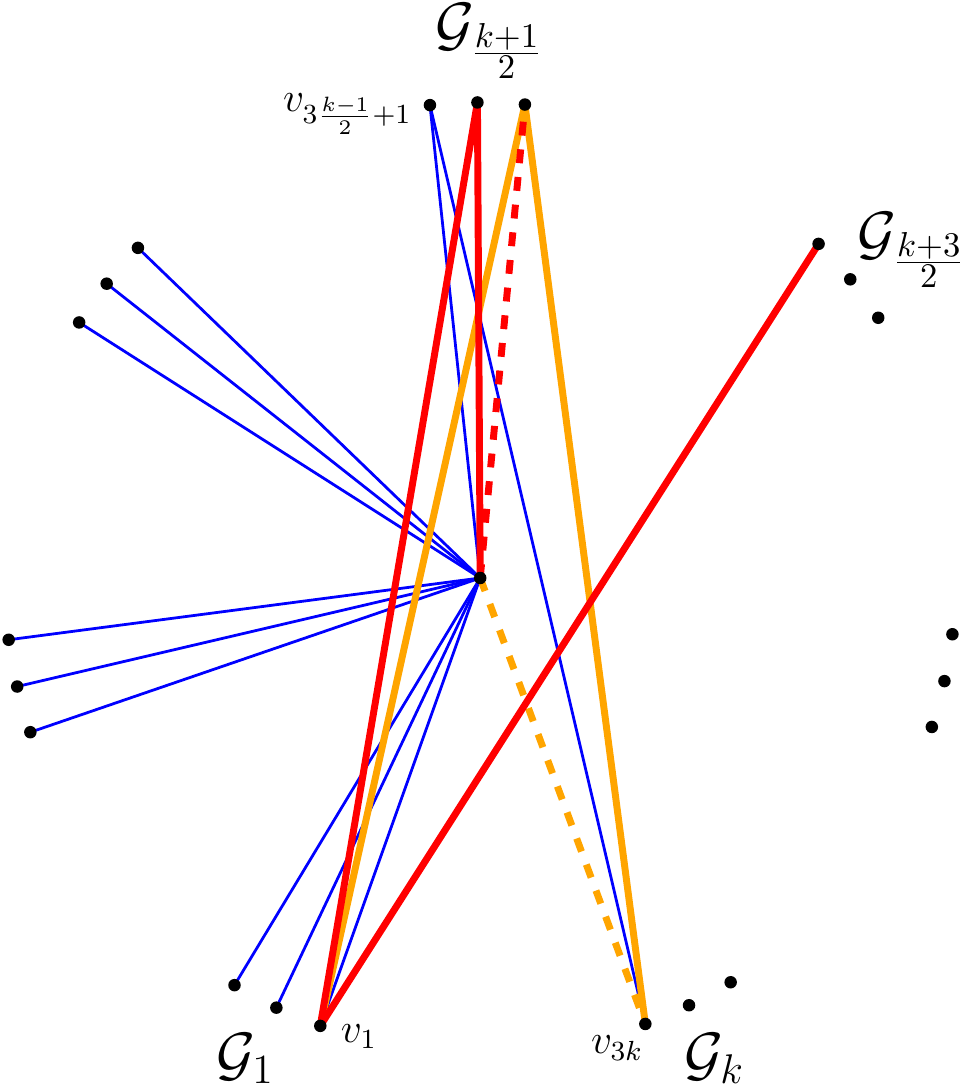}
\caption{Illustration of the claim in the proof of \Cref{thm:bw_k3_classification}. $T_0$ (blue) uses the $d_1$ edge between $\mathcal{G}_{\frac{k+1}{2}}$ and $\mathcal{G}_k$. If a Class I tree (green) uses the center radial edge of $\mathcal{G}_{\frac{k+1}{2}}$ (and therefore the $d_2$ edge between $\mathcal{G}_{\frac{k+1}{2}}$ and $\mathcal{G}_k$), either $T_1$ (red) or the orange tree uses the $d_3$ edge between $\mathcal{G}_{\frac{k+1}{2}}$ and $\mathcal{G}_k$. Consequently, the radial edge $\{v_0,v_{3k}\}$ (dashed black) cannot be accommodated anymore.}
\label{fig:bw_k3_all_base_constructions_0}
\end{figure}

\begin{claim*}
$T_c$ cannot use the center radial edge of $\mathcal{G}_{\frac{k+1}{2}}$.
\end{claim*}

\begin{claimproof}
Assume for the sake of contradiction that $T_c$ uses the center radial edge of $\mathcal{G}_{\frac{k+1}{2}}$ and hence, $T_1$ contains the center radial edge of $\mathcal{G}_k$. Moreover, $T_1$ has its $d_2$ edge between $\mathcal{G}_k$ and $\mathcal{G}_{\frac{k-1}{2}}$ (since all other $d_2$ edges are already taken by Class I trees). Then, $T_1$ has its $d_3$ edge either between $\mathcal{G}_k$ and $\mathcal{G}_{\frac{k+1}{2}}$, or between $\mathcal{G}_{\frac{k-1}{2}}$ and $\mathcal{G}_{k-1}$. In the latter case, the tree using the $d_1$ edge between $\mathcal{G}_{k}$ and $\mathcal{G}_{\frac{k-1}{2}}$ must have its $d_3$ edge between $\mathcal{G}_k$ and $\mathcal{G}_{\frac{k+1}{2}}$. In any case, the radial edge $\{v_0,v_{3k}\}$ cannot be accommodated anymore, since the corresponding tree would need to have a maximal diagonal edge between $\mathcal{G}_k$ and $\mathcal{G}_{\frac{k+1}{2}}$ (all of which are already taken). See \Cref{fig:bw_k3_all_base_constructions_0}.
\end{claimproof}

So, the Class I trees have their apexes in groups $\mathcal{G}_{\frac{k+3}{2}}$ to $\mathcal{G}_{k}$ and $T_1$ has to use the last remaining distance $d_2$ edge (between $\mathcal{G}_{\frac{k+1}{2}}$ and $\mathcal{G}_1$). Therefore, the distance $d_3$ edge of $T_1$ is either between $\mathcal{G}_{\frac{k+1}{2}}$ and $\mathcal{G}_k$ or between $\mathcal{G}_1$ and $\mathcal{G}_{\frac{k+3}{2}}$.

As a next step, consider the Class II trees. Remember that $T_0$ uses the distance $d_1$ edge between $\mathcal{G}_{\frac{k+1}{2}}$ and $\mathcal{G}_k$. Hence, the distance $d_3$ edge between $\mathcal{G}_1$ and $\mathcal{G}_{\frac{k+1}{2}}$ needs to be paired with the distance $d_1$ edge between $\mathcal{G}_1$ and $\mathcal{G}_{\frac{k+3}{2}}$, forcing the respective Class II tree to have its apex in the right vertex of $\mathcal{G}_1$. Subsequently, this forces another Class II tree to have its apex in the right vertex of $\mathcal{G}_2$ and so on, up to group $\mathcal{G}_{\frac{k-1}{2}}$.

Similarly, considering the distance $d_3$ edge between $\mathcal{G}_{\frac{k-1}{2}}$ and $\mathcal{G}_k$, we get a Class~II tree with its apex in the left vertex of $\mathcal{G}_{\frac{k-1}{2}}$. And again subsequently another with its apex in the left vertex of $\mathcal{G}_{\frac{k-3}{2}}$ and so on, down to group $\mathcal{G}_2$. Note that, since $T_1$ might use the distance $d_3$ edge between $\mathcal{G}_1$ and $\mathcal{G}_{\frac{k+3}{2}}$, we cannot yet conclude where the apex of the last Class~II tree will be (let that tree be $T_{n-1}$). So, let us summarize what we know so far:

\begin{itemize}
\item $T_0$ uses all radial edges $\{v_0, v_i\}$ (for $1 \leq i \leq 3\frac{k-1}{2} + 1$) and the maximal diagonal edge $\{v_{3\frac{k-1}{2} + 1}, v_{3k}\}$ of distance $d_1$ (between $\mathcal{G}_{\frac{k+1}{2}}$ and $\mathcal{G}_k$),
\item $T_1$ uses the center radial edge of group $\mathcal{G}_{\frac{k+1}{2}}$ and the maximal diagonal edge $\{v_1,v_{3\frac{k-1}{2} + 2}\}$ of distance $d_2$ (between $\mathcal{G}_1$ and $\mathcal{G}_{\frac{k+1}{2}}$),
\item the center radial edges of groups $\mathcal{G}_{\frac{k+3}{2}}$ to $\mathcal{G}_{k}$ are used by Class I trees (with apex in the respective group),
\item there are $k-2$ Class II trees with apexes in an outmost vertex from the right vertex of group $\mathcal{G}_{1}$ to the right vertex of group $\mathcal{G}_{\frac{k-1}{2}}$, and
\item the last Class II tree $T_{n-1}$ uses the maximal diagonal edge $\{v_1,v_{3\frac{k+1}{2}}\}$ of distance $d_1$ (between $\mathcal{G}_1$ and $\mathcal{G}_{\frac{k+1}{2}}$ --- it is the only distance $d_1$ edge left).
\end{itemize}

It remains to determine the distance $d_3$ edge and the second radial edge of $T_1$ as well as the distance $d_3$ edge (and hence the apex) of $T_{n-1}$. We will see that there are three base cases for that:

\begin{description}
\item[Case 1:] $T_1$ has its apex in $\mathcal{G}_1$.

That is, $T_1$ has its distance $d_3$ edge between $\mathcal{G}_1$ and $\mathcal{G}_{\frac{k+3}{2}}$. Hence, the second radial edge of $T_1$ is $\{v_0, v_{3\frac{k+1}{2}}\}$ and $T_{n-1}$ has its apex in the right vertex of $\mathcal{G}_{\frac{k+1}{2}}$ (see \Cref{fig:bw_k3_all_base_constructions_a}).

\item[Case 2:] $T_1$ has its apex in $\mathcal{G}_{\frac{k+1}{2}}$.

That is, $T_1$ has its distance $d_3$ edge between $\mathcal{G}_{\frac{k+1}{2}}$ and $\mathcal{G}_k$. Hence, $T_{n-1}$ has its distance $d_3$ edge between $\mathcal{G}_1$ and $\mathcal{G}_{\frac{k+3}{2}}$ (so its apex is in the left vertex of $\mathcal{G}_1$) and therefore the second radial edge of $T_1$ must be $\{v_0, v_{3k}\}$ (since no other tree can use it anymore). However, there are two possibilities for the distance $d_3$ edge of $T_1$ now:
\begin{enumerate}[a)]
\item either $\{v_{3\frac{k+1}{2}}, v_{3k}\}$ as depicted in \Cref{fig:bw_k3_all_base_constructions_b}, or
\item $\{v_{3\frac{k-1}{2}+2}, v_{3(k-1)+2}\}$ as depicted in \Cref{fig:bw_k3_all_base_constructions_c}.
\end{enumerate}
\end{description}

\begin{figure}
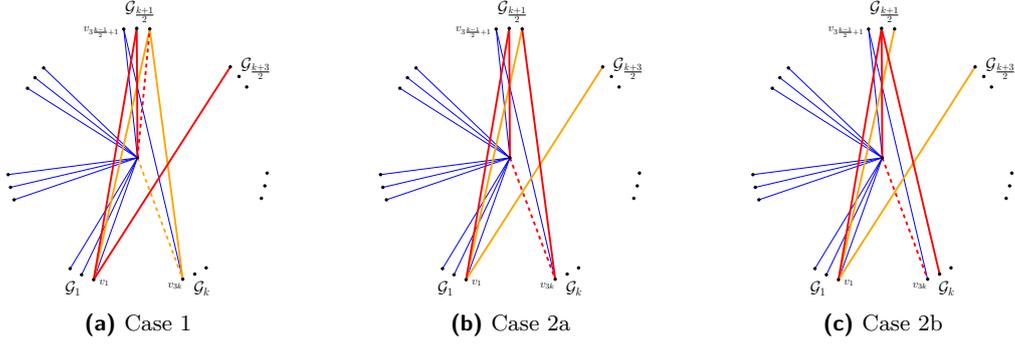

\centering
\begin{subfigure}{.25\textwidth}
\centering
\includegraphics[width=\textwidth,page=1]{bw_k3_all_base_constructions}
\caption{\centering Case 1}
\label{fig:bw_k3_all_base_constructions_a}
\end{subfigure}\hspace{.1\textwidth}%
\begin{subfigure}{.25\textwidth}
\centering
\includegraphics[width=\textwidth,page=2]{bw_k3_all_base_constructions}
\caption{\centering Case 2a}
\label{fig:bw_k3_all_base_constructions_b}
\end{subfigure}\hspace{.1\textwidth}%
\begin{subfigure}{.25\textwidth}
\centering
\includegraphics[width=\textwidth,page=3]{bw_k3_all_base_constructions}
\caption{\centering Case 2b}
\label{fig:bw_k3_all_base_constructions_c}
\end{subfigure}
\caption{Illustration of the three base cases. $T_0$ is depicted in blue, $T_1$ is red, and $T_{n-1}$ orange. The red dashed radial edge depicts the second radial edge, that $T_1$ has to use.}
\label{fig:bw_k3_all_base_constructions}
\end{figure}

By now, we have fixed all maximal diagonal edges and they fulfill property (\ref{item:base_extension_b}) of \Cref{lem:base_extension}. Finally, it only remains to determine the radial edges of all Class II trees. Note that for each group $\mathcal{G}'$ (from $\mathcal{G}_1$ to $\mathcal{G}_{\frac{k-1}{2}}$) with two apexes of Class II trees in it (at the outmost vertices), those two trees have the same pair of radial edges in their span (in the groups opposite to~$\mathcal{G}'$). So we have two (non-isomorphic) choices for each of those cases (independently).

Further, in Case 1 $T_{n-1}$ needs to contain $\{v_0, v_{3k}\}$ and the Class II tree with apex in the right vertex of $\mathcal{G}_1$ needs to contain $\{v_0, v_{3\frac{k+1}{2}+1}\}$ (the other radial edges in its span are used by $T_1$ and $T_0$). This leaves a pair of Class II trees for each group from $\mathcal{G}_2$ to $\mathcal{G}_{\frac{k-1}{2}}$. Meanwhile, in Case 2 we have a pair of Class II trees for each group from $\mathcal{G}_1$ to $\mathcal{G}_{\frac{k-1}{2}}$. That is, in Case 2 we have exactly $2^{\frac{k-1}{2}}$ possibilities (for the Class II trees) to form a partial partition fulfilling the properties of \Cref{lem:base_extension}, while in Case 1 we only have $2^{\frac{k-3}{2}}$ possibilities.

Finally, by \Cref{lem:full_extension}, there are $2^{3\frac{k-1}{2} - 1}$ possible extensions to get a full partition from such a partial one. Hence, in total we get
\[
2^{3\frac{k-1}{2} - 1}(2\cdot 2^{\frac{k-1}{2}} + 2^{\frac{k-3}{2}}) = 4^{k-1} + 4^{k-2}
\]
non-isomorphic partitions.
\end{proof}

In \Cref{app:sec:additional_figures}, we give some examples of partitions corresponding to the three base constructions.

\subsection{Partition into Plane Subgraphs}\label{sec:subgraphs}

In the previous section, we gave a classification which bumpy wheels can be partitioned into plane spanning trees and which not. Surprisingly it turns out that allowing arbitrary plane subgraphs does not help much. The only bumpy wheel that can be partitioned into plane subgraphs but not into plane spanning trees is $BW_{3,5}$. 

Note that before we also heavily exploited the structure enforced by spanning trees. This is not possible anymore for the case of arbitrary plane subgraphs. We cannot make any assumptions on the number of edges, not even about connectedness. The only property we can (and will) exploit is the fact, that we still have maximal diagonal edges and radial edges may only be contained in their span (recall \Cref{lem:bw_2_boundary}).

We split the proof of \Cref{thm:main_subgraphs} into two parts, first focusing on the case $\ell > 5$.

\begin{restatable}{theorem}{disproveSubgraphs}\label{thm:disprove_subgraphs}
For any odd parameters $k \geq 3$ and $\ell > 5$, the edges of $BW_{k,\ell}$ cannot be partitioned into $n = \frac{k\ell + 1}{2}$ plane subgraphs.
\end{restatable}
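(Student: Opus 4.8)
The plan is to follow the skeleton of the proof of \Cref{thm:disprove_conjecture_general}, but with the substantially weaker toolkit available for arbitrary plane subgraphs. Assume for contradiction that $BW_{k,\ell}$ admits a partition $S_0,\dots,S_{n-1}$ into $n$ plane subgraphs. Since boundary edges cross nothing, they may be placed into any part and impose no constraint, so it suffices to derive a contradiction from distributing the $k\ell$ radial edges and the diagonal edges. For each part $S_i$, let $M_i$ be its set of maximal (w.r.t. $<_c$) diagonal edges. The edges of $M_i$ are pairwise incomparable and pairwise non-crossing, so by \Cref{obs:maximal_edges_disjoint_vertices} the vertex sets of $\{e^-:e\in M_i\}$ are pairwise disjoint, and by \Cref{lem:plane_subgraphs1} we have $\sum_{e\in M_i}\dist(e)\le 2n-1$. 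Moreover every diagonal edge of $S_i$ lies in $e^-$ for some $e\in M_i$, hence — exactly as in \Cref{lem:bw_2_boundary} — a radial edge $\{v_0,v_j\}\in E(S_i)$ forces $v_j\notin e^-$ for all $e\in M_i$. Writing $r_i$ for the number of radial edges of $S_i$, this gives the basic capacity bound
\[
r_i \;\le\; (2n-1) - \sum_{e\in M_i}\bigl(\dist(e)-1\bigr) \;=\; k\ell - \sum_{e\in M_i}\dist(e) + |M_i| .
\]

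The key difference to the tree case is that $S_i$ need not be connected, so $|M_i|$ is no longer bounded by $2$: a single part may carry many small-distance maximal diagonal edges and thereby absorb up to $\Theta(n)$ radial edges. The leverage against this is again \Cref{lem:plane_subgraphs1}: since $\sum_{e\in M_i}\dist(e)\le 2n-1$, a part absorbing many radial edges must do so with maximal diagonal edges of \emph{small} distance, i.e. ones close to the boundary, and such a part is then useless for shielding (and hence covering) the radial edges incident to the \emph{interior} vertices of the ``deep'' groups. To make this quantitative I would, as in the proof of \Cref{thm:disprove_conjecture_general}, single out the $\tfrac{k+1}{2}$ consecutive groups $\mathcal{G}_{(k+1)/2},\dots,\mathcal{G}_k$, of which only $\mathcal{G}_{(k+1)/2}$ and $\mathcal{G}_k$ form an opposite pair, classify the parts by whether they contain an outmost radial edge (only $O(k)$ of them can, since there are just $2k$ outmost radial edges in total), and for every remaining part use the bound above together with the fact that its maximal diagonal edges must shield all of its non-outmost radial vertices — this pins down which groups those radial vertices may lie in, mirroring \Cref{prop:no_outmost_radial}.

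The combinatorial scarcity input replacing \Cref{prop:one_max_edge_per_distance} is: for a fixed pair $\mathcal{G},\mathcal{G}'$ of opposite groups and each $j\in\{1,\dots,\ell\}$, the $j$ diagonal edges of distance $d_j$ between $\mathcal{G}$ and $\mathcal{G}'$ pairwise cross (as in the proof of \Cref{prop:one_max_edge_per_distance}), hence lie in $j$ distinct parts, and a shorter such edge is never maximal in a part containing a longer one; \Cref{lem:plane_subgraphs_index_sums} further restricts which distances can coexist as incomparable maximal edges of one part. Applied to the bottleneck pair $\{\mathcal{G}_{(k+1)/2},\mathcal{G}_k\}$ — exactly where two trees with apexes in these groups competed for the $\ell$ maximal diagonal edges between them in the tree proof — this limits the number of parts that can handle the radial edges to the inside vertices of $\mathcal{G}_{(k+1)/2}$ and $\mathcal{G}_k$ while avoiding outmost radial edges to $(\ell-2)$ rather than $2(\ell-2)$. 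Combining the contributions of the $O(k)$ parts containing outmost radial edges (each of bounded radial capacity) with the $\le\tfrac{k-1}{2}(\ell-2)$ parts that avoid them should yield an inequality that fails to cover all $k\ell$ radial edges precisely when $\ell>5$, the extra slack over the tree threshold $\ell>3$ coming from the few radial edges a disconnected part may absorb ``for free''. I expect the main obstacle to be exactly this last bookkeeping: ruling out clever disconnected parts that combine a long shielding diagonal edge with a cluster of short ones to squeeze in extra radial edges, and carrying out the radial-capacity and opposite-pair estimates with enough precision that they are genuinely tight — which they must be, since $BW_{3,5}$ does admit a partition into plane subgraphs — rather than wasteful.
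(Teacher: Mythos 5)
Your setup identifies the right ingredients --- the capacity bound via \Cref{lem:plane_subgraphs1}, the focus on the opposite pair $\mathcal{G}_{(k+1)/2},\mathcal{G}_k$, and the danger posed by disconnected parts carrying many small maximal diagonal edges --- but the proposal stops exactly where the actual work begins, and the step you defer (``ruling out clever disconnected parts\dots'') is the entire substance of the proof. Two concrete mechanisms are missing. First, you need a \emph{global lower bound} on the total distance of maximal diagonal edges, not just the per-part upper bound $\sum_{e\in M_i}\dist(e)\le 2n-1$. This comes from the crossing families between opposite groups: for each of the $k$ opposite pairs and each $j\in\{1,\dots,\ell\}$ there must be at least $j$ maximal edges of distance at least $d_j$, yielding $k\ell$ ``forced'' edges with $\sum\dist \ge k\sum_{j=1}^{\ell}\bigl(\tfrac{k+1}{2}\ell-j\bigr)$. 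Pairing this with the per-part upper bound gives a budget of $\tfrac{\ell-1}{2}$ on the total ``deficit'' across all parts, i.e.\ on the total number of radial edges the parts can absorb beyond the bare minimum. Without such a budget, your per-part bound $r_i\le k\ell-\sum_{e\in M_i}\dist(e)+|M_i|$ yields nothing when summed: the total radial capacity of $n$ parts comfortably exceeds $k\ell$ for every $\ell$, so no contradiction arises from a global count.

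Second, the final count must be \emph{localized}: one counts only the $2\ell-4$ inside radial edges of $\mathcal{G}_{(k+1)/2}$ and $\mathcal{G}_k$, shows each must be incident either to an apex lying in one of these two groups or to one of the at most $\tfrac{\ell-1}{2}$ additional vertices, and bounds the number of such apexes by $\ell-1$ (the forced edges between this opposite pair not used by the one-forced-edge part $D_0$), giving $\tfrac{3}{2}(\ell-1)<2\ell-4$ exactly for $\ell>5$. To even define an apex you first need the subgraph analogue of \Cref{prop:preliminary2}, namely that every part but one carries exactly two forced edges; this requires a separate argument (delicate for $k=3$, where three distance-$d_\ell$ forced edges could a priori coexist in one part). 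Your sketch instead ports the tree-proof classification by outmost radial edges with the $(\ell-2)$-per-stripe bound, but that route reproduces the threshold $\ell>3$, and you offer no quantitative handle on the ``slack'' that moves it to $\ell>5$; the deficit budget is precisely that handle, and it is absent from the proposal.
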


The proof is more technical than for spanning trees. Again, we start with some structural results. Recall that the distances of two incomparable edges sum to at most $2n-2$ (\Cref{lem:plane_subgraphs1}). Also recall the definition of distance $d_i = \frac{k+1}{2}\ell - i$ from \Cref{sec:tree_disprove}.

\begin{proposition}\label{prop:plane_subgraphs1a}
Let $D_0, \ldots, D_{n-1}$ be a partition of $BW_{k,\ell}$ into $n = \frac{k\ell + 1}{2}$ plane subgraphs (if it exists). Then between each pair of opposite groups and for each $1 \leq i \leq \ell$ there are at least $i$ diagonal edges of distance at least $d_i$ that are maximal in their subgraph. %
\end{proposition}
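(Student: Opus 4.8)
The plan is to mimic the structure of the spanning-tree argument (Lemmas~\ref{lem:continue_diagonal_edges} and Proposition~\ref{prop:one_max_edge_per_distance}), but since we no longer have connectedness or edge counts to lean on, the only tools available are the partial order $<_c$, the span restriction for radial edges (Lemma~\ref{lem:bw_2_boundary}), and the distance-sum bound for incomparable edges (Lemma~\ref{lem:plane_subgraphs1}). Fix a pair $\mathcal{G},\mathcal{G}'$ of opposite groups. Recall there are exactly $i$ diagonal edges of distance exactly $d_i$ between $\mathcal{G}$ and $\mathcal{G}'$, that all edges of the same distance pairwise cross, and that two edges of different distances between the same opposite pair either are comparable (the shorter lies in $(\text{longer})^-$) or cross. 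The target is: for each $1\le i\le \ell$, among all diagonal edges of distance $\ge d_i$ between $\mathcal{G}$ and $\mathcal{G}'$, at least $i$ are maximal in their respective subgraph. (Note this is genuinely weaker than the spanning-tree statement, which pinned down a unique maximal edge at each distance level; here we only need a lower bound of $i$ maximal edges among the $1+2+\cdots+i$ edges of distance $\ge d_i$.)

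**Main steps.** First I would handle the base-ish structure: look at the edges of distance $\ge d_\ell$ between $\mathcal{G}$ and $\mathcal{G}'$ — there are $1+2+\cdots+\ell = \binom{\ell+1}{2}$ of them, and they partition (as a set) into the $\ell$ ``crossing families'' $F_1,\dots,F_\ell$ where $F_j$ consists of the $j$ edges of distance exactly $d_j$. Within a single subgraph $D_m$, the edges of $D_m$ among these must be pairwise non-crossing, hence pairwise comparable within each family collapses to at most one edge per family, and across families any two are comparable or crossing — so the edges of $D_m$ in this set form a chain under $<_c$, and in particular $D_m$ contributes at most one \emph{maximal} edge to this set. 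The key quantitative step is then a counting/pigeonhole argument on how many distinct subgraphs must appear: I would argue that the $\binom{\ell+1}{2}$ edges get distributed among the subgraphs so that the number of subgraphs touching distance level $\le d_i$ (i.e.\ having an edge of distance $\ge d_i$) is at least $i$, and each such subgraph's unique maximal edge (restricted to this opposite pair) has distance $\ge d_i$ — actually, more carefully, I would induct on $i$: suppose at level $d_{i-1}$ there are already $\ge i-1$ maximal edges of distance $\ge d_{i-1}$; the $i$ edges of distance exactly $d_i$ pairwise cross, so they lie in $i$ distinct subgraphs; at most $i-1$ of those subgraphs can be among the ones already providing a maximal edge of distance $\ge d_{i-1}$ \emph{and} have that maximal edge dominate the $d_i$-edge (this uses Lemma~\ref{lem:continue_diagonal_edges}-style reasoning: if an edge of distance $d_{i-1}$ in $D_m$ has an edge of distance $d_i$ below it in the same subgraph, fine, but then... ) — so at least one of the $i$ new subgraphs contributes a fresh maximal edge, pushing the count to $\ge i$. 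I would also need the observation that a maximal diagonal edge of $D_m$ (maximal among \emph{all} edges of $D_m$, not just those between $\mathcal{G}$ and $\mathcal{G}'$) that happens to lie between $\mathcal{G}$ and $\mathcal{G}'$ still counts, and conversely that being maximal among the between-$\mathcal{G}$-$\mathcal{G}'$ edges of $D_m$ forces being maximal in $D_m$ (since any edge of $D_m$ dominating it would, by the geometry of bumpy wheels with the center on one side, have to also lie ``outside'' between the same or a wider pair — this needs the span/apex discussion).

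**The main obstacle.** The delicate point is ruling out the ``collapse'': a priori one subgraph could greedily absorb many of the $\binom{\ell+1}{2}$ edges into one long chain, contributing only a single maximal edge, which would sink the count. The spanning-tree proof avoided this via Lemma~\ref{lem:continue_diagonal_edges} (connectivity forced each diagonal edge to be ``continued'' by exactly one shorter edge \emph{in the same tree}), giving the exact bijection between distance levels. Without connectivity that fails — a subgraph can skip levels. So the crux will be a purely combinatorial lemma of the flavour: \emph{if a single plane subgraph contains a chain $e_{j_1} <_c e_{j_2} <_c \cdots$ of diagonal edges between a fixed opposite pair, then because the $\ell$ crossing families each have different sizes and pairwise-crossing members, the total number of edges any one subgraph can ``use up'' from distance levels $\ge d_i$ is exactly its chain length, which is $\le i$}; combined with the fact that the $j$ mutually-crossing edges at level $d_j$ occupy $j$ distinct subgraphs, a double-counting of incidences (edge, subgraph) over levels $d_1,\dots,d_i$ versus the per-subgraph cap of ``one maximal edge, chain length $\le i$'' should force $\ge i$ maximal edges at distance $\ge d_i$. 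I would formalize this as an induction on $i$ and expect the write-up to spend most of its length on the inductive step, specifically on verifying that the $i$ subgraphs hosting the $d_i$-level edges cannot all fail to yield a new maximal edge.
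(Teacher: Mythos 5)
Your plan contains the one observation that actually proves the proposition, but it is buried under machinery that is either unavailable or unnecessary, and the step you yourself call the crux is left unfinished. The direct argument (which is the paper's) is: fix the pair $\mathcal{G},\mathcal{G}'$ and fix $i$. The $i$ edges of distance exactly $d_i$ between $\mathcal{G}$ and $\mathcal{G}'$ pairwise cross, so they lie in $i$ \emph{distinct} subgraphs. In each such subgraph $D_m$, pass from its distance-$d_i$ edge $e$ to an edge $f$ of $D_m$ that is maximal in $D_m$ and satisfies $f=e$ or $e<_c f$; then $\dist(f)\geq d_i$, and since $\dist(f)\geq\dist(e)\geq d_\ell$ and $f^-\supseteq \cl(e^-)$, the edge $f$ again connects $\mathcal{G}$ and $\mathcal{G}'$ (any edge of distance at least $d_\ell$ connects opposite groups, and $f^-$ reaching into both $\mathcal{G}$ and $\mathcal{G}'$ pins down which pair). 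Because this is a partition and the $i$ host subgraphs are distinct, the $i$ resulting maximal edges are distinct. That is the whole proof: no induction over $i$, no bookkeeping of chains.

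The concrete problems with your version are these. First, you invoke \Cref{lem:continue_diagonal_edges}, which is proved using connectivity of spanning trees and is simply not available for arbitrary plane subgraphs (a subgraph can skip distance levels, as you note yourself later). Second, the inductive step you announce as the main content trails off mid-sentence, and the repair you sketch (bounding chain lengths and double-counting incidences over levels) addresses a phantom obstacle: the statement is a per-level lower bound, and the level-$i$ crossing family already supplies $i$ distinct host subgraphs no matter how many edges any one subgraph absorbs into a chain --- a long chain still yields one maximal edge in its subgraph, which is all that is counted. (If you insist on the induction, the step is salvaged by pigeonhole, since $i$ distinct maximal edges cannot all lie among the $i-1$ previously counted ones; but then the induction does no work.) The one point you correctly flag but defer --- that an edge dominating, with respect to $<_c$, a distance-$\geq d_\ell$ edge between $\mathcal{G}$ and $\mathcal{G}'$ must itself run between $\mathcal{G}$ and $\mathcal{G}'$ --- is the only geometric input the proof actually needs, and it deserves the two lines above rather than a deferral to ``the span/apex discussion''.
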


\begin{proof}
First note that $\dist(e) \leq d_1$ holds for any diagonal edge $e$. Furthermore, any diagonal edge $e$ with $\dist(e) \geq d_\ell$ connects opposite groups.

Observe now that between any pair of opposite groups and any $1 \leq i \leq \ell$ there is a crossing family of size $i$ (where each edge has distance $d_i$). Since all these edges need to belong to different subgraphs and larger edges (with respect to $<_c$) may only exist between the same pair of opposite groups, there need to be at least $i$ maximal diagonal edges of distance at least $d_i$ between each pair of opposite groups.%(which are $k$ many).
\end{proof}

Rephrasing \Cref{prop:plane_subgraphs1a}, we may also say that between each pair of opposite groups there is one maximal diagonal edge of distance (at least) $d_1$, \emph{another} of distance at least $d_2$, yet \emph{another} of distance at least $d_3$, and so on until distance~$d_\ell$.

And since there are exactly $k$ pairs of opposite groups, \Cref{prop:plane_subgraphs1a} guarantees at least $k \cdot \ell$ maximal diagonal edges of distance at least $d_\ell$ in any partition of $BW_{k,\ell}$. For each pair of opposite groups, we distinctly pick for each  $1 \leq i \leq \ell$ one of those edges that has distance at least $d_i$ to get precisely $k \cdot \ell$ edges in total, which we call \emph{forced diagonal} edges in the following (or forced edges for short).

Let $E_{\text{forced}}$ be the set of forced diagonal edges, then from the definition it follows:
\begin{equation}\label{eq:forced_lower_bound}
\sum_{e \in E_{\text{forced}}} \dist(e) \geq k\sum_{i=1}^\ell \left(\frac{k+1}{2}\ell - i\right)
\end{equation}

Note in the following that \Cref{lem:plane_subgraphs_index_sums,lem:plane_subgraphs1} hold especially for forced diagonal edges (contained in the same plane subgraph), since they are maximal and therefore incomparable.

\begin{proposition}\label{prop:plane_subgraphs2}
Let $D_0, \ldots, D_{n-1}$ be a partition of $BW_{k,\ell}$ into $n = \frac{k\ell + 1}{2}$ plane subgraphs (if it exists). Then one subgraph, say $D_0$, contains exactly one forced diagonal edge and all other $n-1$ subgraphs contain exactly two forced diagonal edges.
\end{proposition}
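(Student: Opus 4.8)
The plan is to count, for each plane subgraph $D_j$, the number $m_j$ of forced diagonal edges it contains, and to pin this sequence down by a distance‑counting argument. Since the $k\ell = 2n-1$ forced edges are partitioned among the $n$ subgraphs $D_0,\dots,D_{n-1}$, we have $\sum_j m_j = 2n-1$, so the assertion is equivalent to: no $D_j$ is forced‑edge free, and no $D_j$ contains three or more forced edges (then $2n-1 = 2\cdot n - 1$ forces exactly one $D_j$ with $m_j=1$, which we relabel $D_0$, and $m_j=2$ for the remaining $n-1$). The forced edges lying in a common $D_j$ are maximal diagonal edges of $D_j$, hence pairwise incomparable and non‑crossing, so \Cref{lem:plane_subgraphs1} applies to them.

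First I would record the elementary bounds. A forced edge has distance at least $d_\ell = \tfrac{k-1}{2}\ell$, so $m_j$ incomparable forced edges have total distance at least $m_j\,d_\ell$, which by \Cref{lem:plane_subgraphs1} is at most $2n-1=k\ell$; as $4d_\ell = 2(k-1)\ell > k\ell$ for $k\ge 3$, this already gives $m_j\le 3$ for every $j$. Moreover: if $m_j=1$, the single forced edge $e$ satisfies $\dist(e)\le d_1 = \tfrac{k+1}{2}\ell-1 = (2n-2)-d_\ell$; if $m_j=2$, the two distances sum to at most $2n-2$ (second part of \Cref{lem:plane_subgraphs1}); if $m_j=3$, the three sum to at most $2n-1$. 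Let $Z,A,B,C$ count the subgraphs with $m_j=0,1,2,3$, so $A+B+C+Z=n$.

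The heart is to play these per‑subgraph upper bounds against the global lower bound \eqref{eq:forced_lower_bound}, which evaluates to
\[
\sum_{e\in E_{\text{forced}}}\dist(e)\;\ge\; k\sum_{i=1}^{\ell}\Big(\tfrac{k+1}{2}\ell-i\Big)\;=\;\tfrac{k\ell}{2}(k\ell-1)\;=\;(2n-1)(n-1).
\]
Summing the per‑subgraph bounds and using $A+B+C=n-Z$ gives $(2n-1)(n-1)\le (2n-2)(n-Z)+C-A\,d_\ell$, and since $(2n-1)(n-1)=(2n-2)n-(n-1)$ this rearranges to $(2n-2)Z+A\,d_\ell-C\le n-1$. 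Now $Z=0$: from $3C\le\sum_j m_j=2n-1$ we get $C\le\tfrac{2n-1}{3}$, so if $Z\ge 1$ then $2n-2\le(2n-2)Z\le n-1+C-A\,d_\ell\le n-1+\tfrac{2n-1}{3}$, forcing $n\le 2$ and contradicting $n=\tfrac{k\ell+1}{2}\ge 5$. With $Z=0$, combining $A+B+C=n$ with $A+2B+3C=2n-1$ gives $A=1+C$; feeding this into $A\,d_\ell-C\le n-1$ and using $(n-1)-d_\ell=\tfrac{\ell-1}{2}$ together with $d_\ell-1\ge\ell-1>0$ yields $C(\ell-1)\le\tfrac{\ell-1}{2}$, so $C=0$. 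Hence $A=1$ and $B=n-1$, as claimed.

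The main obstacle — and the reason \eqref{eq:forced_lower_bound} is needed at all — is the case $k=3$: there the crude distance count only gives $m_j\le 3$, because three forced edges of the minimum distance $d_\ell$ (a ``triangle'' using the unique distance‑$d_\ell$ forced edge of each of the three pairs of opposite groups) exactly exhaust the budget $2n-1$, so one genuinely has to exploit that such subgraphs are too expensive against the lower bound on the total forced distance. For $k\ge 5$ the same count already yields $m_j\le 2$ and the statement follows almost immediately since $2(n-1)<2n-1$. The only real care is to carry the sharp constants through the bookkeeping — in particular the deficit $d_\ell$ incurred by a singleton subgraph and the extra $+1$ a triple is permitted.
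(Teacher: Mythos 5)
Your proof is correct, and it takes a genuinely different route from the paper's in the one place where the statement is not immediate, namely excluding subgraphs with three forced edges. The paper splits on $k$: for $k>3$ the bound $3d_\ell=\tfrac{3(k-1)}{2}\ell>k\ell$ together with \Cref{lem:plane_subgraphs1} finishes at once, while for $k=3$ it runs a structural argument (a triple must consist of three edges of distance exactly $d_\ell$, so by \Cref{lem:plane_subgraphs_index_sums} the three distance-$d_1$ forced edges can no longer be paired, and counting the remaining forced edges of distance at least $d_{\ell-1}$ yields $n+0.5$ needed subgraphs). You instead run a single global double-counting argument valid for all $k\ge 3$: summing the per-subgraph upper bounds ($d_1$ for a singleton, $2n-2$ for a pair, $2n-1$ for a triple, all justified by \Cref{lem:plane_subgraphs1} since forced edges in a common subgraph are maximal, hence pairwise incomparable and non-crossing) and comparing against the lower bound \Cref{eq:forced_lower_bound} gives $(2n-2)Z+A\,d_\ell-C\le n-1$, whence $Z=0$, and then $A=C+1$ turns this into $C(d_\ell-1)\le\tfrac{\ell-1}{2}$, forcing $C=0$. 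The arithmetic checks out: $(2n-2)-d_\ell=d_1$, the lower bound equals $(2n-1)(n-1)$, $(n-1)-d_\ell=\tfrac{\ell-1}{2}$, and $d_\ell-1\ge\ell-1>0$ for $k,\ell\ge 3$. What your version buys is uniformity in $k$ (no case split) and an explicit proof that no subgraph is forced-edge free, which the paper leaves to the final pigeonhole; the cost is that you must invoke \Cref{eq:forced_lower_bound}, which the paper establishes beforehand but only actually uses later, in \Cref{prop:plane_subgraphs2a}. The paper's $k=3$ argument is more local and pinpoints the structural obstruction, but your accounting is arguably cleaner.
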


\begin{proof}
In total there are $k\cdot \ell$ forced edges and we have $n = \frac{k\ell + 1}{2}$ subgraphs. Hence, to prove the statement, we only need to show that no subgraph can contain more than two forced edges. To this end, we consider the two cases $k=3$ and $k>3$ separately:

\begin{description}
\item Case 1: $k > 3$.

By \Cref{lem:plane_subgraphs1}, in each subgraph the sum of distances of its maximal diagonal edges may not exceed $k\ell$. If there is a subgraph with more than two forced edges (each of distance at least $d_\ell = \frac{k-1}{2}\ell$), the sum of their distances is at least $3\cdot \frac{k-1}{2}\ell$, which is $> k\ell$ for $k>3$. 

For the case $k=3$, we need a more careful analysis.

\item Case 2: $k=3$.

If there is a subgraph $D'$ containing three forced edges, then all three need to have distance exactly $d_\ell = \ell$ (for a combined distance of at most $3\ell$) and the subgraph cannot contain any further forced edges. Consider then the three forced edges of distance $d_1$. By \Cref{lem:plane_subgraphs_index_sums} they can only be paired with forced edges of distance at most $d_\ell$. However, the three forced edges of distance (at least) $d_\ell$ are taken by $D'$. So we get three subgraphs containing only one forced edge (of distance $d_1$). Together with $D'$ this leaves $3\ell-6$ forced edges of distance at least $d_{\ell-1}$ to be covered by the remaining subgraphs (note that by \Cref{lem:plane_subgraphs1} any of the other subgraphs can contain at most two of them). Hence, in total we would already need at least 
\[
4 + \frac{3\ell - 6}{2} = \frac{3\ell + 1}{2} + 0.5 = n + 0.5
\]
plane subgraphs to cover all forced edges.
\end{description}

Therefore, we need $n-1$ subgraphs containing exactly two forced edges and one subgraph (say $D_0$) containing one forced edge, to cover all $k \cdot \ell = 2n-1$ forced edges.
\end{proof}

The reader might notice that \Cref{prop:plane_subgraphs2} is similar to \Cref{prop:preliminary2} for plane spanning trees (in \Cref{sec:tree_disprove}) and it also holds for those. Furthermore, the following proposition is a generalization of an idea, that we already used towards the proof of \Cref{thm:bw_k3_classification}.

\begin{proposition}\label{prop:plane_subgraphs2a}
Let $D_0, \ldots, D_{n-1}$ be a partition of $BW_{k,\ell}$ into $n = \frac{k\ell + 1}{2}$ plane subgraphs (if it exists) and let $e$ be the forced diagonal edge in $D_0$ with $\dist(e) = d_1 - x_0$ (for some $x_0 \geq 0$). Moreover, for each $1 \leq i \leq n-1$, let $x_i \ge 0$ be such that the distances of the two forced diagonal edges $e_i$ and $e_i'$ in $D_i$ sum to $2n-2-x_i$.
Then, 
\[
\sum_{i=0}^{n-1} x_i \leq \frac{\ell - 1}{2}
\]
holds.
\end{proposition}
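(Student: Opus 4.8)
The plan is a straightforward double counting of the total distance $S := \sum_{e \in E_{\text{forced}}} \dist(e)$ of all forced diagonal edges. On one side I would use the lower bound \Cref{eq:forced_lower_bound}; on the other side I would use \Cref{prop:plane_subgraphs2}, which pins down that $D_0$ carries exactly one forced edge while every other $D_i$ carries exactly two, together with the defining relations $\dist(e) = d_1 - x_0$ and $\dist(e_i) + \dist(e_i') = 2n - 2 - x_i$. Equating the two expressions for $S$ and solving for $\sum_i x_i$ should yield the bound, and it will be tight (no slack), which is why the exact constant $\frac{\ell-1}{2}$ appears.

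Concretely, I would first rewrite the right-hand side of \Cref{eq:forced_lower_bound} using $k\ell = 2n-1$:
\[
k\sum_{i=1}^{\ell}\left(\frac{k+1}{2}\ell - i\right) = \frac{k\ell}{2}\bigl((k+1)\ell - (\ell+1)\bigr) = \frac{k\ell(k\ell-1)}{2} = (2n-1)(n-1),
\]
so $S \ge (2n-1)(n-1)$. For the exact value, by \Cref{prop:plane_subgraphs2} and the definitions of the $x_i$ (note $x_0 \ge 0$ since $\dist(e) \le d_1$ for any diagonal edge, and each $x_i \ge 0$ by \Cref{lem:plane_subgraphs1} applied to the two maximal, hence incomparable, forced edges in $D_i$),
\[
S = (d_1 - x_0) + \sum_{i=1}^{n-1}\bigl(2n - 2 - x_i\bigr) = d_1 + (n-1)(2n-2) - \sum_{i=0}^{n-1} x_i .
\]

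Combining the two displays gives $\sum_{i=0}^{n-1} x_i \le d_1 + (n-1)(2n-2) - (2n-1)(n-1) = d_1 - (n-1)$, and since $\frac{k+1}{2}\ell = \frac{k\ell + \ell}{2} = n + \frac{\ell-1}{2}$ we get $d_1 = \frac{k+1}{2}\ell - 1 = (n-1) + \frac{\ell-1}{2}$, so indeed $\sum_{i=0}^{n-1} x_i \le \frac{\ell-1}{2}$. There is no deep obstacle here beyond careful bookkeeping; the two points that genuinely need care are checking that every $x_i$ is nonnegative (so the computed identity really becomes an upper bound on the sum rather than a mere identity) and making sure \Cref{eq:forced_lower_bound} --- the one ingredient that need not be an equality --- is used in the correct direction. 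Its near-tightness is exactly what pins the answer down to $\frac{\ell-1}{2}$ instead of something weaker.
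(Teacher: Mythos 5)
Your proof is correct and follows essentially the same route as the paper's: both sum \Cref{eq:forced_lower_bound} against the exact decomposition of the forced-edge distances given by \Cref{prop:plane_subgraphs2} and the definitions of the $x_i$, then rearrange using $2n-1=k\ell$ and $d_1=(n-1)+\frac{\ell-1}{2}$. The only difference is cosmetic bookkeeping (you work with $n$ where the paper works with $k\ell$), so there is nothing further to add.
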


\begin{proof}
By \Cref{eq:forced_lower_bound}, we know that
\[
\dist(e) + \sum_{i=1}^{n-1} \left( \dist(e_i) + \dist(e_i') \right) \geq k\sum_{i=1}^{\ell} \left( \frac{k+1}{2}\ell - i \right) = k \cdot (k\ell-1) \frac{\ell}{2}
\]
holds for the sum of all forced diagonal edges. Plugging in $\dist(e) = \frac{k + 1}{2}\ell - 1 - x_0$ and $\dist(e_i) + \dist(e_i') = k\ell - 1 - x_i$, yields
\[
\frac{k + 1}{2}\ell - 1 + (n-1)(k\ell - 1) - \sum_{i=0}^{n-1} x_i \geq k\ell \frac{k\ell-1}{2}.
\]
Finally, plugging in $n = \frac{k\ell+1}{2}$ and rearranging terms
\[
\sum_{i=0}^{n-1} x_i \leq \frac{k + 1}{2}\ell - 1 + \frac{k\ell-1}{2}(k\ell - 1) - k\ell \frac{k\ell-1}{2} = \frac{\ell - 1}{2}
\]
provides the desired result.
\end{proof}

Let the forced diagonal edge $e$ in $D_0$ from now on connect groups $\mathcal{G}_\frac{k+1}{2}$ and $\mathcal{G}_k$ (similar to the maximal diagonal edge of $T_0$ in \Cref{sec:tree_disprove}). Further note that two endpoints of the two forced diagonal edges $e_i$ and $e_i'$ in $D_i$ ($i \neq 0$) must be contained in a common group $\mathcal{G}$, since otherwise $e_i$ and $e_i'$ would cross. We call the set of vertices in $\spn(e_i,e_i')$ that lie in this common group $\mathcal{G}$ the \emph{apex} of $D_i$ (again similar as in \Cref{sec:tree_disprove}). Especially note that all radial edges in $D_i$ ($i \neq 0$) must be contained in $\spn(e_i,e_i')$ (similar to \Cref{lem:bw_2_boundary}).

In the following description of \emph{additional} vertices we do not count $v_0$. Note that $\cl(e^+)$ contains exactly $(\frac{k-1}{2}\ell + 2 + x_0)$ vertices (those of the first $\frac{k-1}{2}$ groups, two outmost vertices, and $x_0$ extra vertices). Moreover, $\spn(e_i,e_i')$ ($i \neq 0$) contains exactly $(2 + 1 + x_i)$ vertices (two outmost vertices, one vertex in the apex, and $x_i$ extra vertices (possibly also in the apex, making its size larger than $1$)). We call the set of all extra vertices 
\emph{additional} vertices.

\begin{lemma}\label{lem:plane_subgraphs_inside_radial}
In any partition of $BW_{k,\ell}$ into $n = \frac{k\ell + 1}{2}$ plane subgraphs any inside radial edge of the last $\frac{k+1}{2}$ groups must either be incident to the apex or to an additional vertex of its subgraph.
\end{lemma}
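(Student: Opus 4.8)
The statement concerns a partition $D_0, \dots, D_{n-1}$ of $BW_{k,\ell}$ into $n$ plane subgraphs, and we want to show that every inside radial edge $\{v_0, v_j\}$ where $v_j$ lies in one of the last $\frac{k+1}{2}$ groups $\mathcal{G}_{\frac{k+1}{2}}, \dots, \mathcal{G}_k$ is, in its subgraph, incident either to the apex or to an additional vertex. Recall from \Cref{lem:bw_2_boundary} (and its analogue noted just before this lemma) that for $i \neq 0$ every radial edge of $D_i$ lies in $\spn(e_i, e_i')$, and that by the choice made just before the lemma, the forced edge $e$ of $D_0$ connects $\mathcal{G}_{\frac{k+1}{2}}$ and $\mathcal{G}_k$, so that $D_0$'s radial edges lie in $\cl(e^+)$, which only touches the last $\frac{k+1}{2}$ groups in their outmost vertices plus $x_0$ additional vertices. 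The plan is therefore to argue vertex by vertex: fix an inside radial edge $\{v_0, v_j\}$ with $v_j$ in one of the last $\frac{k+1}{2}$ groups, look at the subgraph $D_t$ that contains it, and show $v_j$ must be in the apex of $D_t$ or be one of its additional vertices.

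**The main case split.** First I would dispose of $t = 0$: since $\cl(e^+)$ contains, among the last $\frac{k+1}{2}$ groups, only the two outmost vertices of $\mathcal{G}_{\frac{k+1}{2}}$ and $\mathcal{G}_k$ (which are outmost, hence not inside) together with $x_0$ additional vertices, any inside radial edge of $D_0$ hitting one of these groups must hit an additional vertex of $D_0$ — done. Now suppose $t \neq 0$. Then $v_j \in V(\spn(e_t, e_t'))$ by the radial-edge containment property. The span $\spn(e_t, e_t')$ contains exactly $2 + 1 + x_t$ vertices: the two outmost vertices that are endpoints of $e_t, e_t'$, one apex vertex, and $x_t$ additional vertices (possibly themselves in the apex). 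Since $v_j$ is an inside vertex, it is not one of the two outmost endpoints; so $v_j$ is either the (single guaranteed) apex vertex, or one of the $x_t$ additional vertices. If the additional vertex lies in the common group it is in the apex; otherwise it is recorded as an additional vertex of $D_t$. Either way the conclusion holds.

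**Where the subtlety lies.** The only thing that needs a bit of care is making sure the bookkeeping of \emph{which} group plays the role of "the common group" for $D_t$ is consistent with the definition of apex and additional vertices given just before the lemma, and that an inside vertex in the last $\frac{k+1}{2}$ groups cannot accidentally be an endpoint of one of the forced edges $e_t, e_t'$ \emph{without} lying in the apex — but by definition the two outmost endpoints of $e_t$ and $e_t'$ are outmost vertices, so an inside vertex is never such an endpoint, and an inside endpoint of a forced edge, if it exists, would lie in the common group and hence in the apex. I would also note explicitly that the span $\spn(e_t,e_t')$ cannot contain an inside vertex from one of the last $\frac{k+1}{2}$ groups beyond those accounted for, because the total count $2+1+x_t$ is exact; this is the one place one leans on the earlier counting, not on any new geometric argument. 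I expect the proof to be short; the main (minor) obstacle is simply phrasing the correspondence between "incident to the apex / to an additional vertex" and the vertex-count decomposition of $\spn(e_t,e_t')$ cleanly, rather than any genuine difficulty.
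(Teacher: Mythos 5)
Your proof is correct and follows essentially the same route as the paper's: split into the cases $D_0$ and $D_i$ with $i\neq 0$, use that radial edges of a subgraph must lie in $\cl(e^+)$ respectively $\spn(e_i,e_i')$, and then read off from the vertex-count decomposition stated before the lemma that an inside vertex of the last $\frac{k+1}{2}$ groups can only be an apex or additional vertex. The extra bookkeeping you flag as a subtlety is handled implicitly in the paper by the same accounting, so there is no substantive difference.
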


\begin{proof}
Let $f$ be an inside radial edge of one of the last $\frac{k+1}{2}$ groups. If $f$ belongs to $D_0$, it is incident to an additional vertex (since it is not in the first $\frac{k-1}{2}$ groups and not incident to one of the two outmost vertices). On the other hand, if $f$ belongs to some $D_i$ with $i \neq 0$, it still cannot be incident to one of the two outmost vertices in $\spn(e_i,e_i')$, so it is either incident to the apex or an additional vertex.
\end{proof}

Finally we call the inside radial edges and inside vertices of the two groups $\mathcal{G}_\frac{k+1}{2}$ and $\mathcal{G}_k$ \emph{special} radial edges and \emph{special} vertices. Now we are ready to prove \Cref{thm:disprove_subgraphs}, which we again restate for easier readability:

\disproveSubgraphs*

\begin{proof}%[Proof of \Cref{thm:disprove_subgraphs}]
Note that there are exactly $2\ell - 4$ special radial edges. By \Cref{lem:plane_subgraphs_inside_radial} they must either be incident to an apex (in one of the two groups) or an additional vertex. Any subgraph with an apex in $\mathcal{G}_\frac{k+1}{2}$ or $\mathcal{G}_k$ must contain a forced edge between those two groups. By the definition of forced edges there are exactly $\ell$ of them between $\mathcal{G}_\frac{k+1}{2}$ and $\mathcal{G}_k$ and one of them is taken by $D_0$. So there are at most $\ell - 1$ apexes in $\mathcal{G}_\frac{k+1}{2}$ and $\mathcal{G}_k$ together. Finally, \Cref{prop:plane_subgraphs2a} gives an upper bound on the total number of additional vertices.
Hence, in total we can cover at most
\[
(\ell - 1) + \frac{\ell-1}{2} = \frac{3}{2}(\ell - 1)
\]
special radial edges with our $n$ subgraphs (note that we count only one vertex for each apex because if an apex is larger than 1, that is, contains additional vertices, we already accounted for that in the additional vertices bound). Therefore, whenever $\frac{3}{2}(\ell - 1) < 2\ell - 4$ holds, we cannot cover all edges. This inequality is equivalent to $\ell > 5$.
\end{proof}

For the case $\ell = 5$, we need to go even deeper into the structure of our plane subgraphs. 

\begin{restatable}{theorem}{disproveSubgraphsKFive}\label{thm:disprove_subgraphs_k5}
For any odd parameter $k \geq 5$, the edges of $BW_{k,5}$ cannot be partitioned into $n = \frac{5k + 1}{2}$ plane subgraphs.
\end{restatable}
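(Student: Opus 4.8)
The plan is to push the counting argument from the proof of \Cref{thm:disprove_subgraphs} one notch further, since for $\ell = 5$ the clean inequality $\tfrac32(\ell-1) < 2\ell - 4$ just barely fails (it becomes $6 < 6$). Recall that for $\ell = 5$ there are $2\ell - 4 = 6$ special radial edges, at most $\ell - 1 = 4$ apexes in the two opposite groups $\mathcal{G}_{\frac{k+1}{2}}$ and $\mathcal{G}_k$, and at most $\tfrac{\ell-1}{2} = 2$ additional vertices in total (by \Cref{prop:plane_subgraphs2a}). So the only way all $6$ special radial edges get covered is if this budget is met with \emph{equality} everywhere: all $\ell - 1 = 4$ forced edges between $\mathcal{G}_{\frac{k+1}{2}}$ and $\mathcal{G}_k$ not used by $D_0$ are the maximal diagonal edges of four distinct subgraphs whose apexes sit in these two groups, each such apex has size exactly $1$, and the two additional vertices are both special vertices incident to two further special radial edges. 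The strategy is to show this tight configuration is internally contradictory.

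First I would nail down the rigid structure forced by equality in \Cref{prop:plane_subgraphs2a}: $x_0 + \sum_{i\ge1} x_i = \tfrac{\ell-1}{2} = 2$, so almost all forced edges are paired into distance sums of exactly $2n-2$, and by \Cref{lem:plane_subgraphs_index_sums} a forced edge of distance $d_1$ can only be paired with one of distance $d_\ell$. Next I would look at the four subgraphs with apex in $\mathcal{G}_{\frac{k+1}{2}}\cup\mathcal{G}_k$: each uses a distinct one of the $\ell - 1$ remaining forced edges between these groups (of distances $d_2,\dots,d_\ell$ say, in some assignment, since $d_1$ is taken by $D_0$), and \emph{also} a second forced edge, which must go between one of these two groups and one of their respective opposite groups, with the index-sum constraint from \Cref{lem:plane_subgraphs_index_sums} pinning down the possible partners. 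The key point, mirroring the claim inside the proof of \Cref{thm:bw_k3_classification} and the opposite-pair obstruction in \Cref{thm:disprove_conjecture_general}, is that $\mathcal{G}_{\frac{k+1}{2}}$ and $\mathcal{G}_k$ being mutually opposite means every forced edge incident to an apex in one of them and running to its other opposite group competes for the same few slots; combined with the fact that the special (inside) radial edges of \emph{both} groups must all be accommodated, and that at most two of them can be rescued by additional vertices, one runs out of room. I expect that carefully accounting for which forced edges can serve as the apex-edge versus the second edge of each of these four subgraphs, and tracking the center radial edges of $\mathcal{G}_{\frac{k+1}{2}}$ and $\mathcal{G}_k$ (which cannot both be covered by apex-subgraphs, again via \Cref{prop:plane_subgraphs1a}/\Cref{prop:one_max_edge_per_distance} reasoning), yields the contradiction.

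Concretely the steps would be: (1) derive the equality conditions from \Cref{prop:plane_subgraphs2a} and conclude every apex in $\mathcal{G}_{\frac{k+1}{2}}\cup\mathcal{G}_k$ has size exactly $1$ and there are exactly four of them; (2) use \Cref{lem:plane_subgraphs_index_sums} to enumerate, for each of these four subgraphs, the distance of its apex-edge and the admissible distance/location of its second forced edge; (3) observe that the second forced edge of any such subgraph, together with the position of its apex, forces its radial edge(s) to lie in a specific group opposite to $\mathcal{G}_{\frac{k+1}{2}}$ or $\mathcal{G}_k$, and that these radial edges collide with the constraint that all six special radial edges (three inside radial edges in each of the two groups) get covered; (4) feed in the two-additional-vertex budget and show it is insufficient to patch the remaining uncovered special radial edges, a contradiction. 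The main obstacle I anticipate is step (2)–(3): the case analysis on how the four ``opposite-pair'' subgraphs distribute their apex-edges between $\mathcal{G}_{\frac{k+1}{2}}$ and $\mathcal{G}_k$ and where their second forced edges go is delicate, and one has to be careful that the argument does not secretly use $k = 3$ (it must work for all odd $k \ge 5$, with $BW_{3,5}$ genuinely excluded since it \emph{does} admit a partition into plane subgraphs). I would structure it as a short sequence of claims, each eliminating one sub-case, exactly in the spirit of the claim-and-claimproof blocks already used in the proof of \Cref{thm:bw_k3_classification}.
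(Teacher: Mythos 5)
Your setup is right and matches the paper: for $\ell=5$ the budget is tight ($6$ special radial edges versus $4$ apexes in $\mathcal{G}_{\frac{k+1}{2}}\cup\mathcal{G}_k$ plus $2$ additional vertices), so equality must hold in \Cref{prop:plane_subgraphs2a}, every additional vertex must be special, and every forced edge must attain its minimal distance, giving exactly one forced edge of each distance $d_1,\ldots,d_5$ per pair of opposite groups. But from there your plan looks for the contradiction in the wrong place. The six special radial edges \emph{can} be covered by the tight budget ($4+2=6$), and the local picture around $\mathcal{G}_{\frac{k+1}{2}}$ and $\mathcal{G}_k$ --- four apex-subgraphs, their second forced edges going to $\mathcal{G}_1$ and $\mathcal{G}_{\frac{k-1}{2}}$ respectively, two additional special vertices --- is essentially the same for $k=3$, where a partition exists. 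So no amount of case analysis confined to these two groups, their apexes, and their center radial edges can close the argument; in particular the ``both center radial edges cannot be apex-covered'' obstruction you import from the $\ell=3$ tree proof does not survive here, since there are $4$ available forced edges between the two groups and only $\ell-2=3$ inside vertices per group.

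The ingredient you are missing is the role of the \emph{middle} groups $\mathcal{G}_{\frac{k+3}{2}},\ldots,\mathcal{G}_{k-1}$, which exist only for $k\ge 5$. Their inside radial edges must also be apex-incident (no additional vertices are left for them, since all additional vertices are special), and the corresponding subgraphs consume \emph{all} forced edges of distances $d_2,d_3,d_4$ between every pair of opposite groups other than $(\mathcal{G}_1,\mathcal{G}_{\frac{k+1}{2}})$, $(\mathcal{G}_{\frac{k+1}{2}},\mathcal{G}_k)$, and $(\mathcal{G}_k,\mathcal{G}_{\frac{k-1}{2}})$. Only then does the final count work: the six forced edges of distances $d_2,d_3,d_4$ between $\mathcal{G}_1$--$\mathcal{G}_{\frac{k+1}{2}}$ and $\mathcal{G}_k$--$\mathcal{G}_{\frac{k-1}{2}}$ pairwise cross or share a pair (here $k\ge5$ is used), so each needs a partner from outside this set; by \Cref{lem:plane_subgraphs_index_sums} the $d_1$ edges absorb all but at most one $d_5$ edge, leaving only the three $d_2,d_3,d_4$ edges between $\mathcal{G}_{\frac{k+1}{2}}$ and $\mathcal{G}_k$ plus at most one $d_5$ edge as partners --- four partners for six edges. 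Without the middle-group step, those six edges could in principle pair with the $d_4$ or $d_5$ forced edges of the adjacent ``black-stripe'' pairs (e.g.\ $\mathcal{G}_1$--$\mathcal{G}_{\frac{k+3}{2}}$), and your counting would not close.
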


\begin{proof}
As before, we first consider the special radial edges, that is, the inside radial edges of the groups $\mathcal{G}_\frac{k+1}{2}$ and $\mathcal{G}_k$. Since $\ell = 5$, there are $6$ of them here. Furthermore, any subgraph that has its apex in $\mathcal{G}_\frac{k+1}{2}$ or $\mathcal{G}_k$ must use a forced edge between those two groups (the blue stripe in \Cref{app:fig:bw_k5}). However, because $D_0$ already uses one of these $5$ forced edges, there are at most $4$ apexes in $\mathcal{G}_\frac{k+1}{2}$ and $\mathcal{G}_k$ together and (by \Cref{prop:plane_subgraphs2a}) at most $\frac{5-1}{2} = 2$ additional vertices in total.

So in order to cover all special radial edges, all additional vertices (two in our case) must be special vertices. Furthermore, all forced edges must attain their minimal possible distance (recall that \Cref{prop:plane_subgraphs1a} was only guaranteeing a lower bound), since otherwise the bound in \Cref{prop:plane_subgraphs2a} concerning the additional vertices would get smaller. Hence, we have exactly one forced edge of each distance $d_1, \ldots, d_5$ between each pair of opposite groups.%(except in the pair $\mathcal{G}_\frac{k+1}{2}$ and $\mathcal{G}_k$ of course).

Consider now the inside radial edges in all groups from $\mathcal{G}_\frac{k+3}{2}$ to $\mathcal{G}_{k-1}$. By \Cref{lem:plane_subgraphs_inside_radial} and the fact that all additional vertices are special vertices, they must be incident to some apex. Also, since the opposite groups are between $\mathcal{G}_1$ and $\mathcal{G}_\frac{k-1}{2}$, it is not possible to place special vertices as additional vertices in the respective subgraphs. Hence, these subgraphs use up all forced edges of distances $d_2, d_3, d_4$ (in the black stripes in \Cref{app:fig:bw_k5}), except of course those between the pairs of opposite groups $\mathcal{G}_1$ and $\mathcal{G}_\frac{k+1}{2}$, $\mathcal{G}_\frac{k+1}{2}$ and $\mathcal{G}_k$, and $\mathcal{G}_k$ and~$\mathcal{G}_\frac{k-1}{2}$.

Since $k \geq 5$, each edge between $\mathcal{G}_1$ and $\mathcal{G}_\frac{k+1}{2}$ crosses every edge between $\mathcal{G}_k$ and $\mathcal{G}_\frac{k-1}{2}$ (the red stripes in \Cref{app:fig:bw_k5}). Furthermore, two forced edges between the same pair of opposite groups cannot be in the same subgraph (because they are both maximal). Hence, we have $6$ forced edges of distances $d_2,d_3,d_4$ (call them \emph{leftover} edges) between those two pairs of opposite groups, that we still need to pair with a second forced edge in their subgraph.

However, by \Cref{lem:plane_subgraphs_index_sums} all forced edges of distance $d_1$ (except one if used by $D_0$) need to be paired with a distance $d_5$ forced edge in their subgraph. This leaves the $3$ forced edges of distances $d_2, d_3, d_4$ between groups $\mathcal{G}_\frac{k+1}{2}$ and $\mathcal{G}_k$, and possibly one forced edge of distance $d_5$ to pair the leftover edges with. That is two less than what we would need.
\end{proof}

\begin{figure}
\centering
\includegraphics[scale=0.5,page=1]{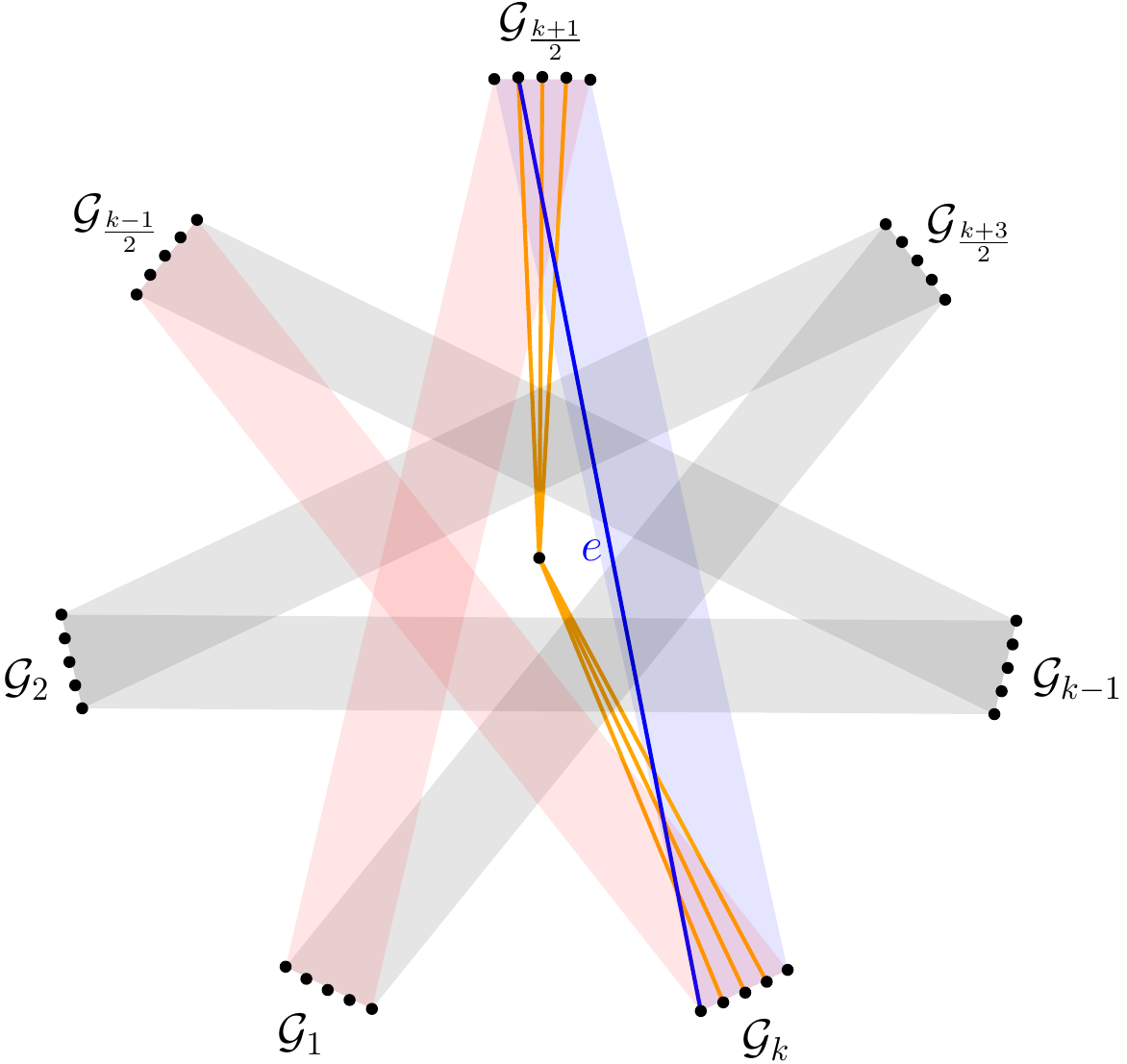}
\caption{High level overview of the proof of \Cref{thm:disprove_subgraphs_k5}. We have at most $\frac{5-1}{2} = 2$ additional vertices in total and the blue stripe (which contains the single forced edge $e$ of $D_0$) has to use both of them. Then, in the black stripes we must use all forced edges of distances $d_2, d_3, d_4$. However, since the two red stripes intersect ($k \geq 5$), there will not be enough forced edges to pair all 6 forced edges of distances $d_2, d_3, d_4$ from the red stripes.}
\label{app:fig:bw_k5}
\end{figure}

Finally, using \Cref{thm:main_spanning_trees}, it only remains to show that there is a partition for $BW_{3,5}$. However, in \Cref{sec:ILP} (see \Cref{fig:bw_35}) we already gave such a partition.

\section{Generalized Wheels}\label{sec:generalized_bw}

In this section we generalize our construction to non-regular wheel sets. We give a sufficient condition in the setting of plane spanning trees and a full characterization for partitioning into plane double stars. Recall that, for $N = [n_1, \ldots, n_k]$ and integers $n_i \geq 1$, $GW_N$ denotes the generalized wheel with group sizes $n_i$. Let us restate the result for the sake of readability:

\generalizedBW*

Note that the geometric regularity of generalized wheels is not strictly required (but eases the proofs). In fact, one can show that for any point set~$P$ (in general position) with exactly one point inside its convex hull, there is a generalized wheel with the exact same set of crossing edge pairs (and therefore the exact same set of plane subgraphs), %order type, %(that is, the complete graph on those two vertex sets has the exact same set of crossing edge pairs)
as we show in \Cref{app:sec:drop_geometry}.

The proof of \Cref{thm:generalized_bw} is analogous to that of \Cref{thm:disprove_conjecture_general}. We point out the differences in this section. Note here, that the condition in \Cref{thm:generalized_bw} is only a sufficient condition but not a necessary one. In fact, we found some generalized wheels not fulfilling the condition, that still cannot be partitioned into plane spanning trees (verified by computer assistance), for example, $GW_{[2,3,3,4,5]}$ cannot be partitioned.

Almost all results from \Cref{obs:maximal_edges_disjoint_vertices} to \Cref{prop:one_max_edge_per_distance} hold for generalized wheels as well, exactly the way they were stated for bumpy wheels.

One exception is that \Cref{lem:preliminary1} (\ref{lem:prel1:c}) only guarantees the presence of all radial edges along $\frac{k-1}{2}$ consecutive groups plus one in $T_0$. That is, we need to drop the exact bound on the total number of radial edges in $T_0$; which we only used to show (in \Cref{prop:preliminary2}) that there is only a single spanning tree in the partition with exactly one boundary edge. However, the only way there can be two such trees in the generalized setting is, when one group has size at least $n$; which means the condition of \Cref{thm:generalized_bw} is not fulfilled anyway. In other words, even though \Cref{lem:preliminary1} (\ref{lem:prel1:c}) changed, we can still apply it for the proof of the generalized setting as before.

Furthermore, \Cref{prop:one_max_edge_per_distance} does not guarantee a unique maximal diagonal edge (between opposite groups $\mathcal{G}_x, \mathcal{G}_y$) per distance anymore, but only \emph{at most} one (which is actually all we need). Also note that the value of the distances may now differ (depending on the group sizes); though only the number of different distances is important here, which is $\max(n_x, n_y)$.

Let us also briefly point out the difference in the proof of \Cref{prop:one_max_edge_per_distance}: we do not have exactly $j$ edges of distance $d_j$ between opposite groups but only at most $j$ (since some of them might be between other groups now). The arguments in the proof still apply however.
%To be precise, these changes come only into play for $j > \min(n_x, n_y)$.

For convenience we introduce a little more notation. For $j \in \{1,\ldots, k\}$, define (the indices of) the $\frac{k-1}{2}$ consecutive groups starting at $\mathcal{G}_j$ as $I_j$, that is, $I_j = \{j, j+1, \ldots, j+\frac{k-1}{2} - 1\}$ (as usual all indices are taken modulo~$k$). Then, \Cref{thm:generalized_bw} can be equivalently phrased as follows: $GW_N$ cannot be partitioned into plane spanning trees if for all $j \in \{1,\ldots, k\}$ the inequality $\sum_{i\in I_j} n_i < n - 2$ holds.

\begin{proof}[Proof of \Cref{thm:generalized_bw}]
First of all note that if there is an $i$ such that $n_i = 1$ (that is, a group consisting of only one vertex $v_1$), the condition of \Cref{thm:generalized_bw} cannot be satisfied: Consider the line through $v_0$ and $v_1$, then on each side there are $\frac{k-1}{2}$ consecutive groups and one side must contain a total of at least $n-1$ vertices. So, we can assume from now on that $n_i \geq 2$ for all $1 \leq i \leq k$.

The remainder of the proof is analogous to the proof of \Cref{thm:disprove_conjecture_general}. Let the condition in \Cref{thm:generalized_bw} be satisfied and assume for the contrary there was a partition $T_0, \ldots, T_{n-1}$. Again, we can have at most $k+1$ spanning trees containing an outmost radial edge.

Counting the trees not containing any outmost radial edge, we need to be a bit more careful now. We claim, that there are at most
\[ 
\sum_{i\in I_j}(n_i - 2)
\]
spanning trees not containing any outmost radial edge (for the $j$ maximizing this value). The arguments are exactly as before: each group $\mathcal{G}_i$ can contain the apex of at most $n_i - 2$ special wedges. And again by \Cref{prop:one_max_edge_per_distance}, instead of $\frac{k+1}{2}$ (by \Cref{lem:preliminary1} (\ref{lem:prel1:c}) the radial edges of at least $\frac{k-1}{2}$ consecutive groups are still taken by $T_0$; we just need to consider all possible choices now, because the groups have different sizes) we may sum only $\frac{k-1}{2}$ consecutive groups.

Hence, whenever
\[
k + 1 + \sum_{i \in I_j} (n_i - 2) < n
\]
holds for all $j$, we cannot find enough spanning trees. Rearranging terms, this inequality is equivalent to $\sum_{i \in I_j} n_i < n - 2$ (recall that $\lvert I_j \rvert = \frac{k-1}{2}$).
\end{proof}

\subparagraph*{Plane Double Stars.} Considering the other side of the story, \Cref{thm:double_stars} shows that many generalized wheels can already be partitioned into plane double stars\footnote{All double stars in this section are spanning (which we may not always spell out for easier readability).}:

\thmDoubleStars*

The proof requires several tools introduced in \cite{schnider2016packing}. In a first step we identify conditions under which a point set admits a so-called \emph{spine matching} --- the collection of spine edges from a partition into double stars (\Cref{app:thm:spine_matching}). Using these conditions we show (in \Cref{lem:bad_halfplanes}) that a generalized wheel $GW_N$ cannot be partitioned into plane double stars if and only if $GW_N$ has three \emph{bad halfplanes} whose intersection is empty (for a non-radial \emph{halving} edge $e$, the closure of $e^-$ defines a \emph{bad halfplane}). All details can be found in \Cref{app:sec:double_stars}.

We phrased \Cref{thm:double_stars} this way to make it consistent with \Cref{thm:generalized_bw}; however, let us rephrase it in a way that might better indicate the gap between the two theorems. To this end, define $F_i$ to be the \emph{family} of $\frac{k-1}{2}$ consecutive groups starting at $\mathcal{G}_i$ (in clockwise order). Two families $F_i$ and $F_{i+1}$ are called \emph{consecutive} and $|F_i|$ denotes the number of vertices in~$F_i$. If $|F_i| \leq n-2$ holds, we call $F_i$ \emph{small}, and otherwise \emph{large}.

\begin{corollary}
Let $GW_N$ be a generalized wheel with $k$ groups and $2n$ vertices. Then $GW_N$ \emph{can} be partitioned into plane spanning double stars if and only if there are $\frac{k-1}{2}$ consecutive families each containing (strictly) more than $n-2$ vertices.
\end{corollary}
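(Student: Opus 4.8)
The plan is to derive this statement directly from \Cref{thm:double_stars} by negation, using the auxiliary vocabulary (\enquote{small}/\enquote{large} families) just introduced. By \Cref{thm:double_stars}, $GW_N$ \emph{cannot} be partitioned into plane spanning double stars precisely when there exist three small families (three families $F_i$ with $|F_i| \le n-2$) such that every group lies in at least one of them. So $GW_N$ \emph{can} be partitioned into plane double stars iff \emph{no} such triple of small families exists, i.e.\ iff for every choice of three small families, some group is missed by all three. The first step is therefore to translate this \enquote{every triple of small families misses a group} condition into a statement purely about the cyclic arrangement of the large families.

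The key observation is that the $k$ families $F_1, \ldots, F_k$ are the $\frac{k-1}{2}$-element arcs of a cyclic sequence of $k$ positions, and the union of families $F_i$ over $i$ in some index set $S$ covers all $k$ groups iff the positions in $S$ form a \enquote{covering} configuration — concretely, iff no arc of $\lceil k/2 \rceil$ consecutive positions is free of every starting index in $S$ whose family would otherwise reach that arc. I would set this up carefully: a group $\mathcal{G}_g$ is \emph{not} covered by the families indexed by $S$ iff none of the indices $g, g-1, \ldots, g-\frac{k-1}{2}+1$ (mod $k$) lies in $S$, i.e.\ iff $S$ avoids an entire arc of $\frac{k-1}{2}$ consecutive indices ending at $g$. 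Hence three small families $F_{i_1}, F_{i_2}, F_{i_3}$ cover all groups iff $\{i_1,i_2,i_3\}$ hits every arc of $\frac{k-1}{2}$ consecutive indices; equivalently, the three gaps they cut the cycle into each have length $\le \frac{k-1}{2}$, i.e.\ length $< \frac{k-1}{2}+1 = \frac{k+1}{2}$. Since the three gap-lengths sum to $k$ (and $k$ is odd), \enquote{each gap $\le \frac{k-1}{2}$} can always be achieved by a suitable triple \emph{unless} the small families are confined to a short arc — more precisely, unless the large families occupy $\frac{k-1}{2}$ \emph{consecutive} positions, forcing every triple of small-family indices into a single arc of length $\le \frac{k+1}{2}$ and hence leaving a gap of length $\ge \frac{k-1}{2}+1$ uncovered.

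Making that last equivalence precise is where the real work lies, and I expect it to be the main obstacle. I would argue both directions. \emph{If} there are $\frac{k-1}{2}$ consecutive large families, then the small families are exactly those whose starting indices lie in a complementary arc of $\frac{k+1}{2}$ consecutive positions; any three such indices leave one of the two \enquote{wrap-around} gaps of total length $\ge \frac{k-1}{2}+1$, which contains an uncovered group — so no covering triple of small families exists, and by \Cref{thm:double_stars} a partition into double stars exists. \emph{Conversely}, if the large families do \emph{not} form a block of $\frac{k-1}{2}$ consecutive positions, I claim one can always pick three small families that cover all groups: partition the cyclic sequence of $k$ positions at three suitably chosen small-family indices so that each of the three resulting arcs has length $\le \frac{k-1}{2}$ — this is possible exactly because the small-family indices are \enquote{spread out} enough, which is the combinatorial heart of the matter and needs a short case analysis on the positions of the large families (in particular using that the number of large families is less than $\frac{k-1}{2}$, together with $k$ odd so that $\frac{k-1}{2}+\frac{k-1}{2}+1=k$). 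Then \Cref{thm:double_stars} yields non-partitionability, completing the contrapositive. The remaining bookkeeping — handling the degenerate cases where there are zero, one, or two large families, and checking the index arithmetic modulo $k$ — is routine and I would relegate it to a sentence or two.
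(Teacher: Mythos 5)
Your overall framework---negating \Cref{thm:double_stars} and translating \enquote{three small families cover all groups} into the statement that three starting indices hit every arc of $\frac{k-1}{2}$ consecutive positions on the $k$-cycle---is essentially the reduction the paper uses, and your forward direction is correct: a block of $\frac{k-1}{2}$ consecutive large families confines all small-family indices to the complementary arc of $\frac{k+1}{2}$ positions, so any three of them leave a cyclic gap of length at least $\frac{k+1}{2}$ and cannot cover. (Minor slip: the small families are \emph{contained in} that complementary arc, not \enquote{exactly} equal to it, since there may be further large families; this does not affect the argument. The paper phrases the same direction dually, via the single group $\mathcal{G}^\star$ common to the $\frac{k-1}{2}$ consecutive large families, every family through which is large.)

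The genuine gap is in the converse, which you yourself flag as the heart of the matter. Your proposed case analysis rests on the premise that \enquote{the number of large families is less than $\frac{k-1}{2}$}, but the hypothesis only forbids a \emph{run} of $\frac{k-1}{2}$ consecutive large families; it does not bound their number. For $k=9$, large families at positions $1,2,3,5,6,7$ give six large families (more than $\frac{k-1}{2}=4$) with no run of four, so an argument keyed to the count of large families misses such configurations. The usable reformulation of the hypothesis is instead: every arc of $\frac{k-1}{2}$ consecutive indices contains a small-family index, i.e.\ any two cyclically consecutive small-family indices are at distance at most $\frac{k-1}{2}$ (equivalently, every group lies in some small family). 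From this a greedy selection finishes the proof, and that is exactly what the paper does: pick any small family $F$, let $\mathcal{G}$ be the first group after $F$, choose among the small families containing $\mathcal{G}$ the one with least overlap with $F$ (call it $F'$), and repeat once more to obtain $F''$. The two extremal choices force $F''$ not to contain $\mathcal{G}$, whence each of the three cyclic gaps between the chosen indices is at most $\frac{k-1}{2}$ and $F\cup F'\cup F''$ covers all groups; \Cref{thm:double_stars} then gives non-partitionability. So your plan needs this greedy (or an equivalent extremal) argument substituted for the case analysis; as written, the proposed route for the converse would fail.
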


\begin{proof}
If, for the one direction, there are $\frac{k-1}{2}$ large consecutive families, then there is a group $\mathcal{G}^\star$ (namely the one that is contained in all these $\frac{k-1}{2}$ families) such that any family containing $\mathcal{G}^\star$ is large. In particular, there cannot be three small families covering all groups. Hence, by \Cref{thm:double_stars}, there is a partition into plane double stars.

On the other hand, if there are no $\frac{k-1}{2}$ large consecutive families, we can find three small families as follows. Note first that every group is contained in some small family. Pick a small family $F$ arbitrarily and let $\mathcal{G}$ be the first group after $F$ (in clockwise order). Among all small families containing $\mathcal{G}$, pick the one that is \enquote{furthest} from $F$, that is, has least overlap with $F$, and call it $F'$. Let $\mathcal{G}'$ again be the first group after $F'$ and among all small families containing $\mathcal{G}'$ pick the one furthest from $F'$ and call it $F''$. Since $F''$ cannot contain $\mathcal{G}$, we conclude that the three small families $F, F', F''$ cover all groups.
\end{proof}

\section{Conclusion}

We showed that there are complete geometric graphs that cannot be partitioned into plane spanning trees and gave a full characterization of partitionability for bumpy wheels (even in the much broader setting of partitioning into plane subgraphs). Also, for generalized wheel sets we gave sufficient and necessary conditions. There are two obvious directions for further research in this setting, that is, on the one hand one could try to further classify which point sets can be partitioned and which cannot (this might also be a useful approach towards attacking the question concerning the complexity of the decision problem whether a given complete geometric graph admits a partition into plane spanning trees). On the other hand, one can study partitions into broader classes of subgraphs, for example, $k$-planar or $k$-\quasiplanar subgraphs.

The following very intriguing question to determine \emph{how far} we may get from the $|P|/2$ bound is still open:

\begin{question}[\cite{ferran_2006}]
Can any complete geometric graph on $n$ vertices be partitioned into $\frac{n}{c}$ plane subgraphs for some constant $c > 1$?
\end{question}

%%
%% Bibliography
%%

%% Please use bibtex, 

\bibliography{../../partition_literature}

\appendix

\section{The ILP model}\label{sec:ILP}

Given a  geometric graph $G = (P, E)$ and a fixed number $m$ of available colors as input, our ILP contains a binary variable~$x_{e,c} \in \{0,1\}$ for each edge-color combination, that is, in our setting there are $\binom{2n}{2}\cdot m$ variables. A variable $x_{e,c}$ being 1 then corresponds to edge $e$ receiving color~$c$.

We implement the following constraints, enforcing that (\ref{constraint_1}) every edge receives exactly one color, (\ref{constraint_2}) crossing edges receive different colors, (\ref{constraint_3}) ensuring $2n-1$ edges in each color class, and (\ref{constraint_4}) forbidding monochromatic triangles (clearly, these are necessary but not sufficient constraints):

\setcounter{equation}{0}
\begin{align}
\label{constraint_1} 
\sum_{c=1}^m x_{e,c} = 1 \qquad &\forall e\in E \\[4pt] \label{constraint_2}
x_{e,c} + x_{f,c} \leq 1 \qquad &\forall c\in \{1,\ldots,m\};\  \forall e,f \text{ crossing} \\[4pt] \label{constraint_3}
\sum_{e \in E} x_{e,c} = 2n-1 \qquad &\forall c\in \{1,\ldots,m\} \\[4pt] \label{constraint_4}
x_{e,c} + x_{f,c} + x_{g,c} \leq 2 \qquad &\text{for each triangle } e,f,g;\ \forall c\in \{1,\ldots,m\}
\end{align}

Any input that cannot satisfy these constraints, can also not be partitioned into plane spanning trees. For $BW_{3,5}$ and $m=8$ as input, using an industry shaped ILP solver, the ILP turns out to be infeasible (taking less than a minute). Furthermore, for $BW_{3,7}$ and $m=11$ as input our program reports an infeasible ILP even when omitting the constraints~(\ref{constraint_3}) and~(\ref{constraint_4}) (taking roughly 5h). \Cref{fig:bw_35} shows a partition of $BW_{3,5}$ into plane subgraphs found by the program, when omitting the triangle constraint~(\ref{constraint_4}).

All experiments were run on an Intel Core i5, 1.6 GHz, 16 GB RAM running macOS Big Sur Version~11.4. All algorithms were implemented in Python 3.9.1, and for solving the ILP we used Gurobi Optimizer Version 9.1.2 with default settings. 

\begin{figure}[htb]
\captionsetup[subfigure]{labelformat=empty}
\centering
\begin{subfigure}{.33\textwidth}
\centering
\includegraphics[width=\textwidth,page=1]{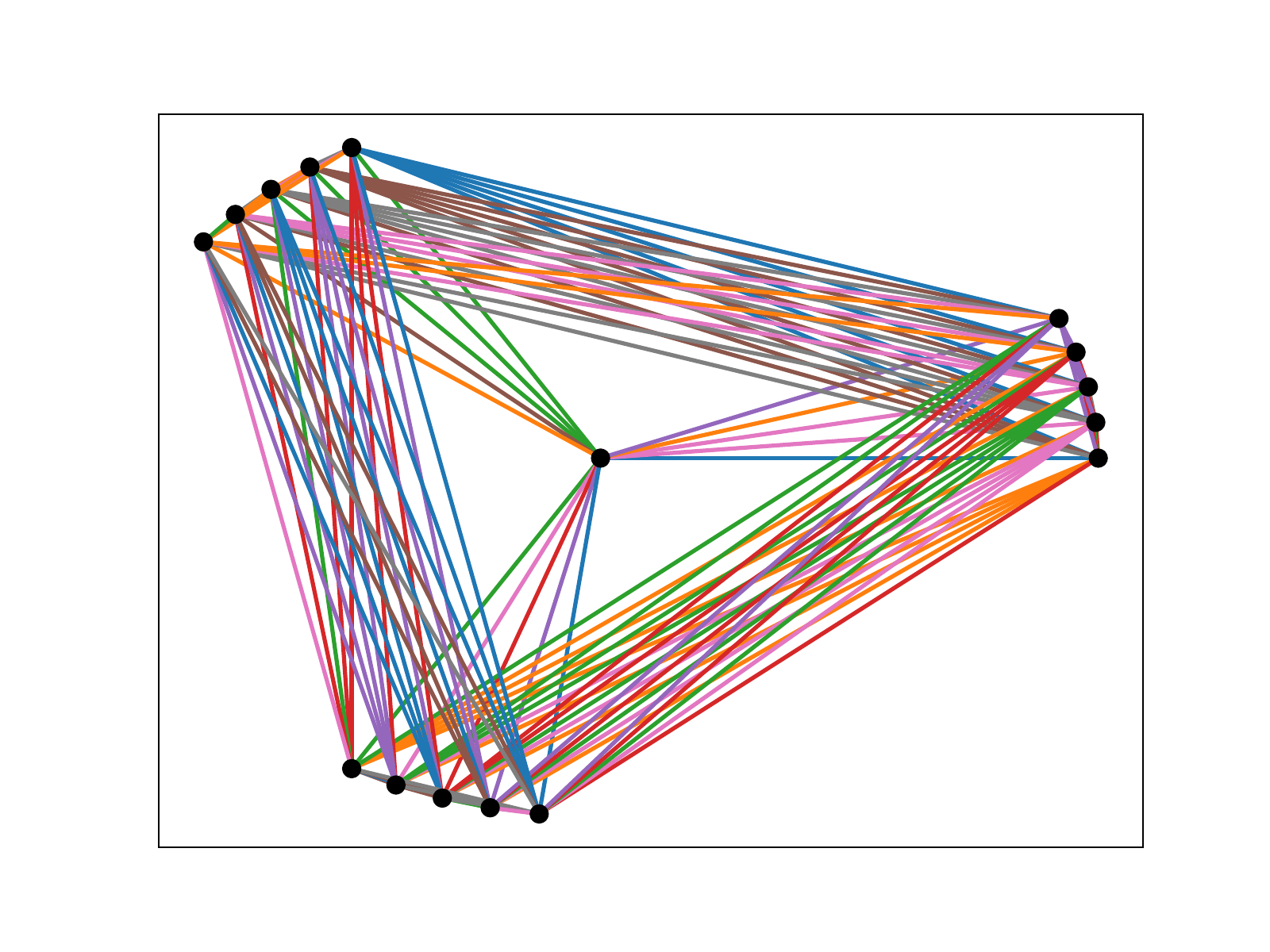}
%\caption{}
\end{subfigure}%
\begin{subfigure}{.67\textwidth}
\centering
\includegraphics[width=0.24\textwidth,page=2]{bw_35}
\includegraphics[width=0.24\textwidth,page=3]{bw_35}
\includegraphics[width=0.24\textwidth,page=4]{bw_35}
\includegraphics[width=0.24\textwidth,page=5]{bw_35} \\
\includegraphics[width=0.24\textwidth,page=6]{bw_35}
\includegraphics[width=0.24\textwidth,page=7]{bw_35}
\includegraphics[width=0.24\textwidth,page=8]{bw_35}
\includegraphics[width=0.24\textwidth,page=9]{bw_35}
%\caption{}
\end{subfigure}%
\caption{Partition of the bumpy wheel $BW_{3,5}$ into 8 plane subgraphs. A partition into plane spanning trees is not possible.}
\label{fig:bw_35}
\end{figure}

\section{Proof of \texorpdfstring{\Cref{thm:double_stars}}{} (partition into double stars)}\label{app:sec:double_stars}

Towards the proof of \Cref{thm:double_stars}, we need a couple of definitions and results from~\cite{schnider2016packing}. Given a partition of the edge set of a complete geometric graph into double stars, the collection of spines of the double stars forms a perfect matching, called the \emph{spine matching} (\cite{schnider2016packing},~Lemma~2).
Let $e$ be an edge between two points $p$ and $q$. The \emph{supporting line} $\ell_e$ of $e$ is the line through $p$ and $q$.
Let $e$ and $f$ be two edges and let $s$ be the intersection of their supporting lines. Recall that we say $e$ and $f$ \emph{cross}, if $s$ lies in both $e$ and $f$. If $s$ lies in $f$ but not in $e$, we say that $e$ \emph{stabs} $f$ and we call the vertex of $e$ that is closer to $s$ the \emph{stabbing vertex} of $e$. If $s$ lies neither in $e$ nor in $f$, or even at infinity, we say that $e$ and $f$ are \emph{parallel}.
A \emph{stabbing chain} are three edges, $e$, $f$ and $g$, where $e$ stabs $f$ and $f$ stabs $g$. We call $f$ the middle edge of the stabbing chain.
Note that a stabbing chain implies two interior points, so in our setting of wheel sets, there are no stabbing chains.

A \emph{cross-blocker} is a perfect matching on six points in wheel position (exactly one point in the convex hull), where the edge $e$ connecting to the interior point $v_0$ stabs both other edges $f$ and $g$, $f$ and $g$ cross, and $v_0$ is not in the convex hull of $f$ and $g$.

It is known that a spine matching can neither contain two parallel edges, nor a cross-blocker (\cite{schnider2016packing}, Theorem 9).\footnote{There is a third configuration that cannot occur, but this configuration requires a stabbing chain, so when only considering wheel sets, we may ignore this.}
On the other hand, it turns out that for wheel sets, these two configurations are the only two obstructions.
Recall the following Theorem from \cite{schnider2016packing}:

\begin{theorem}[Theorem 11 from \cite{schnider2016packing}]\label{app:thm:spine_matching}
Let $\mathcal{P}$ be a point set and let $M$ be a perfect matching on $\mathcal{P}$, such that
\begin{description}
\item[(a)] no two edges are parallel,
\item[(b)] if an edge $e$ stabs two other edges $f$ and $g$, then the respective stabbing vertices of $e$ lie inside the convex hull of $f$ and $g$, and
\item[(c)] if there is a stabbing chain, then the stabbing vertex of the middle edge lies inside the convex hull of the other two edges.
\end{description}
Then $M$ is a spine matching.
\end{theorem}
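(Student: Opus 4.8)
The theorem asserts the \emph{sufficiency} of (a)--(c): given a perfect matching $M=\{e_1,\ldots,e_n\}$ on $\mathcal{P}$ meeting the three conditions, one must produce a partition of the complete geometric graph on $\mathcal{P}$ into plane spanning double stars with spines exactly $e_1,\ldots,e_n$. My first step is to strip the problem down to a combinatorial choice. Fix a double star $D_{e_i}$ with spine $e_i=\{p,q\}$ and let $e_j=\{u,z\}$ be any other matching edge. The four ``cross edges'' $pu,pz,qu,qz$ lie in no double star other than $D_{e_i}$ and $D_{e_j}$, so they are split between these two; since $D_{e_i}$ must reach both $u$ and $z$ while $D_{e_j}$ must reach both $p$ and $q$, the only possibilities are that $D_{e_i}$ receives the perfect matching $\{pu,qz\}$ and $D_{e_j}$ receives $\{pz,qu\}$, or vice versa. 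Conversely, making such a choice independently for every unordered pair of matching edges always yields $n$ edge-disjoint spanning double stars covering all of $K(\mathcal{P})$. Hence the theorem is equivalent to the statement that these choices can be made so that every $D_{e_i}$ is plane.

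\textbf{Localizing planarity.} Every edge of $D_{e_i}$ is incident to $p$ or to $q$, two such edges sharing the same center never cross properly, and no external vertex is joined to both centers; so $D_{e_i}$ is plane if and only if, for every ordered pair $f,g$ of matching edges distinct from $e_i$, the $p$-pendant contributed by $f$ does not cross the $q$-pendant contributed by $g$. The diagonal case $f=g$ merely says that the cross matching chosen for the pair $\{e_i,f\}$ is itself non-crossing; using (a) (the supporting lines of $e_i$ and $f$ are not parallel, hence meet in a point $s$) together with a short case distinction on whether $e_i,f$ cross, one stabs the other, or they are ``disjoint'', one checks that at least one of the two cross matchings on $\{p,q\}\cup f$ is non-crossing, so a legal choice always exists for this case. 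The genuine content is the case $f\ne g$, which concerns only the at most six points lying on $e_i,f,g$.

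\textbf{Construction and verification.} I would fix a geometric selection rule: for the pair $\{e_i,f\}$ choose the cross matching sent to $D_{e_i}$ according to the position of $s=\ell_{e_i}\cap\ell_{f}$ relative to the segments $e_i$ and $f$ --- roughly, the non-crossing matching in which $p$ is joined to the endpoint of $f$ lying away from $p$ past $s$, and symmetrically for $q$ --- with ties broken consistently. With such a rule, planarity would be proven by contradiction: if $D_{e_i}$ contained crossing pendants $\{p,a\}$ (from $f$) and $\{q,b\}$ (from $g$) with $a\ne b$, then the three matching edges $e_i,f,g$ restricted to their (up to six) points, together with this crossing and with the way the rule placed $a$ and $b$, form either a \emph{cross-blocker} --- ruled out by (b) --- or a stabbing chain whose middle edge has its stabbing vertex outside the convex hull of the outer two --- ruled out by (c). (In the wheel-set applications there are no stabbing chains at all, so only (a) and (b) are used.) The diagonal case $f=g$ contradicts the non-crossing choice secured above.

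\textbf{Main obstacle.} As is typical for such characterizations, the crux is choosing the selection rule correctly and pushing through the case analysis above. It is cheap that each individual pair admits a non-crossing choice; the difficulty is \emph{coherence}: the choices for $\{e_i,f\}$ and $\{e_i,g\}$ must never conspire to create a $p$-pendant and a $q$-pendant that cross, and the only leverage for this is precisely the absence of parallel pairs, of cross-blockers, and of bad stabbing chains. I therefore expect the bulk of the work to be an elementary but exhaustive case analysis over the mutual positions of three matching edges --- who stabs whom, on which side of the triple intersection each endpoint lies, and where the interior point (if any) sits --- verifying in each case that a crossing inside some $D_{e_i}$ forces one of the three forbidden configurations. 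A cleaner route, for instance realizing the double stars by a single rotational sweep of directions or by a Hall-type argument on the $K_n$ whose vertices are the matching edges, would be preferable if one can be found, but I would not count on it.
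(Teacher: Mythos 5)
The paper does not actually prove this statement: it is quoted verbatim as Theorem~11 of the cited work of Schnider, so there is no internal proof to compare against. Judged on its own, your write-up gets the reduction exactly right --- a partition into spanning double stars with spine set $M$ is the same as choosing, for every unordered pair $\{e_i,e_j\}$ of matching edges, which of the two cross matchings between their endpoint sets goes to $D_{e_i}$ --- and it correctly localizes planarity to pairs of pendants at the two centers. But it stops short of a proof. The entire content of the theorem is the coherent selection rule and the case analysis showing that a crossing inside some $D_{e_i}$ forces a parallel pair, a cross-blocker, or a bad stabbing chain among $e_i,f,g$; you describe this rule only in a vague phrase (``the endpoint of $f$ lying away from $p$ past $s$''), announce the verification in the conditional mood, and explicitly defer ``the bulk of the work.'' Since a wrong rule can create crossings even when (a)--(c) hold, the rule and its verification cannot be waved at --- they are the theorem.

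There is also a concrete error in the one case you do claim to settle. ``Parallel'' in this context does not mean that the supporting lines fail to meet; it means their intersection $s$ lies in neither segment (or at infinity), which happens precisely when the four endpoints are in convex position with $e_i$ and $f$ as opposite sides --- and in that configuration one of the two cross matchings is the crossing diagonal pair, so \emph{no} legal split of the pair exists (whichever double star receives the crossing pair is non-plane). Your conclusion ``at least one of the two cross matchings is non-crossing, so a legal choice always exists'' is therefore false as stated: a legal choice requires \emph{both} cross matchings to be non-crossing, and that is exactly what condition (a) buys (crossing or stabbing pairs always have both cross matchings non-crossing; parallel pairs never do). You cite (a) but use it only to assert that the supporting lines meet, which is not its content.
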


In the setting of wheel sets, case (c) cannot occur, whereas cases (a) and (b) correspond exactly to the obstructions mentioned above.
We thus get the following characterization of generalized wheel sets that allow a partition into double stars (recall that we use $GW_N$ for the underlying point set and the complete geometric graph):

\begin{corollary}
A generalized wheel $GW_N$ can be partitioned into plane spanning double stars if and only if it admits a perfect geometric matching that contains neither two parallel edges nor a cross-blocker.
\end{corollary}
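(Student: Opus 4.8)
The plan is to prove the two directions of the equivalence separately, each packaging one of the two imported results of \cite{schnider2016packing}---their Theorem~9 for necessity and \Cref{app:thm:spine_matching} (their Theorem~11) for sufficiency---and exploiting throughout that a wheel set has only the single interior point $v_0$ and hence admits no stabbing chains.

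For the forward direction, I would assume $GW_N$ is partitioned into plane spanning double stars. Then by Lemma~2 of \cite{schnider2016packing} the spines of these double stars form a perfect geometric matching $M$, the spine matching, and by Theorem~9 of \cite{schnider2016packing} a spine matching contains none of the three forbidden configurations. The third one requires a stabbing chain and hence two interior points, which cannot occur in $GW_N$, so $M$ contains neither two parallel edges nor a cross-blocker, exactly as claimed.

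For the backward direction, I would take a perfect geometric matching $M$ of $GW_N$ avoiding both obstructions and verify the three hypotheses of \Cref{app:thm:spine_matching}. Hypothesis~(a) is precisely the absence of parallel edges, so it holds by assumption, and hypothesis~(c) holds vacuously since a stabbing chain would force two points strictly inside the convex hull whereas $GW_N$ has only $v_0$. The substance lies in hypothesis~(b): I must show that whenever an edge $e$ stabs two edges $f$ and $g$, the corresponding stabbing vertices of $e$ lie inside the convex hull of $f$ and $g$. Once (b) is established, \Cref{app:thm:spine_matching} certifies that $M$ is a spine matching, and since a spine matching is by definition the spine set of a partition into plane double stars, the partition follows.

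Reducing (b) to the absence of cross-blockers is the step I expect to be the main obstacle, and I would argue it contrapositively. If (b) fails, some edge $e$ stabs $f$ and $g$ while a stabbing vertex of $e$ lies outside the convex hull of $f$ and $g$, and I would show that the six endpoints involved then realize a cross-blocker. The key geometric step is that the stabbing edge $e$ must be the unique matching edge through $v_0$: if both endpoints of $e$ lay on the hull, then $\ell_e$ is a hull chord whose stabbing vertex falls on the side of $\ell_e$ facing the stabbed edge, so that an $e$ stabbing both $f$ and $g$ keeps its stabbing vertices inside their convex hull and (b) cannot fail. With $e=\{v_0,w\}$ fixed, the stabbing vertex lying outside the convex hull of $f$ and $g$ translates into $v_0$ lying outside it, and the combinatorics of a radial edge stabbing two matching edges in a wheel set forces $f$ and $g$ to cross---precisely the defining properties of a cross-blocker, contradicting the hypothesis on $M$. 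Making this reconstruction rigorous, in particular verifying that $f$ and $g$ must cross and that $v_0$ is excluded from their convex hull, is the only genuinely geometric part of the proof; the remainder is bookkeeping over the two cited theorems.
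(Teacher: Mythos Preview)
Your proposal is correct and follows essentially the same approach as the paper, which derives the corollary in one sentence from the preceding discussion: Theorem~9 of \cite{schnider2016packing} gives the forward direction, \Cref{app:thm:spine_matching} gives the backward one, condition~(c) is vacuous on wheel sets, and conditions~(a) and~(b) are identified with the two obstructions. You supply the details the paper leaves implicit, particularly the reduction of~(b) to the cross-blocker condition.

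One imprecision is worth tightening. Your justification that the stabbing edge $e$ must be the radial edge argues that if $e$ were a hull chord then its stabbing vertices would land inside $\operatorname{conv}(f,g)$, so~(b) could not fail. The cleaner and correct observation is stronger: two chords of a convex polygon either properly cross (endpoints alternate) or their supporting lines meet outside the polygon, so a hull--hull edge cannot stab anything at all. Hence only the radial edge $e=\{v_0,w\}$ can stab, and both stabbing vertices are necessarily~$v_0$. With this in hand your remaining steps go through: $f$ and $g$ are hull--hull, hence (by the same dichotomy and the assumed absence of parallel edges) they cross; and $v_0\notin\operatorname{conv}(f,g)$ together with $e$ stabbing both yields exactly a cross-blocker. (That the six points are in wheel position follows since $v_0$ lies on the segment from $w$ to the point $\ell_e\cap f$, hence in $\operatorname{conv}(\{w\}\cup f)$.)
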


Consider a generalized wheel $GW_N$ with $2n$ vertices and interior vertex $v_0$.
Let us try to construct a spine matching on $GW_N$.
To this end, we first connect $v_0$ to some other point with an edge $e$.
Note that the remaining points are now in convex position, so there is a unique matching on them without parallel edges, that is, the matching consisting of the halving edges of $GW_N\setminus\{v_0,v_1\}$.
Thus, for each choice of $e$ we get a unique possible matching, which we call a \emph{potential matching}, and this matching is a spine matching unless some edge is parallel to $e$ or there are two edges that do not have $v_0$ in their convex hull.
In the following, we investigate the conditions, under which these cases occur.

Consider a non-radial halving edge $h$ of $GW_N$. Then we call $\cl(h^-)$ a \emph{bad halfplane}.
Note that there might be no bad halfplanes, for example if $GW_N$ is a regular wheel.

\begin{lemma}
\label{lem:crosses_bad_hp}
Let $GW_N$ be a generalized wheel and assume it has a bad halfplane $B$ bounded by an edge $h$.
Assume $M$ is a spine matching on $GW_N$ which contains the edge $e=(v_0,v_1)$\footnote{We use $(x,y)$ instead of $\{x,y\}$ to denote edges in this section, trying to make it clearer that $GW_N\setminus\{x,y\}$ stands for removing two vertices from the graph.}.
Then the vertex $v_1$ lies in the bad halfplane $B$.
\end{lemma}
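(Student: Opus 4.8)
The plan is to argue by contradiction: assuming $v_1\notin B$, I will exhibit a \emph{cross-blocker} inside $M$, contradicting the fact that a spine matching contains none (recalled just before \Cref{app:thm:spine_matching}). Write $h=(a,b)$ for the non-radial halving edge with $B=\cl(h^-)$. Since $h$ is non-radial, $v_0\in h^+$; and if $v_1\notin B$ then $v_1$, being a hull vertex, cannot lie on the supporting line $\ell_h$ (otherwise $v_1\in\{a,b\}\subseteq B$), so in fact $v_1\in h^+$. As $M$ is a spine matching containing $e=(v_0,v_1)$, it must be the potential matching determined by $e$, i.e.\ $M=\{e\}\cup M'$ where $M'$ is the halving matching of the convex point set $GW_N\setminus\{v_0,v_1\}$.

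Next I locate the $M'$-partners $a^\ast$ of $a$ and $b^\ast$ of $b$. Removing $v_0$ and $v_1$ (both in $h^+$) from $GW_N$ leaves $n-1$ hull vertices in $h^-$ and only $n-3$ in $h^+$, with $a,b$ on $\ell_h$; hence in the $(2n-2)$-gon $GW_N\setminus\{v_0,v_1\}$ the antipode of $a$ (its halving partner) is the first hull vertex of the $h^-$-side past $b$. Thus $a^\ast$ is the $h^-$-side hull-neighbor of $b$ and, symmetrically, $b^\ast$ is the $h^-$-side hull-neighbor of $a$; in particular $a^\ast,b^\ast\in h^-$, and $a,b,a^\ast,b^\ast$ are four distinct hull vertices of $GW_N$ ($a^\ast\ne b^\ast$ since $M'$ is a matching). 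Along the convex hull of $GW_N$ these appear in the cyclic order $a,v_1,b,a^\ast,\dots,b^\ast$, so restricted to $\{a,a^\ast,b,b^\ast\}$ the order is $a,b,a^\ast,b^\ast$ and the two $M$-edges $f:=(a,a^\ast)$ and $g:=(b,b^\ast)$ interleave, hence cross.

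It remains to verify the stabbing/position conditions. Since $a,b\in\ell_h$ and $a^\ast,b^\ast\in h^-$, we get $\operatorname{conv}(f,g)=\operatorname{conv}\{a,b,a^\ast,b^\ast\}\subseteq\cl(h^-)$, so $v_0\notin\operatorname{conv}(f,g)$. Also $[v_0,v_1]\subseteq h^+$ while $[a,a^\ast]\subseteq\cl(h^-)$, so $e$ and $f$ are disjoint segments (hence do not cross), and likewise for $e$ and $g$. Moreover $f$ cannot stab $e$: the line $\ell_f$ meets $\operatorname{conv}(GW_N)$ precisely in the chord $[a,a^\ast]$ (its endpoints are hull vertices), which is disjoint from $[v_0,v_1]\subseteq\operatorname{conv}(GW_N)$, so no point of $\ell_e\cap\ell_f$ lies on $e$; similarly $g$ cannot stab $e$. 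Since a spine matching has no two parallel edges, the remaining alternative forces $e$ to stab both $f$ and $g$. Finally, $v_1,a,a^\ast,b,b^\ast$ all lie on the boundary of $\operatorname{conv}(GW_N)$ while $v_0$ is interior, and a short check of the five sides of the pentagon $\operatorname{conv}\{v_1,a,a^\ast,b,b^\ast\}$ (each side is either an edge of the hull of $GW_N$ or a chord whose cut-off arc avoids $v_0$) shows that $v_0$ lies in its interior, so the six points are in wheel position. Hence $v_0,v_1,a,a^\ast,b,b^\ast$, matched in $M$ by $e,f,g$, form a cross-blocker --- the desired contradiction. The main work is the partner computation of the middle paragraph (which is what forces $f$ and $g$ to cross) together with confirming the six points are in wheel position; the stabbing analysis is routine once one knows that $\ell_f$ meets $\operatorname{conv}(GW_N)$ only in $[a,a^\ast]$.
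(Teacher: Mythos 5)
Your proof is correct and follows the same strategy as the paper's (which is considerably terser): assuming $v_1\in h^+$, both endpoints of $h$ get matched into $h^-$ by the halving matching of $GW_N\setminus\{v_0,v_1\}$, and then $e,f,g$ form a cross-blocker once parallel edges are excluded. The only thin spot is the ``short check'' that $v_0$ lies inside the pentagon $\operatorname{conv}\{v_1,a,b,a^\ast,b^\ast\}$: for the two sides $[a,v_1]$ and $[v_1,b]$ the cut-off region lies in $\cl(h^+)$, which also contains $v_0$, so ``the cut-off arc avoids $v_0$'' is not automatic there; it does follow from the stabbing you already established (if $v_0$ were in the region cut off by $[a,v_1]$, then $\ell_e\cap\operatorname{conv}(GW_N)\subseteq\cl(h^+)$ would force the intersection point $\ell_e\cap\ell_f$ to lie in $f\cap\cl(h^+)=\{a\}$, contradicting general position), but you should say so explicitly --- noting that the paper itself skips the wheel-position verification entirely.
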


\begin{proof}
Assume for the sake of contradiction that $v_1$ does not lie in $B$.
Then, $GW_N\setminus\{v_0,v_1\}$ contains two more vertices in $B$ than the other (closed) side of $h$. In particular, both endpoints of $h$ (concerning the spine matching $M$) connect to vertices in $B$ through edges $f$ and $g$, since by the above arguments $f$ and $g$ need to be halving edges in $GW_N\setminus\{v_0,v_1\}$. But then $h$ separates $f$ and $g$ from $e$, and thus either one of them is parallel to $e$ or the three of them form a cross-blocker. So $M$ was not a spine matching, which is a contradiction.
\end{proof}

From this, we immediately get the following corollary:

\begin{corollary}
If $GW_N$ has three bad halfplanes whose intersection is empty, then $GW_N$ cannot be partitioned into plane spanning double stars.
\end{corollary}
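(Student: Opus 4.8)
The plan is to obtain this as an immediate consequence of \Cref{lem:crosses_bad_hp}, arguing by contradiction. Suppose, for contradiction, that $GW_N$ can be partitioned into plane spanning double stars. By the result of Schnider quoted above (\cite{schnider2016packing}, Lemma~2), the collection of spines of these double stars forms a spine matching $M$ --- a perfect geometric matching on the vertex set of $GW_N$. Since $M$ is perfect and $v_0$ is a vertex, exactly one edge of $M$ is incident to $v_0$; write it as $e = (v_0, v_1)$, where $v_1 \neq v_0$ is one of the hull vertices.

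Now let $B_1, B_2, B_3$ be the three bad halfplanes with $B_1 \cap B_2 \cap B_3 = \emptyset$, where each $B_j$ is the closure of $h_j^-$ for a non-radial halving edge $h_j$ of $GW_N$. I would then apply \Cref{lem:crosses_bad_hp} three times, once for each $j \in \{1,2,3\}$, each time to the \emph{same} spine matching $M$ (which indeed contains the radial edge $e = (v_0, v_1)$) and the bad halfplane $B_j$. Each application yields $v_1 \in B_j$. Therefore $v_1 \in B_1 \cap B_2 \cap B_3 = \emptyset$, which is absurd. Hence no spine matching on $GW_N$ exists, and consequently $GW_N$ admits no partition into plane spanning double stars.

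There is essentially no obstacle at this stage: all the substance lives in \Cref{lem:crosses_bad_hp} (and, upstream of it, in the characterization of the forbidden spine-matching configurations --- parallel edges and cross-blockers --- via \Cref{app:thm:spine_matching}). The only point to be careful about is that the three invocations of the lemma concern one and the same matching $M$, so that they all constrain the same partner vertex $v_1$ of $v_0$; this is immediate, since a perfect matching matches $v_0$ to a single vertex. The corollary is thus a short packaging step: three bad halfplanes with empty common intersection leave no admissible choice for the vertex matched to $v_0$.
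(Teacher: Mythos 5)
Your proof is correct and is essentially the paper's own argument: the paper likewise invokes Lemma~\ref{lem:crosses_bad_hp} to conclude that the vertex matched to $v_0$ in any spine matching must lie in every bad halfplane, hence in their (empty) intersection, a contradiction. Your version merely spells out the contrapositive packaging and the (immediate) point that all three applications of the lemma constrain the same partner vertex of $v_0$.
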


\begin{proof}
By Lemma \ref{lem:crosses_bad_hp}, the vertex connected to $v_0$ has to lie in the intersection of all bad halfplanes, implying that the intersection of all bad halfplanes has to be non-empty.
\end{proof}

We will now prove that the other direction holds as well.
For this we first need some preliminary lemmas.

\begin{lemma}
\label{lem:parallels}
Let $e=(v_0,v_1)$ be a halving edge of $GW_N$.
Then the potential matching defined by $e$ has no parallel edges.
\end{lemma}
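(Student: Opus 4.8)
Let $e = (v_0, v_1)$ be a halving edge of $GW_N$. Then the potential matching defined by $e$ has no parallel edges.

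The plan is to prove this by contradiction, exploiting the combinatorial structure of the halving edges of the convex point set $GW_N \setminus \{v_0, v_1\}$. First I would recall that once $e = (v_0, v_1)$ is fixed, the remaining $2n - 2$ points are in convex position, and the potential matching $M_e$ on them is precisely the set of halving edges of this $(2n-2)$-point convex set; in particular every edge of $M_e$ splits the remaining $2n - 4$ points evenly, $n - 2$ on each side. So I must rule out two configurations: (i) an edge of $M_e$ parallel to $e$ itself, and (ii) two edges of $M_e$ parallel to each other.

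For case (i), suppose some $g \in M_e$ is parallel to $e$. Being parallel means the supporting lines $\ell_e$ and $\ell_g$ do not meet inside either segment (or meet at infinity), so $g$ lies entirely in one of the two open halfplanes bounded by $\ell_e$. Since $e = (v_0, v_1)$ is a halving edge of $GW_N$, each side of $\ell_e$ contains exactly $n - 1$ of the remaining $2n - 2$ points; thus $g$'s side contains $n - 1$ points and the other side contains $n - 1$ points, but $g$ being a halving edge of $GW_N \setminus \{v_0, v_1\}$ forces $g$ to have $n - 2$ points of that set strictly on each side. As $g$ uses up two of the $n - 1$ points on its own side, that side has only $n - 3$ points left besides $g$'s endpoints, contradicting the requirement of $n - 2$. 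For case (ii), suppose $g, g' \in M_e$ are parallel to each other; then one of them, say $g$, lies in a halfplane bounded by $\ell_{g'}$ containing $n - 2$ points of $GW_N \setminus \{v_0, v_1\}$ on that side (since $g'$ is halving there), and the same counting — $g$ occupies two of those $n-2$, leaving $n - 4 < n - 2$ — rules out $g$ being halving. (One must double-check the boundary arithmetic for small $k$, but $k, \ell \geq 3$ keeps $n$ large enough.)

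The step I expect to be the main obstacle is making the counting in case (ii) fully rigorous: unlike $e$, which is anchored at the interior point $v_0$ and is a genuine halving edge of the \emph{full} set $GW_N$, the two edges $g, g'$ are only halving edges of the $(2n-2)$-point convex set, so I have to be careful that "parallel" really does place all of $g$ strictly to one side of $\ell_{g'}$ (handling the degenerate at-infinity case and the general-position hypothesis that no three points are collinear, which guarantees $g$'s endpoints are not on $\ell_{g'}$). Once the side on which $g$ lies is identified, the pigeonhole count "$n-2$ points on that side, minus the $2$ endpoints of $g$, leaves $n - 4$, but $g$ needs $n - 2$" closes the argument. I would also remark that this lemma is exactly the ingredient that lets us reduce, in the sequel, to checking only cross-blockers among the obstructions of \Cref{app:thm:spine_matching}, since parallels have now been eliminated whenever the seed edge $e$ is halving.
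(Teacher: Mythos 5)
Your overall strategy (pigeonhole counting against the halving property) matches the paper's in spirit, and your numbers are the right ones, but the decisive step does not follow as written: in both of your cases you count points on a side of $\ell_e$ (respectively $\ell_{g'}$), whereas the halving requirement you want to violate concerns the sides of $\ell_g$. Concretely, in case (i) you pass from ``$g$'s side of $\ell_e$ has only $n-3$ points of $Q:=GW_N\setminus\{v_0,v_1\}$ besides $g$'s endpoints'' to ``$g$ cannot have $n-2$ points of $Q$ on each of \emph{its} sides''. But the two supporting lines cross (at a point outside both segments), so each open side of $\ell_g$ meets both sides of $\ell_e$, and numerically $(n-3)+(n-1)=2n-4=(n-2)+(n-2)$: there is no contradiction until you prove that one specific side of $\ell_g$ draws \emph{all} of its points of $Q$ from the deficient pool. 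The missing (true) claim is that for the side $g^-$ not containing $v_0$, the points of $Q$ in $g^-$ form a contiguous hull arc between $g$'s endpoints that avoids $v_1$ (parallelism puts the whole segment $e$, hence $v_1$, into $g^+$), and such an arc cannot leave $g$'s side of $\ell_e$; only then does $\lvert g^-\cap Q\rvert\le n-3<n-2$ follow. The analogous repair in case (ii) additionally needs that $g'$'s endpoints do not lie in $g^-$, which is the content of \Cref{obs:maximal_edges_disjoint_vertices}. You do flag that something must be verified in case (ii), but you locate the difficulty in identifying which side of $\ell_{g'}$ contains $g$, which is not the actual issue.

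For comparison, the paper disposes of your case (ii) with no computation: by construction the potential matching restricted to the convex set $Q$ is its halving-edge matching, whose edges join points at cyclic distance $n-1$ in a $(2n-2)$-point cyclic order and hence pairwise cross, so only pairs involving $e$ can possibly be parallel. The same viewpoint yields the cleanest version of case (i): since $e$ is a halving edge of $GW_N$, each side of $\ell_e$ is a contiguous arc of exactly $n-1$ points of $Q$, and two points at cyclic distance $n-1$ cannot both lie in such an arc; hence every edge of the matching crosses $\ell_e$ and is therefore not parallel to $e$.
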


\begin{proof}
By the construction of the potential matching, there can only be pairs of parallel edges including $e$.
As any edge $f$ of the potential matching is a halving edge of $GW_N\setminus\{v_0,v_1\}$, if it was parallel to $e$, then $e$ would have more vertices on the side of $f$ as on the other side, so $e$ would not be a halving edge.
This is a contradiction.
\end{proof}

The following lemma follows from a standard rotation argument. Also remember that $GW_N$ contains $2n$ vertices.

\begin{lemma}
\label{lem:intersection}
Let $A$ be a non-empty intersection of bad halfplanes.
Then $A$ contains a vertex $v_i$ such that $(v_0,v_i)$ is a halving edge of $GW_N$.
\end{lemma}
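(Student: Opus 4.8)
Let $A$ be a non-empty intersection of bad halfplanes. Then $A$ contains a vertex $v_i$ such that $(v_0,v_i)$ is a halving edge of $GW_N$.

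The plan is to argue by a continuous rotation, tracking the balance of points on the two sides of a directed line through $v_0$. Fix a directed line $\ell$ through $v_0$ and rotate it by $\pi$; since $GW_N$ has $2n$ vertices, $v_0$ together with $2n-1$ boundary vertices, the $2n-1$ non-central points are split by $\ell$ into a left count and a right count summing to $2n-1$. As $\ell$ rotates, whenever it passes over one of the boundary points the difference (left count) $-$ (right count) changes by $2$; since $2n-1$ is odd, this difference is always odd, so it is never $0$, but it takes both positive and negative values over a half-turn (the configuration after rotating by $\pi$ is the mirror image). Hence there is a position where the difference is exactly $+1$ or $-1$, and at such a position $\ell$ passes through exactly one boundary vertex $v_i$ (general position), with $n-1$ points strictly on one side and $n-1$ on the other — that is, $(v_0,v_i)$ is a halving edge. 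This already shows halving edges through $v_0$ exist; the point is to find one landing inside $A$.

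To control \emph{where} the halving radial edge lands, I would not rotate freely but instead sweep only over the angular window corresponding to $A$. Concretely, pick the two extreme directions $\vec{u}_1, \vec{u}_2$ from $v_0$ into $A$ (the boundary rays of the cone $A$ as seen from $v_0$); since $A$ is non-empty and bounded by supporting lines of halving edges $h$ of $GW_N$, each bounding line $\ell_h$ passes through $v_0$? — no, it need not; $h$ is a halving edge of $GW_N$ and $\cl(h^-)$ is the bad halfplane, and the $h$'s need not be radial. So instead I would argue as follows: a radial edge $(v_0, v_i)$ lies in $A$ iff $v_i \in A$. Suppose for contradiction no such $v_i$ exists. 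Consider the two rays from $v_0$ bounding the cone spanned by $A$; rotating a directed line through $v_0$ from just-outside one side of this cone to just-outside the other, the signed balance changes by an \emph{even} amount only when crossing a boundary point, and by hypothesis no boundary point lies strictly inside the cone $A$. One then checks, using that each bad halfplane $B_j = \cl(h_j^-)$ contributes a deficit — $h_j$ is halving, so $h_j^-$ and $h_j^+$ each hold $n-1$ of the non-central points, hence $A$, being inside all the $h_j^-$, has "few" points on its far side — that the balance must pass through $\pm 1$ while the sweeping line points into $A$, forcing a halving radial edge into $A$, a contradiction. The careful bookkeeping is: at a direction pointing into the interior of $A$, the open halfplane on the $A$-side contains at most the points of $A$ which, since $A \subseteq \bigcap h_j^-$ and each $h_j$ is halving, is a strict subset of the $n-1$ points on the near side of each $h_j$; this caps the near-count below $n-1$ on one extreme of the window and symmetrically it exceeds $n-1$? — more precisely the standard move is that the balance is $+1$ somewhere in the window because it is nonnegative at one end and nonpositive at the other and changes by $2$ at point-crossings (of which there are none strictly inside $A$ by assumption), so it is \emph{constant} on the window, equal to $\pm1$, and then any boundary vertex the line passes through en route to that window — or the window's own endpoints — yields the halving radial edge with the desired landing.

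The main obstacle is making the "rotation argument" genuinely locate the halving edge \emph{inside} $A$ rather than merely proving one exists: one must relate the angular window subtended by $A$ at $v_0$ to the left/right balance, and exploit that $A$ is an intersection of bad — i.e. \emph{halving} — halfplanes so that the point counts on the two sides are tightly pinned at $n-1$. Once that dictionary is set up, the parity fact ($2n-1$ odd, balance always odd, balance changes by $2$ at each crossing, balance reverses sign over $\pi$) does the rest. I expect the clean way to present it is: define $g(\theta) = (\#\text{points strictly left of }\ell_\theta) - (\#\text{points strictly right})$ for the directed line $\ell_\theta$ through $v_0$; show $g$ is odd-valued and changes sign as $\theta \mapsto \theta+\pi$; then show that for $\theta$ with $\ell_\theta$ pointing into $A$ one has $g(\theta) \le 1$ while for the reflected direction $g \ge -1$, and a point-crossing happening strictly between these must lie in $A$ — either way a halving radial edge into $A$ is produced.
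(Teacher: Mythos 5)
Your overall strategy -- rotate a line through $v_0$, track the left/right balance, and localize the sweep to the angular window subtended by $A$ -- is the same as the paper's, but the two load-bearing steps are missing or incorrect. First, the boundary conditions at the ends of the window are never established. The paper takes $u$ and $w$ to be the first and last vertices of $GW_N$ lying in $A$ (in clockwise order), observes that each is an endpoint of one of the halving edges $h$ bounding $A$, and uses that $v_0$ lies in $h^+$ to conclude that the line through $v_0$ and $u$ has at least $n$ vertices on the $A$-side (namely the $n-1$ vertices of $h^-$ together with the other endpoint of $h$), and symmetrically at $w$. Your substitute for this -- ``each bad halfplane contributes a deficit'' and ``$g(\theta)\le 1$ for $\ell_\theta$ pointing into $A$'' -- is asserted but not derived, and you leave the decisive inequality with a literal question mark (``symmetrically it exceeds $n-1$?'').

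Second, the localization step is wrong as written. You claim that ``by hypothesis no boundary point lies strictly inside the cone $A$,'' but the contradiction hypothesis only excludes vertices $v_i\in A$ with $(v_0,v_i)$ \emph{halving}; it does not empty $A$ of vertices. Consequently your conclusion that the balance is constant on the window is unjustified, and it is moreover incompatible with the sign change you simultaneously require (a constant odd-valued function cannot be nonnegative at one end and nonpositive at the other). The correct mechanism is the opposite of what you describe: the window typically does contain crossing events, and the key observation is that the $A$-side count \emph{decreases only} when the forward ray passes a vertex of $A$ (and increases only when the antipodal ray passes a vertex on the far arc). Since that count is at least $n$ at $u$ and at most $n-1$ just before $w$, and changes by one at each event, the step from $n$ to $n-1$ must occur at a forward-ray event, i.e.\ at a vertex $v_i\in A$, and this $v_i$ is the desired halving endpoint. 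Your fallback of taking ``any boundary vertex the line passes through en route to that window'' cannot repair this, since such a vertex lies outside $A$ and therefore does not prove the lemma.
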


\begin{proof}
Note that the intersection of the bad halfplanes contains a vertex of $GW_N$.
Let $u$ and $w$ be the first and the last vertex of $GW_N$ in this intersection (in clockwise order). Note that $u$ and $w$ are both incident to one of the respective halving edges each. Then, since the bad halfplanes do not contain $v_0$ by definition and hence the corresponding radial edges define strict supersets on the contained vertices, $(v_0,u)$ contains at least $n$ vertices on the right side and $(v_0,w)$ contains at least $n$ vertices on the left side (that is, the side containing the vertices in $A$, respectively). Rotating a line $\ell$ through $v_0$ from $u$ to $w$ (alongside $A$) will therefore find a radial halving edge of $GW_N$ with endpoint in $A$ (because hitting a vertex in $A$ decreases the number of vertices on the right side of $\ell$, while hitting a vertex on the opposite side of $A$ subsequently increases that value).
\end{proof}

We are now ready to prove the other direction.

\begin{lemma}
If $GW_N$ cannot be partitioned into plane spanning double stars, then $GW_N$ has three bad halfplanes whose intersection is empty.
\end{lemma}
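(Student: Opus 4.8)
The plan is to prove the contrapositive: assuming $GW_N$ does \emph{not} have three bad halfplanes whose intersection is empty, I will exhibit a partition of $GW_N$ into plane spanning double stars. First I would observe that every bad halfplane $\cl(h^-)$ (bounded by a non-radial halving edge $h$) contains exactly $n+1$ of the $2n$ vertices of $GW_N$, namely the $n-1$ vertices strictly in $h^-$ together with the two endpoints of $h$; hence any two bad halfplanes share at least $2(n+1)-2n=2$ vertices, so in particular they intersect. Together with the hypothesis that any three bad halfplanes meet, Helly's theorem in the plane (applied when there are at least three, and trivial otherwise) then shows that the intersection $A$ of \emph{all} bad halfplanes is non-empty. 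Applying \Cref{lem:intersection} to $A$ yields a vertex, which we denote $v_1$, with $v_1\in A$ and $e:=(v_0,v_1)$ a halving edge of $GW_N$.

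Next I would form the potential matching $M$ determined by $e$, i.e.\ $e$ together with the unique parallel-free perfect matching on the convex point set $Q:=GW_N\setminus\{v_0,v_1\}$, which consists of the halving edges of $Q$. By \Cref{lem:parallels}, $M$ has no two parallel edges, and since wheel sets contain no stabbing chains, \Cref{app:thm:spine_matching} (together with the ensuing characterization of generalized wheels that admit a partition into double stars) reduces the whole proof to showing that $M$ contains no cross-blocker: once this is established, $M$ is a spine matching and the associated double stars form the desired partition.

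The remaining claim --- no cross-blocker --- is where I expect the real work to be, and I would argue it by contradiction. Suppose $M$ contained a cross-blocker, so that $e$ stabs two mutually crossing edges $f,g\in M$ with $v_0\notin\operatorname{conv}(f,g)$. I would then extract from this configuration a non-radial halving edge $h$ of $GW_N$ with $v_1\in h^+$, i.e.\ a bad halfplane $\cl(h^-)$ not containing $v_1$; this contradicts $v_1\in A$ and finishes the proof. To locate $h$, note that $f$ and $g$ are crossing halving edges of the $(2n-2)$-point convex set $Q$, which forces the four arcs of $Q$ cut off by their endpoints to contain $a,b,a,b$ vertices with $a+b=n-3$, and that $v_0\notin\operatorname{conv}(f,g)$ places $v_0$ in the cap region beyond exactly one side of the quadrilateral $\operatorname{conv}(f,g)$. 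The crucial extra leverage is that $e$ merely \emph{stabs} $f$ and $g$ (rather than crossing them): this pins down on which side of that cap the vertex $v_1$ lies, and a direct count over the $a,b,a,b$ arcs then produces a halving edge $h$ of $GW_N$ having both $v_0$ and $v_1$ on the same open side, so that $v_1\notin\cl(h^-)$. The main obstacle is carrying out this count cleanly through the case distinctions (which of the four quadrilateral sides $v_0$ lies beyond, and whether the stabbing vertices of $e$ with respect to $f$ and $g$ are $v_0$ or the far endpoints of $f$ and $g$) and checking that each case indeed yields a bad halfplane missing $v_1$.
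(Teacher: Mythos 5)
Your plan is, up to contraposition, the same argument the paper gives: both hinge on (i) Helly's theorem plus pairwise intersection of bad halfplanes to obtain a vertex $v_1$ in the common intersection with $(v_0,v_1)$ a halving edge (\Cref{lem:intersection}), (ii) \Cref{lem:parallels} to exclude parallel edges from the potential matching of $(v_0,v_1)$, and (iii) the claim that a cross-blocker in that matching yields a bad halfplane avoiding $v_1$. Steps (i) and (ii) you handle correctly, and your counting argument that any two bad halfplanes share at least two vertices is a clean justification of a fact the paper only asserts parenthetically.

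The one place you stop short is exactly step (iii), which you yourself flag as "the real work": as written, "no cross-blocker" is a plan rather than a proof. The paper closes this step without your four-way case analysis by first reducing to a cross-blocker whose two crossing matching edges $f,f'$ are \emph{consecutive} antipodal edges of $Q=GW_N\setminus\{v_0,v_1\}$, say $f=(q_i,q_{i+n-1})$ and $f'=(q_{i+1},q_{i+n})$. Then the unique minimal edge $h$ with $f<_c h$ and $f'<_c h$ --- the long side of the quadrilateral $\operatorname{conv}(f\cup f')$ that $v_0$ lies beyond, e.g.\ $h=(q_i,q_{i+n})$ --- is already the desired edge: its cap contains $n-3$ points of $Q$ together with $v_0$, while the other side contains $n-1$ points of $Q$; and the stabbing condition forces $v_1$ onto the $v_0$-side of $h$ (both $f$ and $f'$ lie in the closed halfplane of $h$ away from $v_0$, so if $v_1$ were there too, the supporting line of $e$ would meet $f$ and $f'$ between $v_0$ and $v_1$, i.e.\ cross rather than stab them). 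Hence $h$ is a non-radial halving edge of $GW_N$ with $v_1\in h^+$, giving the bad halfplane that misses $v_1$. Your sketched route (the $a,b,a,b$ arc count plus the observation that the ray from $v_1$ through $v_0$ must enter and leave the quadrilateral through opposite sides) leads to the same conclusion, but you need to either carry that count through or adopt the consecutive-pair reduction to make the argument complete.
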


\begin{proof}
Assume that $GW_N$ cannot be partitioned into plane double stars, that is, for every radial edge $e$ the resulting potential matching contains either an edge parallel to $e$ or two edges whose convex hull does not contain $v_0$.

Since every vertex must be incident to at least one halving edge (again using a standard rotation argument), we can now consider the potential matching $M$ defined by some radial halving edge $e=(v_0,v_1)$.
By Lemma \ref{lem:parallels}, $M$ has no parallel edges.
Since $M$ is not a spine matching by assumption and it does not contain any parallel edges, it has to contain a cross blocker. Further we can assume that the two diagonal edges $f,f'$ in this cross blocker are consecutive (in circular order), that is, the endpoints are consecutive along the convex hull. Then the unique minimal edge $h$ such that $f <_c h$ and $f' <_c h$ is a halving edge in $GW_N$. Hence, there exists a bad halfplane (which does not contain $v_1$).

We claim that the intersection of all bad halfplanes must be empty.
Indeed, if it was not empty, then by Lemma \ref{lem:intersection} the intersection would contain a point $v_i$ such that $(v_0,v_i)$ is a halving edge.
But then, by the above arguments, there is a bad halfplane which does not contain $v_i$, which is a contradiction.

As the halfplanes are convex, it now follows from Helly's theorem that if the whole family has empty intersection, then there are some three bad halfplanes whose intersection is already empty (also note that there cannot be two bad halfplanes with empty intersection).
\end{proof}

To summarize, we get:

\begin{corollary}\label{lem:bad_halfplanes}
A generalized wheel $GW_N$ cannot be partitioned into plane spanning double stars if and only if $GW_N$ has three bad halfplanes whose intersection is empty.
\end{corollary}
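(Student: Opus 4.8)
The statement of \Cref{lem:bad_halfplanes} is exactly the conjunction of the two implications established immediately above, so the plan is simply to assemble them into a biconditional. First I would invoke the earlier Corollary for the forward direction: if $GW_N$ has three bad halfplanes whose intersection is empty, then $GW_N$ admits no partition into plane spanning double stars. The mechanism there is \Cref{lem:crosses_bad_hp}, which forces the vertex matched to $v_0$ in any spine matching to lie in every bad halfplane; hence all bad halfplanes share a point, contradicting emptiness of the intersection.

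Next I would invoke the last Lemma for the converse: if $GW_N$ cannot be partitioned into plane spanning double stars, then some three bad halfplanes have empty intersection. Recall how that proof proceeds: one starts from a radial halving edge $e=(v_0,v_1)$ (which exists by a standard rotation argument), uses \Cref{lem:parallels} to rule out parallel edges in the potential matching it defines, and then concludes via \Cref{app:thm:spine_matching} that this potential matching fails to be a spine matching only through a cross-blocker, which in turn exhibits a bad halfplane not containing $v_1$. Combining this observation with \Cref{lem:intersection} shows that the whole family of bad halfplanes must have empty intersection, and Helly's theorem in the plane then reduces this to three of them.

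Chaining these two implications yields the biconditional of \Cref{lem:bad_halfplanes}, completing the proof. I expect no genuine obstacle here: the substantive work has already been carried out in the preceding lemmas, and the only thing to verify is that the hypothesis and conclusion of the earlier Corollary and of the final Lemma line up precisely with the two directions of \Cref{lem:bad_halfplanes}, which they do. (One also records in passing, as used in the argument above, that there cannot be two bad halfplanes with empty intersection, so that Helly's theorem applies in the stated form.)
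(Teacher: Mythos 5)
Your proposal is correct and matches the paper exactly: the corollary is stated there as a direct summary of the two preceding results (the corollary giving the "three bad halfplanes with empty intersection implies no partition" direction via \Cref{lem:crosses_bad_hp}, and the final lemma giving the converse via \Cref{lem:parallels}, \Cref{lem:intersection}, and Helly's theorem). No further argument is needed beyond combining the two implications, as you do.
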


Finally, we are ready to prove \Cref{thm:double_stars}, which we restate here for easier readability:

\thmDoubleStars*

\begin{proof}%[Proof of \Cref{thm:double_stars}]
Let $GW_N$ be a generalized wheel with $k$ groups and $2n$ vertices. For the proof it is more convenient to consider everything from the side of the (complementary) $\frac{k+1}{2}$ consecutive groups. That is, by \Cref{lem:bad_halfplanes}, it is enough to show that $GW_N$ contains three bad halfplanes whose intersection is empty, if and only if there are three families of $\frac{k+1}{2}$ consecutive groups, each containing at least $n+1$ vertices, such that no group is in all three families.

For the one direction, assume there are three bad halfplanes whose intersection is empty. Let $h_1, h_2, h_3$ be the three respective halving edges. Next, consider for each of the three halving edges, a maximal diagonal edge $f_i$ (of distance $d_1$) with $h_i <_c f_i$ (for $i = 1,2,3$). Clearly, the closure of each $f_i^-$ contains $\frac{k+1}{2}$ consecutive groups and at least $n+1$ points. It remains to show that there is no group contained in all $f_i^-$. To this end, note first that any pair of bad halfplanes overlaps, that is, contains vertices of $GW_N$ in their intersection. Therefore, the union of the three bad halfplanes covers the entire convex hull of $GW_N$, which also holds for the union of the three $\cl(f_i^-)$. Assume for the contrary that there is a common group in the intersection of the three maximal diagonal edges, that is, $\bigcap \cl(f_i^-)$ is non-empty. Let $x$ be a point in $\bigcap \cl(f_i^-)$ and $x'$ be the antipodal point. Then $x'$ lies in $f_i^+$ for any $i$ and hence the convex hull is not fully covered, a contradiction.

For the other direction, let $F_1,F_2,F_3$ be three families of $\frac{k+1}{2}$ consecutive groups each containing at least $n+1$ vertices such that no group is in all three families. Also let $f_1,f_2,f_3$ be the (maximal) diagonal edges bounding these families. Clearly, each $\cl(f_i^-)$ contains a halving edge $e_i$. It remains to show that the intersection of the corresponding bad halfplanes is empty. Note that a non-empty intersection of three bad halfplanes must also contain a vertex of $GW_N$. Then, the corresponding group would be a common group of the three~$F_i$'s.
\end{proof}

\section{Dropping the geometric regularity}\label{app:sec:drop_geometry}

In this section, we illustrate how to drop the geometric regularity of (generalized) wheel sets. Similarly to \Cref{sec:BW} we will use the notation $e^-$ for the (open) halfplane defined by (the supporting line through) $e$ and not containing $v_0$ (the only vertex inside the convex hull). Note that we do not need an even number of vertices here, so we consider point sets on $n$ vertices now (instead of $2n$).

\begin{theorem}
Let $P$ be a set of $n$ points in the plane (in general position) with exactly one vertex $v_0$ inside the convex hull (so $n \geq 4$). Then there exists a generalized wheel $GW_N$ having the same set of crossing edge pairs as $P$.
\end{theorem}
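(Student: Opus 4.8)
The plan is to reduce everything to two pieces of combinatorial data and then realize both of them by a generalized wheel. Label the hull vertices $v_1,\dots,v_{n-1}$ in clockwise order and let $v_0$ be the interior vertex; since the angular order of $v_1,\dots,v_{n-1}$ around $v_0$ agrees with their clockwise order on the hull, the line through $v_0$ and a hull vertex $v_i$ leaves the hull, on the side away from $v_i$, through a well‑defined \emph{gap} (a boundary edge between two consecutive hull vertices). Write $g(i)=j$ if this gap is the one between $v_j$ and $v_{j+1}$ (indices taken mod $n-1$). A standard rotation argument shows that $g$ is cyclically weakly increasing and $g(i)\notin\{i-1,i\}$. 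The first claim I would prove is that the order type of $P$, hence its set of crossing edge pairs, is determined by the clockwise order $v_1,\dots,v_{n-1}$ together with $g$: orientations of triples avoiding $v_0$ are fixed by convex position, and the orientation of a triple $(v_0,v_i,v_j)$ is determined by whether $j$ lies in the cyclic interval $\{i-1,i-2,\dots,g(i)+1\}$.

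Next I would analyse the structure of $g$. Consider the $2(n-1)$ directions around $v_0$: the $n-1$ forward directions $\overrightarrow{v_0v_i}$ (type $F$) and their antipodes $\overleftarrow{v_0v_i}$ (type $B$). This family of directions is centrally symmetric, so the antipodal map acts on the cyclic sequence as the shift by exactly $n-1$ slots, exchanging $F$ with $B$. The level sets of $g$ coincide with the maximal runs of $F$'s (a run breaks exactly at a gap actually used by $g$), so their number $k$ equals the number of distinct values of $g$. Encoding the cyclic word as a $0/1$ word $c$ on the cyclic index set $\{0,\dots,2(n-1)-1\}$ with $c_{p+(n-1)}=1-c_p$, a short parity computation gives that the total number of runs equals $2\rho$, where $\rho$ is the number of sign changes of the \emph{linear} word $c_0c_1\cdots c_{n-1}$; hence $\rho=k$, and since $c_{n-1}=1-c_0$ this number of changes is odd, so $k$ is odd. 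Moreover, if $L_1,\dots,L_k$ are the level sets in cyclic order, the antipodal involution maps the $F$-run of $L_s$ to the $B$-run lying in the gap at the clockwise end of $L_{s+m}$, where $m=\frac{k-1}{2}$: writing this involution on $F$-runs as a shift by $c$, squaring it forces $2c+1\equiv0\pmod k$, i.e.\ $c=m$. Equivalently, $g$ is constant on each $L_s$ with value equal to the gap separating $L_{s+m}$ from $L_{s+m+1}$.

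Finally I would build the wheel. Take $k$ groups of sizes $|L_1|,\dots,|L_k|$, place their nominal positions at the vertices of a regular $k$-gon around $v_0$ (legitimate because $k$ is odd), and perturb the $|L_s|$ points of group $s$ into convex position, $\varepsilon$-close, in the clockwise order prescribed by $L_s$, with $\varepsilon$ small enough that no point's antipodal direction leaves its nominal inter-group gap. For such a generalized wheel $GW_N$, the antipode of the position of group $\mathcal{G}_s$ is the midpoint of the arc between $\mathcal{G}_{s+m}$ and $\mathcal{G}_{s+m+1}$, so its $g$-function is constant on $\mathcal{G}_s$ with value the gap separating $\mathcal{G}_{s+m}$ from $\mathcal{G}_{s+m+1}$, which is exactly the value computed for $P$ in the previous paragraph. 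Thus $P$ and $GW_N$ share the same clockwise hull order and the same function $g$, so by the first claim they have the same set of crossing edge pairs.

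The main obstacle is the middle paragraph: showing that the number of distinct values of $g$ is odd and pinning down the shift $m$ in the relation between $F$-runs and $B$-runs. Everything else — the reduction of crossings to the chirotope, monotonicity of $g$, and the $\varepsilon$-perturbation argument — is routine. The whole construction turns on these two parity facts, which is precisely why a generalized wheel (an odd number of centrally balanced groups) is the right object to aim for.
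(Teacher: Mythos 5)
Your proposal is correct, and it follows the same overall strategy as the paper: your gap function $g$ is exactly the paper's \emph{opposite boundary edge} map ($g(i)=j$ iff $v_0\in\Delta v_iv_jv_{j+1}$), your level sets are the paper's groups, your reduction of the crossing pairs to the hull order together with $g$ is the paper's observation that the two point sets share a rotation system, and the final step (regular $k$-gon positions, $\varepsilon$-perturbation within groups) is the same construction. Where you genuinely diverge is in how the two central combinatorial facts are established, namely that the number $k$ of groups is odd and that the opposite boundary edge of a group $L_s$ is precisely the inter-group gap between $L_{s+m}$ and $L_{s+m+1}$ with $m=\frac{k-1}{2}$ (the paper's properties (P1) and (P3)). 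The paper proves these by induction on the number of hull vertices, inserting one vertex at a time and running a four-case analysis according to which region of $\Delta v_iuv$ contains $v_0$. You instead argue globally on the centrally symmetric cyclic sequence of the $2(n-1)$ directions $\pm\overrightarrow{v_0v_i}$: the antipodal involution is the shift by $n-1$, runs of $F$'s are the groups, the run count is twice the (odd) number of sign changes of a linear word whose endpoints differ, and squaring the induced shift on runs pins down $m=\frac{k-1}{2}$. Your argument is shorter and avoids the case analysis, at the cost of being less visibly geometric; the paper's induction, conversely, makes explicit how the group structure evolves under vertex insertion, which is occasionally useful elsewhere. Both are complete; I see no gap in your parity computation or in the $\varepsilon$-perturbation step.
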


\begin{figure}[htb]
\centering
\begin{subfigure}{.29\textwidth}
\centering
\includegraphics[width=\textwidth,page=2]{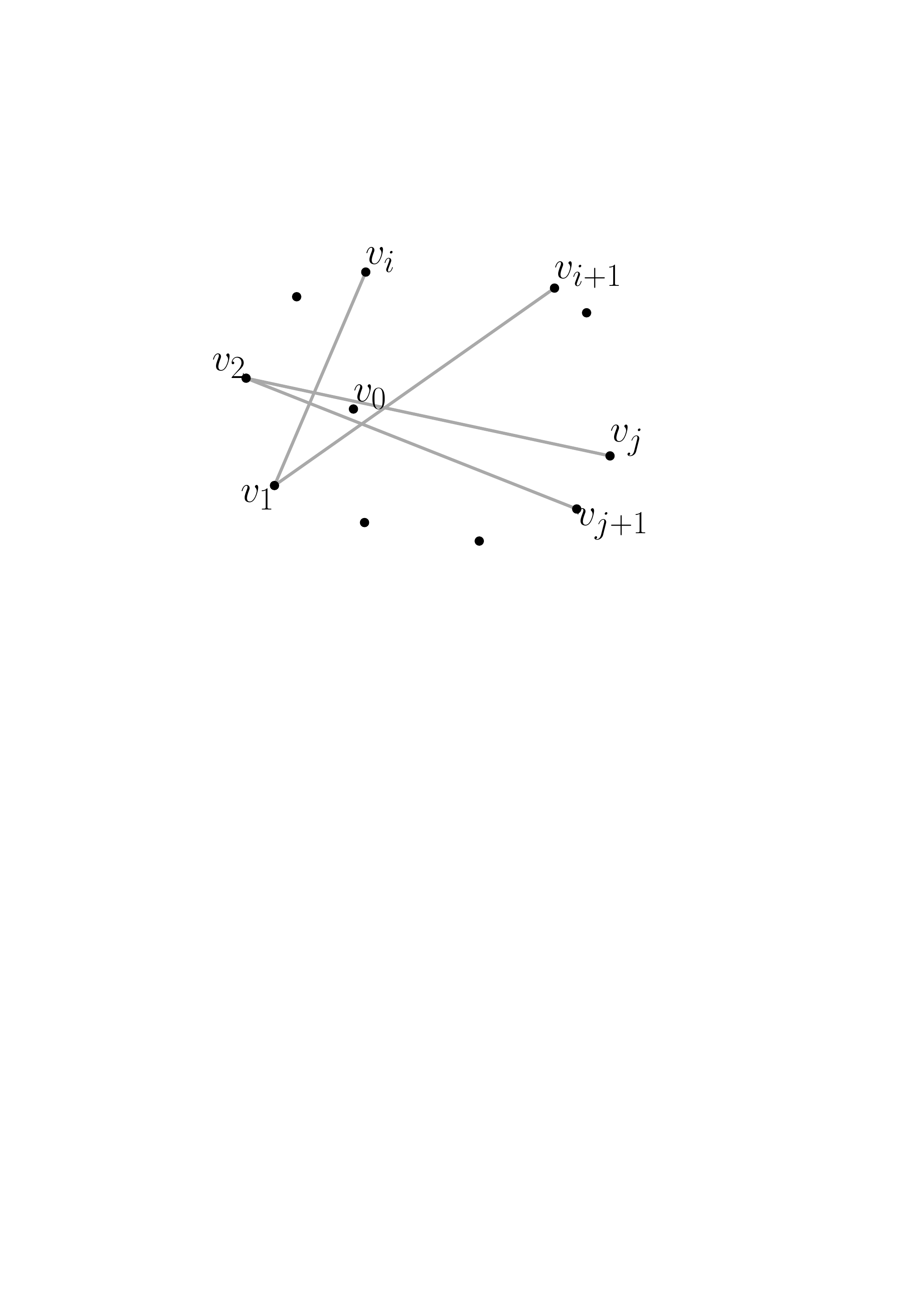}
\caption{\centering}
\label{fig:generalized_wheel_a}
\end{subfigure}\hspace{.05\textwidth}%
\begin{subfigure}{.29\textwidth}
\centering
\includegraphics[width=\textwidth,page=4]{generalized_wheel}
\caption{\centering}
\label{fig:generalized_wheel_b}
\end{subfigure}\hspace{.05\textwidth}%
\begin{subfigure}{.29\textwidth}
\centering
\includegraphics[width=\textwidth,page=3]{generalized_wheel}
\caption{\centering}
\label{fig:generalized_wheel_c}
\end{subfigure}
\caption{(a) Defining the groups (marked in green); opposite boundary edges drawn blue. (b) For any vertex $v_k$ between $v_i$ and $v_j$, the triangle $\Delta xuv$ has to be contained in $\Delta v_kuv$. (c) The corresponding generalized wheel.}
\label{fig:generalized_wheel}
\end{figure}

\begin{proof}
Denote the vertices on the convex hull by $v_1, \ldots, v_{n-1}$ in clockwise order (around~$v_0$) and for each vertex $v_i$ ($1 \leq i \leq n-1$) define the \emph{opposite boundary edge} (denoted by $\obe(v_i)$) as the unique boundary edge $\{v_j, v_{j+1}\}$ such that $v_0~\in~\Delta v_iv_jv_{j+1}$.
Further define the groups as sets of vertices having the same opposite boundary edge (see \Cref{fig:generalized_wheel_a}). We will see that these groups correspond precisely to the groups in the generalized wheel. Note that not all vertices can be in the same group (for example, the endpoints of $\obe(v_i)$ cannot be in the same group as $v_i$).

First, we show that each of those groups consists of consecutive vertices in~$P$ (along the convex hull). Let $v_i$, $v_j$ ($i < j$) be two vertices on the convex hull with $\obe(v_i) = \obe(v_j) (=\{u,v\})$. We claim that all vertices of~$P$ in $e^-$ of $e = \{v_i, v_j\}$ (w.l.o.g. $v_{i+1}, \ldots, v_{j-1}$) belong to the same group as $v_i$ (and $v_j$). Indeed, for any $v_k$ with $i<k<j$ we have $\Delta xuv \subseteq \Delta v_kuv$ (where $x$ is the intersection of $\{v_i,u\}$ and $\{v_j,v\}$); see \Cref{fig:generalized_wheel_b}. And since $v_0 \in \Delta xuv$, the claim follows.

Next, we show that for each opposite boundary edge $\obe(v_k) = \{u, v\}$ (of some hull vertex~$v_k$) the vertices $u$ and $v$ belong to different groups. Indeed, since $v_0~\in~\Delta v_kuv$, we get that $\obe(u)$ must lie in $\cl(e_v^-)$ of the edge $e_v = \{v, v_k\}$ (see also \Cref{fig:generalized_wheel_b}). Similarly, $\obe(v)$ must lie in $\cl(e_u^-)$ of the edge $e_u = \{u, v_k\}$. So clearly $u$ and $v$ have different opposite boundary edges.

Further, we show the following two properties by induction on the number of vertices $m = n-1$ on the convex hull:

\begin{enumerate}[(P1)]
\item\label{p1_geometric_regularity} $P$ defines an odd number of groups and
\item\label{p3_geometric_regularity} for each $v_i$, its opposite boundary edge $\obe(v_i)$ splits the remaining groups into equal parts with respect to the number of groups, that is, the line through $v_i$ and any point of (the interior of) $\obe(v_i)$ has equally many groups on each side (excluding the group containing $v_i$).
\end{enumerate}

For the base case $m = 3$ this is easy to verify (each of the $3$ vertices forms its own group). So, let $P$ be a point set with $m \geq 4$ hull vertices and consider $P'$ by removing an arbitrary hull vertex $v_i$ such that $P' = P\setminus{v_i}$ still contains exactly one point inside the convex hull (that is, $v_0 \not\in \Delta v_iv_{i+1}v_{i-1}$). By the induction hypothesis, $P'$ fulfills properties (P\ref{p1_geometric_regularity}) and (P\ref{p3_geometric_regularity}). Now, insert $v_i$ back in and let $\obe(v_i) = \{u,v\}$. Also consider the (crossing) edges $\{v_{i-1},u\}$ and $\{v_{i+1},v\}$ and call their intersection $x$; further denote their intersections with $\Delta v_iuv$ by $y$ and $z$ (see \Cref{fig:case_distinction}). Then, there are 4 different regions of $\Delta v_iuv$ that may contain $v_0$ (shaded gray in \Cref{fig:case_distinction}). We consider these cases separately:

\begin{figure}[htb]
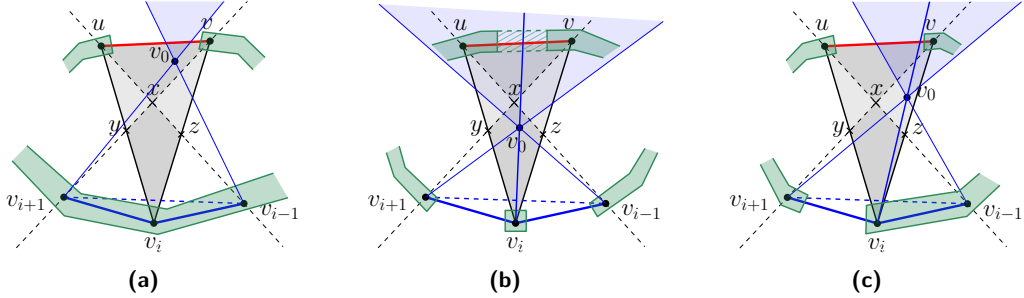

\centering
\begin{subfigure}{.29\textwidth}
\centering
\includegraphics[width=\textwidth,page=5]{generalized_wheel}
\caption{\centering}
\label{fig:case_distinction_a}
\end{subfigure}\hspace{.05\textwidth}%
\begin{subfigure}{.29\textwidth}
\centering
\includegraphics[width=\textwidth,page=6]{generalized_wheel}
\caption{\centering}
\label{fig:case_distinction_b}
\end{subfigure}\hspace{.05\textwidth}%
\begin{subfigure}{.29\textwidth}
\centering
\includegraphics[width=\textwidth,page=7]{generalized_wheel}
\caption{\centering}
\label{fig:case_distinction_c}
\end{subfigure}
\caption{Illustration of the case distinction. The blue shaded region always depicts the area of vertices having opposite boundary edge $\{v_{i-1},v_{i+1}\}$. (a)~Case~1:~$v_i$ joins the already existing group; everything else remains unchanged, since the blue region does not contain any vertex. (b) Case 2: $v_i$ forms a new group of size 1 and the group containing $u$ and $v$ is split into two parts (having $\{v_{i-1},v_{i}\}$ and $\{v_{i},v_{i+1}\}$ as opposite boundary edges now; instead of $\{v_{i-1},v_{i+1}\}$). (c) Case 3: $v_i$ joins the group of $v_{i-1}$ and any vertex that had $\{v_{i-1},v_{i+1}\}$ as opposite boundary edge (the group containing $v$) has now $\{v_{i},v_{i+1}\}$ instead. Here it is crucial to note that the left cone of the blue shaded region does not contain any vertex (otherwise the corresponding group in the shaded region would be split apart by the insertion of $v_i$).}
\label{fig:case_distinction}
\end{figure}

\begin{description}
\item Case 1: $v_0 \in \Delta xuv$.

Here $v_{i-1}$ and $v_{i+1}$ belong to the same group with opposite boundary edge $\{u,v\}$. So, $v_i$ joins this already existing group. Moreover, the edge $\{v_{i-1}, v_{i+1}\}$ is replaced by the two edges $\{v_{i-1}, v_{i}\}$, $\{v_{i}, v_{i+1}\}$. However, due to the convex position of the hull vertices there cannot be any vertex having any of these as opposite boundary edge (see \Cref{fig:case_distinction_a}). In total, the number of groups remains unchanged, hence (P\ref{p1_geometric_regularity}) holds. Further (P\ref{p3_geometric_regularity}) holds by induction hypothesis (clearly for any vertex other than $v_i$, and for $v_i$ since it has the same opposite boundary edge as $v_{i-1}$ and $v_{i+1}$).

\item Case 2: $v_0 \in \lozenge v_iyxz$.

In this case, $\obe(v_{i-1})$ lies in $e_u^-$ of the edge $e_u = \{u, v_i\}$ and $\obe(v_{i+1})$ lies in $e_v^-$ of the edge $e_v = \{v, v_i\}$. Hence, $v_{i-1}$ and $v_{i+1}$ are in different groups already in $P'$ and in addition $v_i$ forms a new group of size one in $P$. Furthermore, $u$ and $v$ belong to the same group in $P'$ (with opposite boundary edge $\{v_{i-1}, v_{i+1}\}$). However, after inserting $v_i$, this group will be split into two parts --- the $u$ part with opposite boundary edge $\{v_i, v_{i-1}\}$ and the $v$ part with opposite boundary edge $\{v_i, v_{i+1}\}$ (see \Cref{fig:case_distinction_b}). All other groups stay the same. So in total, the number of groups increases by two, that is, it remains odd (confirming (P\ref{p1_geometric_regularity})).

Concerning (P\ref{p3_geometric_regularity}), each vertex in the group containing $u$ gains precisely one group on each side of its line through the new opposite boundary edge $\{v_i, v_{i-1}\}$, namely the group containing $v_i$ on the one side and the group containing $v$ on the other side. The same holds for each other vertex in $e_u^-$ (without any change to the opposite boundary edge). Similarly, the vertices in the group containing $v$ and all other vertices in $e_v^-$ get exactly one additional group on each side as well. Finally, $v_i$ fulfills (P\ref{p3_geometric_regularity}) because (P\ref{p3_geometric_regularity}) holds for $u$ (and $v$) in $P'$ and any line through $v_i$ and (the interior of) $\{u, v\}$ has the same groups on each side (in $P$) as any line through $u$ and (the interior of) $\{v_{i-1}, v_{i+1}\}$ (in $P'$), with the addition of the group containing $u$ on the one side and the group containing $v$ on the other side. Hence, also (P\ref{p3_geometric_regularity}) holds for all vertices.

\item Case 3: $v_0 \in \Delta xvz$.

Similar as before, $v_{i-1}$ and $v_{i+1}$ belong to different groups (already in $P'$). However, $v_i$ now joins the group of $v_{i-1}$ (in $P$). Moreover, the group that had $\{v_{i-1},v_{i+1}\}$ as opposite boundary edge (that is, the group containing $v$), now has the opposite boundary edge $\{v_{i},v_{i+1}\}$ and there is no vertex having $\{v_{i-1},v_{i}\}$ as opposite boundary edge (see \Cref{fig:case_distinction_c}). Therefore, the total number of groups remains unchanged (confirming (P\ref{p1_geometric_regularity})). Further, (P\ref{p3_geometric_regularity}) still holds for all vertices by induction hypothesis (also for $v_i$, since it has the same opposite boundary edge as $v_{i-1}$) because the groups remain unchanged except for $v_i$ joining the group of $v_{i-1}$ and the opposite boundary edge $\{v_{i-1},v_{i+1}\}$ getting replaced by $\{v_{i},v_{i+1}\}$.

\item Case 4: $v_0 \in \Delta xyu$.

This is analogous to Case 3 (just laterally reversed).
\end{description}

Finally, by property (P\ref{p1_geometric_regularity}) we can define the generalized wheel $GW_N$ (recall that we need an odd number of groups here) with the exact same group sizes in the same circular order as defined for $P$ (see \Cref{fig:generalized_wheel_c}). Further, by property (P\ref{p3_geometric_regularity}) we know that for each vertex $v_i$ on the convex hull, $\obe(v_i)$ is the same for $P$ and $GW_N$ (that is, for both point sets $v_0$ appears at the same spot in the rotation around $v_i$). So the two point sets have the same rotation system and therefore clearly also the same set of crossing edge pairs.
\end{proof}

\section{Additional Figures}\label{app:sec:additional_figures}

\begin{figure}[htb]
\centering
\includegraphics[width=0.49\textwidth,page=1]{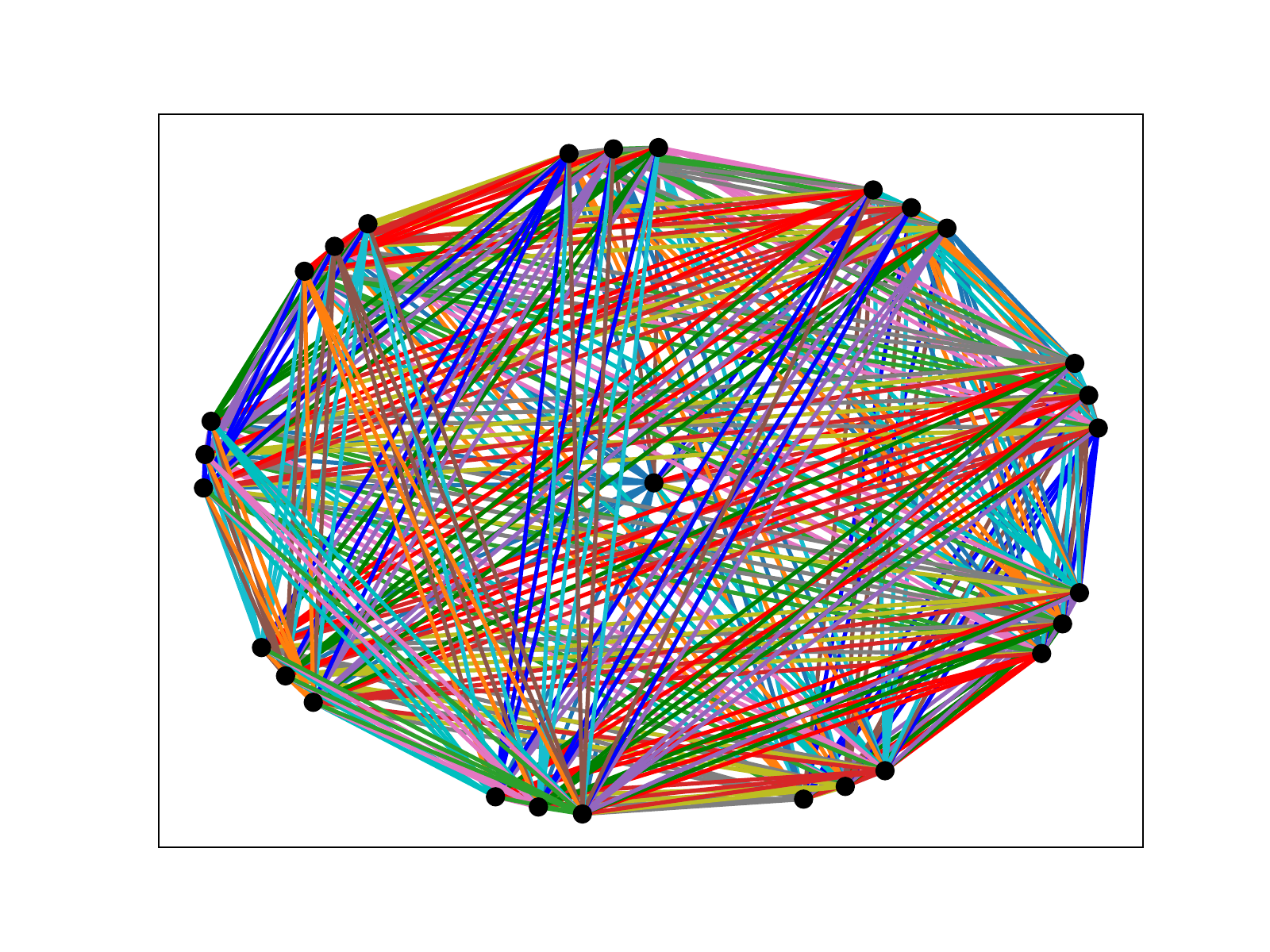}
\includegraphics[width=0.24\textwidth,page=2]{ILP_solutions/k3/three_base_cases/case1}
\includegraphics[width=0.24\textwidth,page=3]{ILP_solutions/k3/three_base_cases/case1} \\
\includegraphics[width=0.24\textwidth,page=4]{ILP_solutions/k3/three_base_cases/case1}
\includegraphics[width=0.24\textwidth,page=5]{ILP_solutions/k3/three_base_cases/case1}
\includegraphics[width=0.24\textwidth,page=6]{ILP_solutions/k3/three_base_cases/case1}
\includegraphics[width=0.24\textwidth,page=7]{ILP_solutions/k3/three_base_cases/case1} \\
\includegraphics[width=0.24\textwidth,page=8]{ILP_solutions/k3/three_base_cases/case1}
\includegraphics[width=0.24\textwidth,page=9]{ILP_solutions/k3/three_base_cases/case1}
\includegraphics[width=0.24\textwidth,page=10]{ILP_solutions/k3/three_base_cases/case1}
\includegraphics[width=0.24\textwidth,page=11]{ILP_solutions/k3/three_base_cases/case1} \\
\includegraphics[width=0.24\textwidth,page=12]{ILP_solutions/k3/three_base_cases/case1}
\includegraphics[width=0.24\textwidth,page=13]{ILP_solutions/k3/three_base_cases/case1}
\includegraphics[width=0.24\textwidth,page=14]{ILP_solutions/k3/three_base_cases/case1}
\includegraphics[width=0.24\textwidth,page=15]{ILP_solutions/k3/three_base_cases/case1}
\caption{Full partition of $BW_{9,3}$ into 14 plane spanning trees according to the construction in case 1 of \Cref{thm:bw_k3_classification} (generated by the computer assisted ILP).}
\label{fig:ilp_solutions_bw_93_case1}
\end{figure}

\begin{figure}[htb]
\centering
\includegraphics[width=0.49\textwidth,page=1]{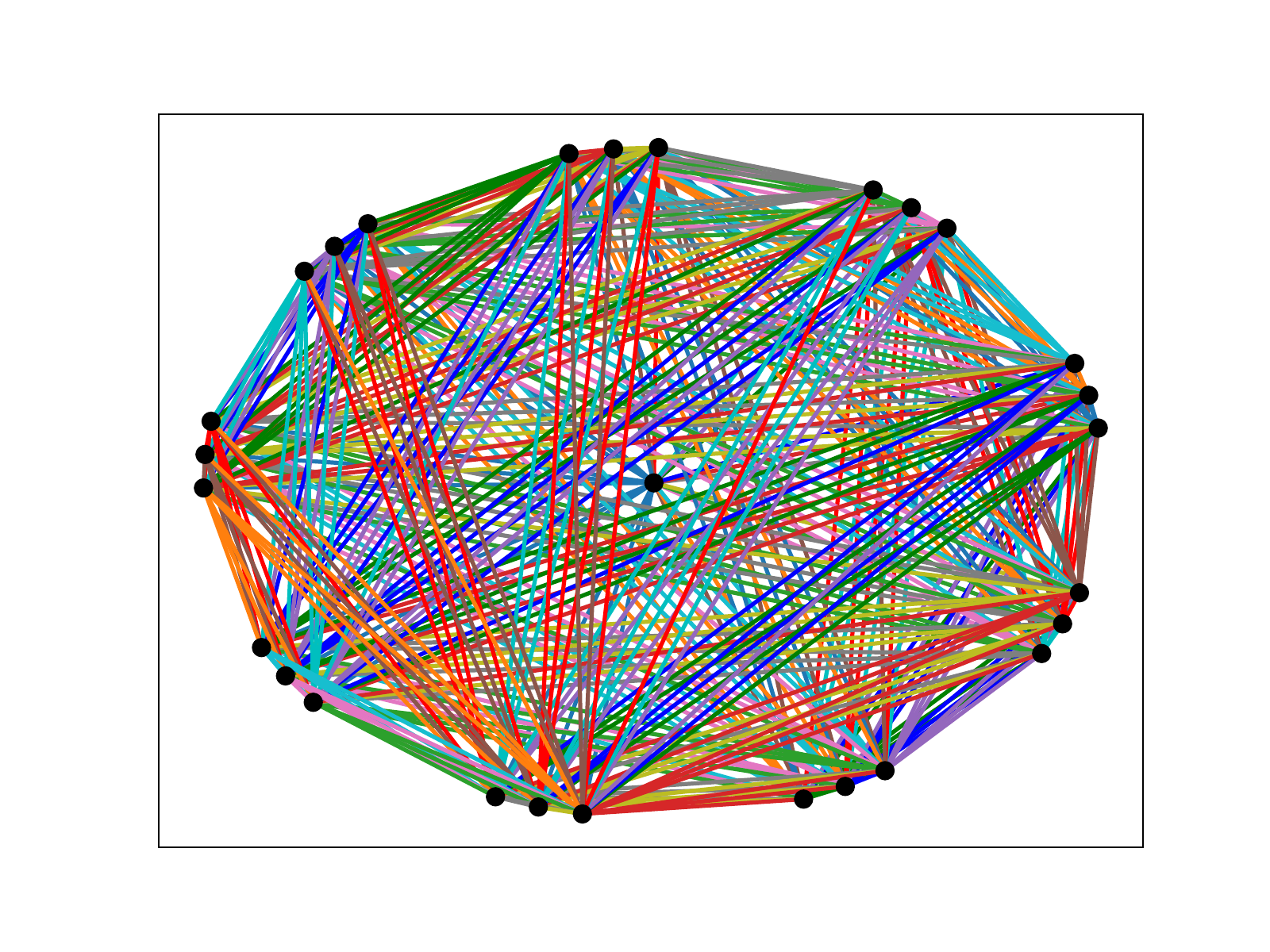}
\includegraphics[width=0.24\textwidth,page=2]{ILP_solutions/k3/three_base_cases/case2a}
\includegraphics[width=0.24\textwidth,page=3]{ILP_solutions/k3/three_base_cases/case2a} \\
\includegraphics[width=0.24\textwidth,page=4]{ILP_solutions/k3/three_base_cases/case2a}
\includegraphics[width=0.24\textwidth,page=5]{ILP_solutions/k3/three_base_cases/case2a}
\includegraphics[width=0.24\textwidth,page=6]{ILP_solutions/k3/three_base_cases/case2a}
\includegraphics[width=0.24\textwidth,page=7]{ILP_solutions/k3/three_base_cases/case2a} \\
\includegraphics[width=0.24\textwidth,page=8]{ILP_solutions/k3/three_base_cases/case2a}
\includegraphics[width=0.24\textwidth,page=9]{ILP_solutions/k3/three_base_cases/case2a}
\includegraphics[width=0.24\textwidth,page=10]{ILP_solutions/k3/three_base_cases/case2a}
\includegraphics[width=0.24\textwidth,page=11]{ILP_solutions/k3/three_base_cases/case2a} \\
\includegraphics[width=0.24\textwidth,page=12]{ILP_solutions/k3/three_base_cases/case2a}
\includegraphics[width=0.24\textwidth,page=13]{ILP_solutions/k3/three_base_cases/case2a}
\includegraphics[width=0.24\textwidth,page=14]{ILP_solutions/k3/three_base_cases/case2a}
\includegraphics[width=0.24\textwidth,page=15]{ILP_solutions/k3/three_base_cases/case2a}
\caption{Full partition of $BW_{9,3}$ into 14 plane spanning trees according to the construction in case 2a of \Cref{thm:bw_k3_classification} (generated by the computer assisted ILP).}
\label{fig:ilp_solutions_bw_93_case2a}
\end{figure}

\begin{figure}[htb]
\centering
\includegraphics[width=0.49\textwidth,page=1]{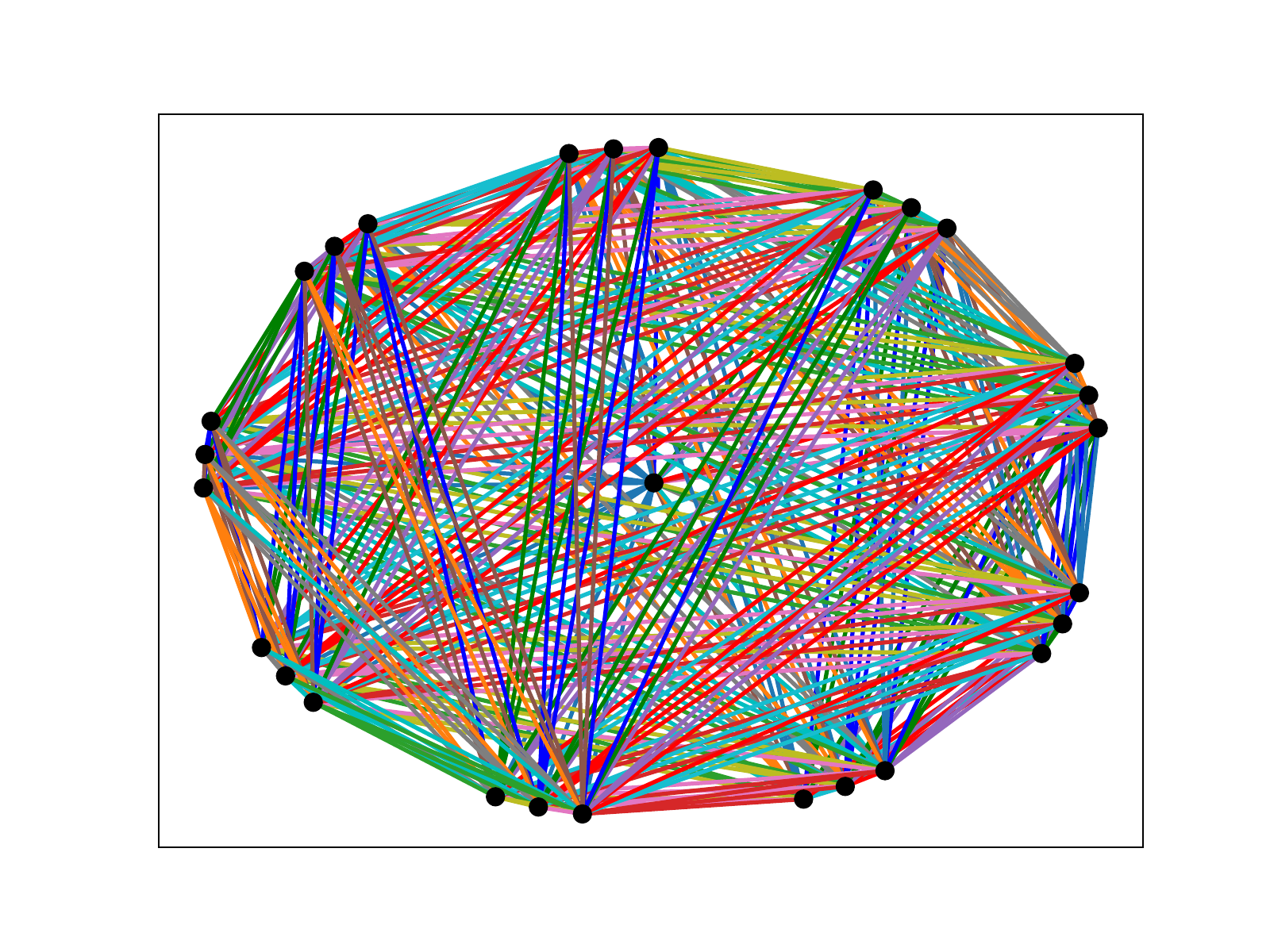}
\includegraphics[width=0.24\textwidth,page=2]{ILP_solutions/k3/three_base_cases/case2b}
\includegraphics[width=0.24\textwidth,page=3]{ILP_solutions/k3/three_base_cases/case2b} \\
\includegraphics[width=0.24\textwidth,page=4]{ILP_solutions/k3/three_base_cases/case2b}
\includegraphics[width=0.24\textwidth,page=5]{ILP_solutions/k3/three_base_cases/case2b}
\includegraphics[width=0.24\textwidth,page=6]{ILP_solutions/k3/three_base_cases/case2b}
\includegraphics[width=0.24\textwidth,page=7]{ILP_solutions/k3/three_base_cases/case2b} \\
\includegraphics[width=0.24\textwidth,page=8]{ILP_solutions/k3/three_base_cases/case2b}
\includegraphics[width=0.24\textwidth,page=9]{ILP_solutions/k3/three_base_cases/case2b}
\includegraphics[width=0.24\textwidth,page=10]{ILP_solutions/k3/three_base_cases/case2b}
\includegraphics[width=0.24\textwidth,page=11]{ILP_solutions/k3/three_base_cases/case2b} \\
\includegraphics[width=0.24\textwidth,page=12]{ILP_solutions/k3/three_base_cases/case2b}
\includegraphics[width=0.24\textwidth,page=13]{ILP_solutions/k3/three_base_cases/case2b}
\includegraphics[width=0.24\textwidth,page=14]{ILP_solutions/k3/three_base_cases/case2b}
\includegraphics[width=0.24\textwidth,page=15]{ILP_solutions/k3/three_base_cases/case2b}
\caption{Full partition of $BW_{9,3}$ into 14 plane spanning trees according to the construction in case 2b of \Cref{thm:bw_k3_classification} (generated by the computer assisted ILP).}
\label{fig:ilp_solutions_bw_93_case2b}
\end{figure}

\end{document}